\newtheorem{thm}{Theorem}[section]
\newtheorem{lem}[thm]{Lemma}
\newtheorem{conj}[thm]{Conjecture}
\newtheorem{assumption}[thm]{Assumption}
\newtheorem{pr}[thm]{Proposition}
\newtheorem{definition}[thm]{Definition}
\newtheorem{example}[thm]{Example}
\newenvironment{exmp}{\begin{example}\rm}{\end{example}}
\newtheorem{remark}[thm]{Remark}
\newenvironment{rem}{\begin{remark}\rm}{\end{remark}}
\newtheorem{algorithm}[thm]{Algorithm}
\newtheorem{observation}[thm]{Observation}
\newtheorem{claim}[thm]{Claim}
\newcommand{\vertiii}[1]{{\left\vert\kern-0.25ex\left\vert\kern-0.25ex\left\vert #1
    \right\vert\kern-0.25ex\right\vert\kern-0.25ex\right\vert}}
\title{Markov Capacity for Factor Codes with an Unambiguous Symbol}
\author{Guangyue Han\textsuperscript{1}, Brian Marcus\textsuperscript{2} and Chengyu Wu\textsuperscript{3}\\[2ex]
\textsuperscript{1}The University of Hong Kong, \textit{ghan@hku.hk}\\
\textsuperscript{2}The University of British Columbia, \textit{marcus@math.ubc.ca}\\
\textsuperscript{3}The University of British Columbia, \textit{wuchengyu0228@gmail.com}}
\date{{\normalsize \today}}
\begin{document}\maketitle\thispagestyle{empty}

\begin{abstract}
In this paper, we first give a necessary and sufficient condition for a factor code with an unambiguous symbol to admit a subshift of finite type restricted to which it is one-to-one and onto. We then give a necessary and sufficient condition for the standard factor code on a spoke graph to admit a subshift of finite type restricted to which it is finite-to-one and onto. We also conjecture that for such a code, the finite-to-one and onto property is equivalent to the existence of a stationary Markov chain that achieves the capacity of the corresponding deterministic channel.
\end{abstract}

\section{Introduction}
\label{intro}

Shifts of finite type (SFT), and more generally sofic shifts, are spaces of bi-infinite sequences that play a prominent role in symbolic dynamics. Of particular interest are factor codes (onto sliding block codes) from one such space to another, as they represent ways
of encoding blocks in the domain space into blocks in the range space.  However, typically such maps are badly many-to-one.  So, it would be
useful to know when one can restrict to a subspace of the domain such that the code
is still onto and one-to-one or finite-to-one.  Consider the following properties.
Given an irreducible SFT $X$, a sofic shift $Y$, and
a factor code, $\phi: X \rightarrow Y$,
\begin{itemize}
\item[P1:]
There exists an SFT $Z\subset X$ such that $\phi\vert_Z$ is a conjugacy
onto $Y$.
\item[P2:]
There exists an SFT $Z\subset X$ such that $\phi\vert_Z$ is finite-to-one and
onto $Y$.
\item[P3:]
There exists a stationary Markov measure $\nu$ on $X$ s.t.
$\phi^*(\nu) = \mu_0$,  the unique measure of maximal entropy (mme for short) on $Y$.
\end{itemize}
We are interested in finding checkable, necessary and sufficient conditions for each of these properties and in determining relationships among these properties.
Clearly, P1 implies P2 and P2 implies P3 because, given P2,
any mme $\nu$ on
$Z$ satisfies P3 (see Proposition~\ref{Fto1}).

A factor code $\phi:X \rightarrow Y$ can be viewed as an
input-constrained, deterministic, but typically lossy, channel in the
information theoretic sense: an input $x$ determines a channel output $y = \phi(x)$. Our interest in P3 stems from the fact that it is equivalent to the condition that the Markov capacity achieves the capacity of this channel, i.e., there is an input Markov measure on $X$ that achieves capacity (See Sections~\ref{InfoThy} and \ref{FactorChannel} for more details).

Since $Y$ is the image of an irreducible shift space, it must be irreducible, and it follows
that  $\mu_0$ is indeed unique and fully supported on $Y$. However,  we do not require $\nu$ to be fully
supported on $X$.

For P1,  there are certainly some necessary conditions; for instance if $Y$ has a fixed
point, then $X$ must have a fixed point, and $Y$ must be an SFT.

We consider the special class of factor codes with an unambiguous symbol. This means that the alphabet of $Y$ is $\{0,1\}$ and in the block code $\Phi$ that generates $\phi$,  there is exactly one block $u$ s.t. $\Phi(u) = 1$.  In Theorem~\ref{one-to-one_SFT},
we characterize, for this class, all such $\phi$ for which there exists a shift space $Z\subset X$ s.t.
$\phi\vert_Z$ is a conjugacy onto $Y$ and show that such a $Z$ must necessarily be an SFT, i.e., P1.
In Theorem~\ref{domain_full_one-to-one}
we give a refined version of this result when $X$
is the full 2-shift.

For P2, we recall from a counterexample [MPW84, pp. 287-289] that P2 is not always satisfied.
We consider a subclass of factor codes with an unambiguous symbol,
motivated by that counterexample, called standard factor codes on spoke graphs
(for the definition, see Section \ref{Sec_Spoke}). In Theorem~\ref{spoke_three_eq}, for this subclass,
we characterize all such $\phi$ satisfying P2, and we show that for any $\phi$ in this subclass, P2 is equivalent to the existence of an SFT $Z \subset X$, such that $\phi|_Z$ is almost invertible and onto $Y$.

The same counterexample in [MPW84, pp. 287-289] shows that for standard factor codes on spoke graphs, P3 is not always satisfied.

We conjecture that for standard factor codes on spoke graphs, P3 and P2 are equivalent, i.e., if
there exists a stationary Markov measure $\nu$ on $X$ s.t.
$\phi^*(\nu) = \mu_0$, then  there exists an SFT $Z\subset X$ such that $\phi\vert_Z$ is finite-to-one and
onto $Y$; if true, then for this class, the same characterization for P2  holds
for P3.  In Proposition~\ref{partialconverse}, we prove
this in several special cases.  The proof combines the Chinese Remainder Theorem and a dominance condition.

We note that P3 is related to the property that a factor code from an
irreducible SFT to an irreducible SFT is Markovian, although in that case one assumes
that such $\nu$ is fully supported [BT84], [BP11].

It was
shown in [MPW84, Proposition 3.2] that P2 always holds if we relax SFT $Z$ to sofic $Z$.
Similarly, it was
shown in [MPW84, Corollary 3.3] that if we relax stationary Markov $\nu$ to stationary hidden Markov $\nu$, then P3 always holds.

We point the reader to a related paper which considers factor codes $\phi:X \rightarrow Y$ as deterministic channels and
for a given factor code $\phi$, characterizes those subshifts, of entropy strictly less than that of $Y$, that can be faithfully encoded through $\phi$ [Mac22].

The remainder of this paper is organized as follows. In Section~\ref{SymDyn}, we give
brief background on symbolic dynamics, focusing on SFTs, sofic shifts and factor codes.
In Section~\ref{InfoThy}, we describe a motivating problem from information theory.
In Section~\ref{FactorChannel}
we describe factor codes as special channels in information theory (as was done in
[MPW84]).
We introduce in Section~\ref{unambig} the class of
factor codes with an unambiguous symbol and for this class
consider
P1 in Section \ref{P1_for_unam}.  In Section \ref{Sec_Spoke} we introduce the subclass of standard factor codes on spoke
graphs and
consider P2 for this subclass in Section \ref{P2_for_spoke}.  In Section~\ref{Conjecture}, we consider P3 for this subclass and prove Proposition~\ref{partialconverse}.
Finally, in Section \ref{two_cycles}, we discuss standard factor codes on
another class of graphs.

{\em Acknowledgements:}  We thank Mike Boyle, Felipe Garc\'ia-Ramos, Sophie MacDonald, Tom Meyerovitch, Ronny Roth, and Paul Siegel for helpful
discussions.

\section{Notation and Brief Background from Symbolic Dynamics}
\label{SymDyn}


We introduce in this section some basic terms and facts in symbolic dynamics. For more details, see [LM95].

Let $\mathcal{A}$ be a finite alphabet. The {\em full $\mathcal{A}$-shift}, denoted by $X_{\mathcal{A}}$, is the collection of all bi-infinite sequences over $\mathcal{A}$. When $\mathcal{A}=\{0,1, \cdots, n-1\}$, the full shift is called the {\em full $n$-shift} and will be denoted by $X_{[n]}$. For any point $x=\cdots x_{-1} x_0 x_1 \cdots\in X_{\mathcal{A}}$, we use $x_i$ to denote the $i$-th coordinate of $x$ and $x_{[i,j]}$ to denote the block $x_ix_{i+1} \cdots x_j$.
For a block $x_1\cdots x_m$, we use $(x_1 \cdots x_m)^k$ to denote its $k$-concatenation
and $(x_1 \cdots x_m)^\infty$ to mean its infinite concatenation. The shift map $\sigma$ on $X_{\mathcal{A}}$ is defined by $(\sigma(x))_i=x_{i+1}$ for any $x\in X_{\mathcal{A}}$. A subset of $X_{\mathcal{A}}$ is a {\em shift space} if it is compact and is invariant under $\sigma$. For any positive integer $m$, we use $\mathcal{B}_m(X)$ to denote the set of all allowed blocks of length $m$ in a shift space $X$, and $\mathcal{B}(X) := \cup_{n} \mathcal{B}_n (X)$ is called the {\em language} of $X$. The {\em $N$-th higher block shift} of $X$ is the image $\beta_N(X)$ in the full shift over $\mathcal{A}^N$ where $\beta_N: X \rightarrow(\mathcal{A}^N)^{\mathbb{Z}}$ is defined by $(\beta_N(x))_i= x_{[i,i+N-1]}$ for any $x\in X$. A shift space $X$ is {\em irreducible} if for any $u, v\in \mathcal{B}(X)$, there is a $w\in \mathcal{B}(X)$ such that $uwv\in \mathcal{B}(X)$.

Let $\mathcal{A}_1$, $\mathcal{A}_2$ be two alphabets, $s,t$ be two fixed integers and let $X$ be a shift space over $\mathcal{A}_1$.
The map $\phi: X \rightarrow {\mathcal{A}_2}^{\mathbb{Z}}$ defined by $\phi(x)_i=\phi(x_{[i-s, i+t]})$ for any $i$ is called a {\em sliding block code with anticipation $t$ and memory $s$}. A sliding block code $\phi: X \rightarrow Y$ is {\em finite-to-one} if there is an integer $M$ such that $\vert\phi^{-1}(y) \vert\leq M$ for every $y\in Y$, and it is {\em one-to-one} when $M=1$. Moreover, the sliding block code $\phi: X\rightarrow Y$ is a {\em factor code} if it is onto, in which case $Y$ will be called the {\em factor of $X$}, and $\phi$ is a {\em conjugacy} if it is one-to-one and onto.

A {\em point diamond} for $\phi$ is a pair of distinct points in $X$ that differ in finitely many coordinates and have the same image under $\phi$. If $X$ is irreducible, then $\phi$ is finite-to-one iff it has no point diamonds [LM95, Theorem 8.1.16].

Let $G$ be a directed graph with no multiple edges. For a path $\gamma$ in $G$, $V(\gamma)$ denotes the sequence of vertices of $\gamma$ and $\vert \gamma \vert$ is the length, i.e., the number of edges, of $\gamma$ (for example, for $\gamma=e_1 e_2 \cdots e_n$, $V(\gamma)=I(e_1) I(e_2) \cdots I(e_n) T(e_n)$ and $\vert \gamma\vert=n$, where for any $i$, $I(e_i)$ and $T(e_i)$ denote the initial vertex and the terminal vertex of $e_i$, respectively). We use $\mathcal{V}(G)$ to denote the vertex set of $G$ and $\widehat{X_G}$ to denote the {\em vertex shift induced by $G$}. That is, the shift space whose points are sequences of vertices of bi-infinite paths in $G$. Let $\Phi: \mathcal{V}(G) \rightarrow \mathcal{A}$ be a labelling of vertices of $G$ over a finite alphabet $\mathcal{A}$. A {\em graph diamond of $\Phi$} is a pair of distinct paths in $G$ that have the same initial vertex, terminal vertex and label.  It is well-known that, assuming $G$ is irreducible, the factor code generated by $\Phi$ is finite-to-one iff $\Phi$ has no graph diamonds [LM95, Section 8.1].

A shift space $X$ can be expressed as $X = X_{\mathcal{F}}$ where $\mathcal{F}$ is a {\em forbidden set}, a list of forbidden words such that $x \in X$ iff $x$ contains no element of $\mathcal{F}$. The choice of the forbidden set of $X$ is in general not unique. When $X=X_\mathcal{F}$ for some finite set $\mathcal{F}$, $X$ is called a {\em shift of finite type} (SFT). An SFT $X$ is called $M$-step (or has {\em memory} $M$) if $X=X_{\mathcal{F}}$ for a collection $\mathcal{F}$ of $(M+1)$-blocks. A vertex shift is always a $1$-step SFT and conversely, by lifting to its $(M+1)$-th higher block shift, an $M$-step SFT can always be represented as the vertex shift of a graph. A shift space $Y$ is {\em sofic} if there exist an SFT $X$ and a sliding block code $\phi$ such that $\phi(X)=Y$. Clearly, SFTs must be sofic.

There is a general definition of the degree of a factor code on any subshift, see [LM95, Definition 9.1.2.]. For our purposes, we focus only on the following equivalent definition of the degree of a 1-block finite-to-one factor code $\phi: X\rightarrow Y$ where $X$ is an irreducible $M$-step SFT $X$: Let $N:=\max\{1, M\}$. The {\em degree} of $\phi$ is defined as the minimum over all blocks $w=w_1 w_2 \cdots w_{\vert w \vert}$ in $Y$ and all $1\leq i\leq \vert w\vert-N+1$ the number of distinct $N$-blocks in $X$ that we see beginning at coordinate $i$ among all the pre-images of $w$ [LM95, Proposition 9.1.12]. A word $w$ that achieves the minimum above with some coordinate $i$ is called a {\em magic word}, and
 the subblock $w_{i}w_{i+1}\cdots w_{i+N-1}$ is called the corresponding {\em magic block}.
 
A factor code $\phi$ is {\em almost invertible} if its degree is $1$. While an almost invertible code need not be finite-to-one, on an irreducible SFT it must be finite-to-one [LM95, Proposition 9.2.2].

The {\em topological entropy} of a shift space $X$ is
$$h_{top} (X) := \lim_{m\rightarrow \infty} \frac{1}{m} \log \vert \mathcal{B}_m (X) \vert.$$
For a probability measure $\mu$ on $X$, let $h(\mu)$ denote its measure theoretic entropy. By the variational principle [Wal82, Theorem 8.6],
\begin{align} \label{variational}
h_{top} (X) = \sup_\mu \{h(\mu): \mu \mbox{ is a shift-invariant Borel probability measure on $X$} \}.
\end{align}
The {\em measure of maximal entropy (mme)} $\mu_0$ of $X$ is a probability measure on $X$ such that the supremum in (\ref{variational}) is achieved.

Given $S\subset \mathbb{Z}_{\geq 0}$, an {\em $S$-gap shift} $X(S)$ is a subshift of $X_{[2]}$ such that any $x\in X(S)$ is a concatenation of blocks of the form $0^s1$ with $s\in S$, where points with infinitely many $0$'s to both sides are allowed when $S$ is infinite.
Let $\lambda$ be the unique positive solution to $\sum_{m\in S} x^{-m-1}=1$. Then $h_{top}(X(S))=\log \lambda$ [DJ12], and the mme $\mu_0$ of $X(S)$ is determined by
$$
\frac{\mu_0(X_{0} X_{1} \cdots X_{i+1} = 10^i 1)}{\mu_0(X_{0}=1)}= \lambda^{-i-1} \quad \mbox{ for any $i\in S$}
$$
and
$$
\mu_0 (X_1 \cdots X_n=x_1 \cdots x_n \vert X_{-m} \cdots X_{-1} X_0 =x_{-m} \cdots x_{-1}1) = \mu_0 (X_1 X_2 \cdots  X_n=x_1\cdots x_n \vert X_0=1)
$$
for any $m,n$ and any allowed block $x_{-m}\cdots x_{-1}1x_1\cdots x_n$ [GP19, Corollary 3.9].

It has been proven in [DJ12] that $X(S)$ is an SFT if and only if $S$ is finite or cofinite. Indeed, the forbidden set of $X(S)$ is
\begin{align} \label{standard_forbidden_S_gap}
\mathcal{F}=
\begin{cases}
\{10^m1: m\in \{0,1,2,\cdots, \max{S}\} \setminus S\}\cup \{0^{1+\max{S}}\} \qquad &\mbox{ when $S$ is finite}
\\
\\
\{10^m1: m\in \mathbb{Z}_{\geq 0} \setminus S\} \qquad &\mbox{ when $S$ is cofinite,}
\end{cases}
\end{align}
which will be called the {\em standard forbidden set} of $X(S)$ in this paper.


\section{A Problem in Information Theory}
\label{InfoThy}

A central object in information theory is a discrete channel.  Here, there is a space of input sequences $X$, a space of output sequences $Y$, each over a finite alphabet, and for each $x \in X$ a probability measure $\lambda_x$ on $Y$ which gives the distribution of outputs, given that $x$ was transmitted. One assumes that the map $x \mapsto \lambda_x$ is at least measurable and the channel is stationary in the sense that $\lambda_{\sigma x} = \sigma\lambda_x$ where $\sigma$ is the left shift.

Typically,  $X$ and $Y$ are full shifts and in the simplest case, that of a discrete memoryless channel, $\lambda_x(y_1 \ldots y_n) = \Pi_{i=1}^n p(y_i|x_i)$; here, for each element $a$ of the alphabet of $X$, $p(\cdot|a)$ is a probability distribution on the alphabet of $Y$;
the channel is memoryless in the sense that conditioned on the input $x_i$, the output $y_i$
is independent of all other inputs.
For example, the {\em binary symmetric channel} (BSC)
is the memoryless channel where
$X$ and $Y$ are the full 2-shift and
$$
p(b|a) = \left\{\begin{array}{cc} \epsilon & \qquad b \ne a  \\ 1- \epsilon &\qquad  b = a.
\end{array}\right.
$$
Here, $\epsilon$ is a parameter, known as the crossover probability.

Given a stationary (i.e., shift invariant) input measure $\nu$ on $X$, one defines the stationary output measure $\mu$ on $Y$ by $\mu = \int \lambda_x d\nu$. The {\em mutual information} of $\mu$ and $\nu$ is defined as
$$
I(\mu,\nu) = h(\mu) - h(\mu | \nu) = h(\nu) - h(\nu | \mu)
$$
where $h(\cdot)$ denotes entropy, and $h(\cdot | \cdot)$ denotes conditional entropy
(the second equality follows from the chain rule for entropy, which is a fundamental equality in information theory);
in information theory, shift-invariant measures are viewed as stationary processes,  and these entropies
are often referred to as entropy rates.

There are several notions of channel capacity, which all agree under relatively
mild assumptions.  The {\em stationary capacity} (capacity for short) of a discrete noisy channel is defined
$$
Cap = \sup_{ \mbox{ stationary $\nu$}} I(\mu,\nu).
$$
Note that this makes sense since the output measure $\mu$ is a function of the input measure $\nu$ and the channel.

For a discrete memoryless channel, the capacity can be computed effectively because
it agrees with the $\sup$ when restricted only to i.i.d. (i.e., stationary Bernoulli) measures, turning it into a finite dimensional optimization problem, and, while there
is no known closed form expression for capacity in general, the optimum can be effectively approximated  by the well-known Blahut-Arimoto algorithm [Bla72, Ari72].

We define
the {\em $k$-th order Markov capacity}
$$
Cap_k = \sup_{ \mbox{ stationary $k$-th order Markov $\nu$} } I(\mu,\nu).
$$
We are interested in the problem: {\em when does Markov capacity achieve capacity,
i.e., when does $Cap_k = Cap$ for some $k$}?

It is known, using the ergodic decomposition, that under mild assumptions,
$Cap$ ({\em resp.}, $Cap_k$)
coincides with the maximum mutual information over all
     stationary, {\em ergodic} input measures ({\em resp.}, stationary, {\em irreducible}, $k$-th order Markov input measures) [Gra11, Fei59].

Again, with mild assumptions on the channel, one shows that
$\lim_{k \rightarrow \infty} Cap_k =  Cap$ [CS08]; informally,
``Markov capacity {\em asymptotically} achieves capacity.''
This is important because for fixed $k$, computation of $Cap_k$ is a finite dimensional
optimization problem. According to the discussion above, for discrete memoryless
channels, $Cap_0 = Cap$; informally, ``Bernoulli capacity  achieves capacity.''
But for channels with memory, even just one step of memory, except in certain cases such as input-constrained noiseless channels below, it
is believed that $Cap_k \ne Cap$ for all $k$.  However, little along
these lines has been proven.

If $X$ is not a full shift, then the channel is called {\em input-constrained}.
Typically, the input constraint $X$ is an SFT or sofic shift.  Such a shift space
can be considered a noiseless channel in itself, in a trivial way: $Y  = X$ and for each
$x \in X$, $\nu_x = \delta_x$, the point mass on $\{x\}$.  The capacity of this channel
is easily seen to be the topological entropy, $h_{top}(X)$, otherwise known as
the noiseless capacity, which can be easily computed.

Now, consider the input-constrained binary symmetric channel. This is the BSC, where the inputs are required to belong to a given SFT or
sofic shift $X$ over $\{0,1\}$.  While the capacity of the BSC and the noiseless capacity of $X$ are known explicitly, the capacity of the $X$-constrained BSC
is not known.  And while Markov capacity asymptotically achieves capacity of this channel, it is believed that Markov capacity does not achieve capacity, i.e, for all
$k$, $Cap_k \ne Cap$. However, this has not been proven.

\section{Factor Codes as Channels}
\label{FactorChannel}

This brings us to a main point of our paper: for a class of channels, albeit rather simple in practice, we can rigorously decide whether or not Markov capacity achieves capacity.
An example of this was given in [MPW84, pp. 287-289].
Specifically, we view a factor code $\phi: X \rightarrow Y$ as an {\em input-constrained,
deterministic channel}; here, $\nu_x  = \delta_{\phi(x)}$, so the input determines the
output uniquely. Intuitively, for this channel
input sequences are distorted in a deterministic way.
It follows that, in this case, for any invariant input measure $\nu$, $h(\mu|\nu) =  h(\phi^*(\nu)|\nu) = 0$ where $\phi^*$ is the induced map (of $\phi$) on stationary measures on $X$. So
$$
Cap = \sup_{ \mbox{ stationary $\nu$}} h(\phi^*(\nu)).
$$
According to [MPW84, Corollary 3.2], there exists a stationary input
measure $\nu$ (in fact, a stationary hidden Markov input measure)
such that $\phi^*(\nu) = \mu_0$, the unique mme
on $Y$. Thus, by the variational principle [Wal82, Theorem 8.6], $Cap = h_{top}(Y)$
(an alternative to this argument is to show
that the map $\nu \mapsto \phi^*(\nu)$ is onto the set of all stationary measures on $Y$:
given stationary $\mu$ on $Y$, use the Hahn-Banach theorem to find a not-necessarily-stationary $\nu'$ on $X$ s.t. $\phi^*(\nu') = \mu$  and let $\nu$ be any weak limit point of
the sequence $(1/n)(\nu' +\sigma\nu' + \ldots + \sigma^{n-1}\nu')$).

In summary, we have:

\begin{pr}
\label{Summary}
Let $\phi:X \rightarrow Y$ be a factor code from an irreducible SFT
$X$ to a sofic shift $Y$. Let $\mu_0$ be the unique measure of maximal entropy on $Y$.  For the input-constrained, deterministic channel defined
by $\phi$,
\begin{enumerate}
\item[(1)]
$Cap$ (resp. $Cap_k$) coincides with the {\em maximum} mutual information over all stationary, {\em ergodic} input measures (resp., stationary, {\em irreducible}, $k$-th order Markov input measures);
\item[(2)]
$\lim_{k \rightarrow \infty} Cap_k =  Cap;$
\item[(3)]
$Cap= h_{top}(Y);$
\item[(4)]
A stationary measure $\nu$ on $X$ achieves $Cap$ iff $\phi^*(\nu) = \mu_0$
iff $h(\phi^*(\nu)) = h_{top}(Y).$
\end{enumerate}
\end{pr}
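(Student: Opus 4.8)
\medskip
\noindent\textbf{Proof proposal.}
The plan is to reduce all four items to the single structural feature of a deterministic channel: the output is a function of the input, so that for every stationary $\nu$ on $X$ the conditional entropy $h(\phi^*(\nu)\mid\nu)$ vanishes and hence $I(\phi^*(\nu),\nu)=h(\phi^*(\nu))$. Consequently $Cap=\sup_{\nu}h(\phi^*(\nu))$ over stationary $\nu$, and likewise $Cap_k=\sup h(\phi^*(\nu))$ over stationary $k$-th order Markov $\nu$. Once this is recorded, everything becomes a statement about the push-forward map $\phi^*$ on stationary measures and about entropy. I would dispose of (3) and (4) first, then (1), then (2).

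For (3), the key step is to show that $\phi^*$ maps the stationary measures on $X$ \emph{onto} the stationary measures on $Y$. Given stationary $\mu$ on $Y$, the positive norm-one functional $f\circ\phi\mapsto\int f\,d\mu$ on the subspace $\{f\circ\phi:f\in C(Y)\}\subset C(X)$ extends, by Hahn--Banach and Riesz, to a Borel probability measure $\nu'$ on $X$ with $\phi^*(\nu')=\mu$; taking a weak$^*$ limit point $\nu$ of $\frac1n\sum_{i=0}^{n-1}\sigma^i\nu'$ and using $\phi\circ\sigma=\sigma\circ\phi$ together with stationarity of $\mu$ gives a stationary $\nu$ with $\phi^*(\nu)=\mu$ (alternatively one may simply quote [MPW84, Corollary 3.2], which even produces a stationary hidden Markov preimage). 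Applying this to $\mu=\mu_0$ yields $Cap\geq h(\mu_0)=h_{top}(Y)$, while the variational principle [Wal82, Theorem 8.6] gives $h(\phi^*(\nu))\leq h_{top}(Y)$ for every stationary $\nu$; hence $Cap=h_{top}(Y)$ and the supremum is attained. For (4): since $Cap=h_{top}(Y)$ and $I(\phi^*(\nu),\nu)=h(\phi^*(\nu))$, a stationary $\nu$ achieves $Cap$ iff $h(\phi^*(\nu))=h_{top}(Y)$; and because $Y$ is irreducible sofic its measure of maximal entropy is unique, so the stationary measure $\phi^*(\nu)$ on $Y$ attains entropy $h_{top}(Y)$ exactly when it equals $\mu_0$ — which closes the chain of equivalences.

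For the ``ergodic/irreducible'' refinement in (1), I would invoke the ergodic decomposition of entropy [Gra11, Fei59]: writing $\nu=\int\nu_\omega\,dP(\omega)$ one has $\phi^*(\nu)=\int\phi^*(\nu_\omega)\,dP(\omega)$ with each $\phi^*(\nu_\omega)$ ergodic and $h(\phi^*(\nu))=\int h(\phi^*(\nu_\omega))\,dP(\omega)$, so $\sup_\nu h(\phi^*(\nu))$ equals the supremum over ergodic $\nu$; moreover any stationary preimage of $\mu_0$ has $h(\phi^*(\nu_\omega))=h_{top}(Y)$ for $P$-a.e. $\omega$, whence $\phi^*(\nu_\omega)=\mu_0$ by uniqueness of the mme, so the supremum is a maximum attained at an ergodic measure. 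The same argument, run inside the finite-dimensional compact simplex of stationary $k$-th order Markov measures — on which $\nu\mapsto h(\phi^*(\nu))$ is upper semicontinuous, so the supremum is attained — together with the fact that such a measure decomposes into irreducible $k$-th order Markov measures, gives the $Cap_k$ statement.

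Finally, (2) says $Cap_k\uparrow Cap$. The sequence $Cap_k$ is non-decreasing (a $k$-Markov measure is $(k{+}1)$-Markov) and bounded above by $Cap=h_{top}(Y)$ from (3), so it converges; the content is the reverse inequality $\lim_k Cap_k\geq h_{top}(Y)$, which is exactly the conclusion of the general convergence result [CS08] once its hypotheses are checked for our (trivially regular) deterministic channel. I expect this to be the one step that resists a short self-contained argument: a direct construction would Markovize an ergodic $\nu$ with $\phi^*(\nu)=\mu_0$ and try to show $h(\phi^*(\nu_k))\to h_{top}(Y)$, but since entropy rate is only upper semicontinuous, bounding $h(\phi^*(\nu_k))$ from below is delicate — which is why I would cite [CS08] here rather than reprove it.
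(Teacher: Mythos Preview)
Your proposal is correct and takes essentially the same approach as the paper: the paper does not give a formal proof block for this proposition but rather collects it as a summary of the preceding discussion, which invokes exactly the ingredients you use --- the vanishing of $h(\phi^*(\nu)\mid\nu)$ for a deterministic channel, [MPW84, Corollary 3.2] (with the Hahn--Banach alternative explicitly noted) plus the variational principle for (3), uniqueness of the mme for (4), the ergodic decomposition via [Gra11, Fei59] for (1), and the citation of [CS08] for (2). Your write-up is more detailed and better organized than the paper's informal treatment, but the content and references coincide.
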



The following simple result gives a relation between P2 and P3.

\begin{pr}~\label{Fto1}
With the same assumptions as in Proposition~\ref{Summary}, if there is an SFT $Z\subset X$ such that $\phi\vert_Z$ is finite-to-one and onto $Y$, then there is an irreducible stationary Markov measure $\nu$ on $Z$ of order at most the memory of $Z$ such that $\phi^*(\nu)=\mu_0$.
\end{pr}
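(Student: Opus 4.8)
The plan is to produce $\nu$ as the measure of maximal entropy of a well-chosen irreducible sub-SFT of $Z$, using repeatedly that a finite-to-one factor code preserves entropy, both topological and measure-theoretic. First I would recode: let $m$ be the memory of $Z$ and fix a conjugacy $c\colon Z\to\widehat{X_G}$ onto the vertex shift of a graph $G$, obtained by passing to a higher block shift of $Z$, so that each vertex of $G$ is a block of $Z$ of some fixed length $\le m+1$. Put $\psi:=\phi\vert_Z\circ c^{-1}\colon\widehat{X_G}\to Y$; this is a finite-to-one factor code onto $Y$, since $\phi\vert_Z$ is and $c$ is a conjugacy. Because a finite-to-one factor code preserves topological entropy [LM95], we get $h_{top}(\widehat{X_G})=h_{top}(Z)=h_{top}(Y)$.

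Next I would choose an irreducible component of $G$ that is ``large enough''. Let $C_1,\dots,C_r$ be the irreducible (strongly connected) components of $G$, with associated irreducible $1$-step SFTs $\widehat{X_{C_i}}$. Since the topological entropy of an SFT equals the maximum of the entropies of its irreducible components, there is an $i_0$ with $h_{top}(\widehat{X_{C_{i_0}}})=h_{top}(\widehat{X_G})=h_{top}(Y)$. Set $W:=c^{-1}(\widehat{X_{C_{i_0}}})\subseteq Z$. Whether a point $x$ lies in $W$ is determined by the finitely many block constraints defining $Z$ together with the requirement that each window $c(x)_j$ be a vertex of $C_{i_0}$; both are constraints on blocks of length at most $m+1$, so $W$ is an SFT of memory at most $m$. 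Moreover $W$ is irreducible, being conjugate via $c$ to the irreducible SFT $\widehat{X_{C_{i_0}}}$, and $h_{top}(W)=h_{top}(\widehat{X_{C_{i_0}}})=h_{top}(Y)$.

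Now let $\nu$ be the measure of maximal entropy of $W$. Being the Parry measure of an irreducible SFT of memory at most $m$, it is an irreducible stationary Markov measure of order at most $m$, hence of order at most the memory of $Z$. The restriction $\phi\vert_W$ is finite-to-one, being a restriction of $\phi\vert_Z$, with uniformly bounded finite fibers, so it preserves the entropy of every shift-invariant measure (the fiber over each point being finite, the fibered extension contributes no relative entropy); therefore $h(\phi^*\nu)=h(\nu)=h_{top}(W)=h_{top}(Y)$. Since $\phi^*\nu$ is a shift-invariant measure on $Y$ whose entropy attains the supremum in the variational principle, it is a measure of maximal entropy on $Y$, and by uniqueness of $\mu_0$ we conclude $\phi^*\nu=\mu_0$. (A posteriori $\phi\vert_W$ is onto $Y$, since $\mu_0$ is fully supported on $Y$.)

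The only genuinely non-routine point is the choice of $W$: $Z$ need not be irreducible, so it need not carry a Markov measure of maximal entropy at all, and an arbitrarily chosen irreducible component may see too little of $Y$; the entropy bookkeeping — finite-to-one codes preserve entropy, and the entropy of an SFT is attained on one of its irreducible components — is precisely what forces some component $W$ to satisfy $h_{top}(W)=h_{top}(Y)$, after which its Parry measure automatically pushes forward to $\mu_0$. A secondary, routine point is keeping track of the Markov order through the higher-block recoding so that it comes out at most $m$, and recalling that the Parry measure of an irreducible SFT is ergodic and fully supported, which is what ``irreducible'' refers to here. If one prefers to bypass the structure of the Parry measure, an alternative is to start from any stationary $\nu_0$ on $Z$ with $\phi^*\nu_0=\mu_0$ (available since $\phi\vert_Z$ is onto, e.g.\ by the Hahn--Banach argument of Section~\ref{FactorChannel}), pass to an ergodic component $\nu_\omega$ of $\nu_0$ maximizing $h(\phi^*\nu_\omega)$ — necessarily $h_{top}(Y)$, by affinity of entropy under $\phi^*$ and uniqueness of the mme — and use that an ergodic measure on an SFT is supported on a single irreducible component, which the same entropy squeeze then identifies as the desired $W$ with $\nu_\omega$ its Parry measure.
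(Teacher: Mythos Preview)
Your proof is correct and follows essentially the same approach as the paper: pick an irreducible component of $Z$ of maximal topological entropy, take its Parry measure $\nu$, and use that finite-to-one codes preserve entropy together with uniqueness of $\mu_0$ to conclude $\phi^*(\nu)=\mu_0$. The paper's version is a four-line sketch that omits the recoding and the explicit check on the Markov order; your more careful tracking of the memory through the higher-block conjugacy and your observation that $\phi\vert_W$ inherits finite-to-one from $\phi\vert_Z$ are welcome but not new ideas.
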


\begin{proof}
Let $\nu$ be the unique mme of any irreducible component of $Z$ with maximum topological entropy. It is stationary, irreducible, and Markov. Since $\phi\vert_Z$ is finite-to-one and onto $Y$,
$$
h(\phi^*(\nu))=h(\nu) =h_{top} (supp~ \nu) =h_{top}(Z) = h_{top}(Y).
$$
Since $\mu_0$ is the unique mme on $Y$, we have $\phi^*(\nu)=\mu_0$.
\end{proof}




\begin{pr}
\label{Equiv_nu}
  Let $\phi:X \rightarrow Y$ be a factor code from an irreducible SFT $X$
to a sofic shift $Y$.
Let $\nu$ be an irreducible stationary Markov measure on $X$ and assume that
$\phi^*(\nu) = \mu_0$,
the unique mme on $Y$ (in particular, Markov capacity achieves capacity
of the
input-constrained deterministic channel determined by $\phi$).

The following are equivalent:

\begin{enumerate}
\item[(1)]
$\phi|_{supp(\nu)}$ is finite-to-one and onto;
\item[(2)]
$h_{top}(supp(\nu))  =  h_{top}(Y)$;
\item[(3)]
$h(\nu) =   h_{top}(Y)$;
\item[(4)]
For every periodic point in $supp(\nu)$ the weight per symbol, for $\nu$, is
$e^{-h_{top}(Y)}$ (the {weight per symbol} of
a periodic point $(p_0 \ldots p_{n-1})^\infty$ for a $k$-th order Markov measure $\nu$ on $X$ is defined to be $\nu(p_0 \ldots p_{n-1} | p_{-k} \ldots p_{-1})^{1/n}$).
\end{enumerate}
\end{pr}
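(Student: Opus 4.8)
Write $W:=supp(\nu)$. The strategy is to prove the cyclic chain of implications $(1)\Rightarrow(2)\Rightarrow(3)\Rightarrow(4)\Rightarrow(1)$. After a conjugacy I may assume $\phi$ is a $1$-block code and $W$ is a vertex shift; since $\nu$ is irreducible Markov, the underlying graph $G$ is strongly connected, $W$ is an irreducible SFT, and $\nu$ is a first-order Markov chain on $W$ with positive transition matrix $P$ (meaning $P_{uv}>0$ exactly on the edges of $G$) and stationary vector $\pi$. Two preliminary observations will be used throughout. First, $\phi|_W$ is onto $Y$: since $supp(\phi^*\nu)\subseteq\phi(W)$ and $\mu_0$ is fully supported (as $Y$ is irreducible sofic), $Y=supp(\mu_0)\subseteq\phi(W)\subseteq Y$. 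Second, $h_{top}(Y)=h(\mu_0)=h(\phi^*\nu)\le h(\nu)\le h_{top}(W)$, while $h_{top}(W)\ge h_{top}(Y)$ because $\phi|_W$ is a factor code onto $Y$.

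The two easy arrows: $(1)\Rightarrow(2)$ holds because a finite-to-one factor code between irreducible sofic shifts preserves topological entropy [LM95]; and $(2)\Rightarrow(3)$ then follows by squeezing $h(\nu)$ between $h_{top}(Y)$ and $h_{top}(W)=h_{top}(Y)$ in the chain above. For $(4)\Rightarrow(1)$ I would argue with a cocycle: $(4)$ says every cycle of $G$ has the same product of transition probabilities $c^{\ell}$ along its $\ell$ edges, with $c:=e^{-h_{top}(Y)}$; the vanishing of $\sum(-\log P_{uv})-\ell\log c$ around every cycle then gives $-\log P_{uv}-\log c=\eta(u)-\eta(v)$ on edges for a function $\eta$ on $\mathcal{V}(G)$, so with $\xi_u:=e^{-\eta(u)}$ one gets $P_{uv}=c\,\xi_v/\xi_u$, whence stochasticity of $P$ forces $A\xi=c^{-1}\xi$ for the adjacency matrix $A$ of $G$. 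By Perron--Frobenius $c^{-1}$ is the Perron eigenvalue of $A$, i.e.\ $e^{h_{top}(W)}$; hence $h_{top}(W)=h_{top}(Y)$, and since $\phi|_W$ is a factor code from an irreducible SFT onto $Y$ of equal entropy, it is finite-to-one [LM95].

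The heart of the matter is $(3)\Rightarrow(4)$, which I would treat by ergodic optimization of the locally constant potential $f(x):=-\log P_{x_0x_1}$ on $W$. The first ingredient is an amplification argument: for a periodic point $p=(p_0\cdots p_{n-1})^\infty$ of $W$ of weight per symbol $t$, iterating its cycle $m$ times produces a block of $W$ of $\nu$-measure $\pi_{p_0}\,t^{mn}$, whose $\phi$-image is a block of $Y$ of $\mu_0$-measure at most $C\lambda^{-mn-1}$, where $\lambda:=e^{h_{top}(Y)}$ and $\mu_0([u])\le C\lambda^{-|u|}$ is the Gibbs upper bound for the mme of a sofic shift (which I would derive by pushing forward the Parry measure of the minimal right-resolving presentation of $Y$ and bounding the number of preimage words). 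Letting $m\to\infty$ gives $t\le\lambda^{-1}$, so every periodic orbit of $W$ has $f$-average at least $h_{top}(Y)$, and therefore $\int f\,d\mu\ge h_{top}(Y)$ for every shift-invariant $\mu$ on $W$ (periodic orbit measures being weak-$\ast$ dense among invariant measures, $f$ being continuous). The second ingredient: because $\nu$ is Markov, $\int f\,d\nu=h(\nu)$, and a telescoping computation shows that $\nu$ is an equilibrium state for $-f$ and that the topological pressure of $-f$ on $W$ vanishes. Assuming $(3)$ we get $\int f\,d\nu=h(\nu)=h_{top}(Y)=\min_\mu\int f\,d\mu$, so $\nu$ is a fully supported minimizing measure for $f$. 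By the Ma\~n\'e lemma (the subaction theorem of ergodic optimization), there is a continuous $u$ with $f-u+u\circ\sigma\ge h_{top}(Y)$ everywhere, and every minimizing measure is supported where equality holds; since $supp(\nu)=W$, equality holds on all of $W$, i.e.\ $f$ is cohomologous to the constant $h_{top}(Y)$. Consequently the $f$-average over every periodic orbit equals $h_{top}(Y)$, that is, every periodic point has weight per symbol $e^{-h_{top}(Y)}$, which is $(4)$ (and, as a byproduct, since $f$ and that constant differ by a coboundary and the pressure of $-f$ is $0$, one recovers $h_{top}(W)=h_{top}(Y)$).

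The main obstacle is exactly $(3)\Rightarrow(4)$: upgrading the purely entropic hypothesis $h(\nu)=h_{top}(Y)$ --- equivalently, that $\nu$ has zero relative entropy over $\mu_0$ --- to a rigid identity valid at \emph{every} periodic point. The two technical points to handle carefully are the Gibbs upper bound for the mme on a merely sofic $Y$, and the applicability of the subaction/rigidity machinery on an irreducible --- not necessarily mixing --- SFT for the locally constant potential $f$ (here one should also check that weights per symbol are unaffected by the initial conjugacy); both are standard but must be cited precisely. The remaining parts --- the support and entropy bookkeeping, the squeeze for $(2)\Rightarrow(3)$, and the cocycle computation for $(4)\Rightarrow(1)$ --- are routine.
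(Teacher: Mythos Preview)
Your argument is correct, but it takes a different route from the paper's. The paper proves the cycle $(1)\Rightarrow(2)\Rightarrow(3)\Rightarrow(1)$ and then shows $((2)\wedge(3))\Leftrightarrow(4)$; the two nontrivial steps are handled by citation: $(3)\Rightarrow(1)$ is Parry's theorem [Par97, Theorem~2] (if $h(\nu)=h(\phi^*\nu)$ for an irreducible Markov $\nu$, the restricted factor map is finite-to-one), and the equivalence with $(4)$ is [PT82, Proposition~44] (constant weight per symbol on periodic points is equivalent to $\nu$ being the mme of its support with that entropy). Everything else is the same squeeze you use.

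Your cycle $(1)\Rightarrow(2)\Rightarrow(3)\Rightarrow(4)\Rightarrow(1)$ replaces those two citations by explicit arguments. Your $(4)\Rightarrow(1)$ is a clean, fully elementary substitute for the combination of [PT82] and [Par97]: the coboundary trick plus Perron--Frobenius recovers $h_{top}(W)=h_{top}(Y)$ directly, and then equal-entropy factor codes are finite-to-one. Your $(3)\Rightarrow(4)$ is the genuinely new ingredient: the Gibbs upper bound for $\mu_0$ (via the right-resolving cover) gives the one-sided inequality on all periodic weights, and the Ma\~n\'e/subaction lemma, together with full support of $\nu$, upgrades this to equality everywhere. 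This is correct for locally constant potentials on irreducible SFTs (passing to a mixing component of a power if needed), and the invariance of weight-per-symbol under the higher-block recoding is straightforward. So you have in effect re-derived the content of [Par97, Theorem~2] in this setting via ergodic optimization. The trade-off: the paper's proof is a few lines of citations, while yours is self-contained but imports the subaction machinery; on the other hand, your $(4)\Rightarrow(1)$ step is arguably more transparent than appealing to [PT82].
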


\begin{proof}
{{\em (1)} {$\Rightarrow$} {{\em (2)}} :} This follows directly from Corollary 8.1.2 of [LM95].

{{\em (2)} {{$\Rightarrow$}} {{\em (3)}} :}
$$
 h_{top}(Y)= h_{top}(supp(\nu)) \ge h(\nu) \ge h(\phi^*(\nu)) = h(\mu_0) =  h_{top}(Y).
$$
This yields {\em (3)}.

{{\em (3)} {{$\Rightarrow$}} {{\em (1)}} :}
Apply [Par97, Theorem 2].

{{\em ((2) and (3))} {{$\Rightarrow$}} {{\em (4)}} :} The condition that for some $c \ge 0$,
for every periodic point in $supp(\nu)$ the $\nu$-weight per symbol is $e^{-c}$,
is equivalent to the condition that $h(\nu) = c$
and that $\nu$ is an mme for $supp(\nu)$.
This is essentially contained in [PT82, Proposition 44].
\end{proof}

It follows from Propositions~\ref{Fto1} and~\ref{Equiv_nu} that P2 holds iff P3 holds with a measure $\nu$ that is also irreducible stationary Markov and satisfies any of the equivalent conditions in Proposition~\ref{Equiv_nu}. We will return to this point in Section~\ref{Conjecture}.
\bigskip

\section{Factor Codes with an Unambiguous Symbol}
\label{unambig}

We begin with a brief introduction to factor codes with an unambiguous symbol. Such factor codes are also known as factor codes with a singleton clump [PQS03].


Let $X$ be a shift space over an alphabet $\mathcal{A}$
 and $D=b_1 b_2 \cdots b_k$ be an allowed block in $X$. Define $\Phi: \mathcal{A}^k\rightarrow \{0,1\}$ by
\begin{align}
\Phi(x_{[1,k]})=
\begin{cases} \label{unam_def}
1 \qquad \mbox{ if } x_{[1,k]}=D \\
0 \qquad \mbox{ otherwise}.
\end{cases}
\end{align}
Then, the factor code $\phi: X \rightarrow Y\subset X_{[2]}$ induced by $\Phi$ is called a {\em factor code with an unambiguous symbol}.  Here, $Y$ is the image of $\phi$.

In the remainder of this paper, we focus on the case when $X$ is an irreducible SFT.
Note that in this case, by passing to a higher block shift, in the preceding definition
we can and sometimes will assume that $k=1$ and that $X$ is an SFT with memory 1.


The following propositions give some properties of $Y$.
\begin{pr}
Let $\phi: X\rightarrow Y$ be a factor code with an unambiguous symbol. Then
$Y$ is an $S$-gap shift. 
\end{pr}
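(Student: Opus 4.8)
The plan is to show that the image $Y = \phi(X)$ is exactly an $S$-gap shift for an appropriate set $S \subseteq \mathbb{Z}_{\geq 0}$, by identifying $S$ directly from the combinatorics of the defining block $D$ and the SFT $X$. After passing to a higher block shift (as the paper notes is permissible), I may assume $k=1$, that $X$ is a $1$-step SFT given by a graph $G$ (vertex shift), and that $D$ is a single symbol $a \in \mathcal{A}$ with $\Phi(a) = 1$ and $\Phi(b) = 0$ for all $b \neq a$. Then a point $y \in Y$ records, for each coordinate, whether the corresponding coordinate of a preimage $x$ equals $a$. The key structural observation is: since $\Phi^{-1}(1) = \{a\}$ is a single symbol, two consecutive $1$'s in $y$ at positions $i < j$ correspond to $x_i = x_j = a$, and the gap $0^{j-i-1}$ between them corresponds to a block $x_{[i,j]} = a\,w\,a$ where $w$ is an allowed block of $X$ of length $j-i-1$ none of whose symbols is $a$. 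So the set of realizable gap-lengths is
\begin{equation*}
S := \{\, s \in \mathbb{Z}_{\geq 0} : a\,w\,a \in \mathcal{B}(X) \text{ for some } w \in \mathcal{B}(X) \text{ with } |w| = s \text{ and } w \text{ contains no } a \,\}.
\end{equation*}
Equivalently, in graph terms, $s \in S$ iff there is a path in $G$ of length $s+1$ from vertex $a$ to vertex $a$ whose intermediate vertices all differ from $a$.

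The next step is to prove the two inclusions $Y \subseteq X(S)$ and $X(S) \subseteq Y$. For $Y \subseteq X(S)$: take any $y \in Y$, write $y = \phi(x)$; every maximal run of $0$'s in $y$ that is flanked by $1$'s on both sides has length in $S$ by the observation above, and runs of $0$'s that are infinite on one or both sides are allowed in $X(S)$ provided $S$ is infinite — so I must separately check that if $S$ is finite then $Y$ contains no point with an infinite run of $0$'s, which follows because $X$ is irreducible and the symbol $a$ occurs in $\mathcal{B}(X)$ (so by irreducibility $a$ recurs with bounded gaps in at least one direction... actually one needs: if all return-gaps to $a$ are bounded by $\max S$, then since $a \in \mathcal{B}(X)$ and $X$ is irreducible, any point either sees $a$ infinitely often in a given direction or the point lies in a "tail" with no $a$; but a bi-infinite path in $G$ avoiding $a$ forever, appended to, would... ) — this edge case is the one to handle carefully. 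For $X(S) \subseteq Y$: given any concatenation of blocks $0^{s_i}1$ with $s_i \in S$, build a preimage in $X$ by concatenating the corresponding paths $a\,w_i\,a$ (overlapping at the shared vertex $a$), and handle the infinite-$0$ ends when $S$ is infinite by choosing an appropriate one-sided infinite $a$-avoiding path — such exists precisely when $S$ is infinite, since infinitely many distinct return lengths to $a$ force (by König's lemma on the graph of $a$-avoiding paths out of $a$) an infinite $a$-avoiding ray. This again is the delicate direction, matching the edge case above.

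The main obstacle, then, is the bookkeeping around the one- and two-sided infinite runs of $0$'s: matching precisely when $X(S)$ allows them (namely iff $S$ is infinite) with when $Y$ contains them, which ultimately reduces to a compactness/König's-lemma argument on paths in $G$ that avoid the vertex $a$, combined with irreducibility of $X$. The "bulk" of the statement — that finite gaps between $1$'s range exactly over $S$ — is essentially immediate from the single-preimage-symbol property and the definition of $S$. I would organize the write-up as: (i) reduce to $k=1$, $M=1$, vertex shift $G$; (ii) define $S$ via $a$-avoiding first-return paths; (iii) prove finite blocks of $Y$ are exactly finite blocks of $X(S)$ (both inclusions, via the path correspondence); (iv) dispatch the infinite-run boundary case using irreducibility and König's lemma to conclude $Y = X(S)$ as shift spaces.
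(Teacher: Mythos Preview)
Your approach is correct and follows the same idea as the paper's proof: identify $S$ as the set of first-return gap lengths to the distinguished symbol and observe that the unambiguous-symbol property makes the correspondence between gaps in $Y$ and $a$-avoiding return paths in $X$ exact. The paper's proof is in fact much terser than yours---it simply writes down the defining conditions on $S$ (in terms of the original $k$-block $D$, without recoding to $k=1$) and asserts that $Y$ consists of arbitrary concatenations of $10^s$ with $s\in S$, without separately treating the infinite-run edge cases.

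Regarding the point where you hesitate: the cleanest way to dispatch the boundary case is to work at the level of languages rather than points. Since shift spaces are determined by their languages, it suffices to show $\mathcal{B}(Y)=\mathcal{B}(X(S))$. For $\mathcal{B}(Y)\subseteq\mathcal{B}(X(S))$, given any $v\in\mathcal{B}(Y)$, use irreducibility of $X$ to extend a preimage block to one beginning and ending with $a$; its image $1v'1$ then has all internal gaps in $S$, so $v\in\mathcal{B}(X(S))$. For the reverse inclusion, any $v\in\mathcal{B}(X(S))$ embeds in some $10^{s_1}1\cdots 10^{s_m}1$ with each $s_i\in S$ (when $S$ is infinite and $v=0^n$, just pick $s\in S$ with $s\ge n$), and concatenating the corresponding first-return paths at $a$ produces a preimage. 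This avoids K\"onig's lemma entirely and makes the finite/infinite-$S$ distinction disappear.
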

\begin{proof}
The elements of $Y$ are arbitrary concatenations of strings of the form $10^s$ with $s\in S$ such that there exists some allowed block $w$ of length $k+s+1$ satisfying the following:
\begin{enumerate}
\item[(1)] $w_{[1,k]}=D$;
\item[(2)] $w_{[s+2,s+k+1]}=D$;
\item[(3)] for all $2\leq i\leq s+1$, $w_{[i,k+i-1]}\neq D$.
\end{enumerate}
Hence, $Y$ is an $S$-gap shift.
\end{proof}

\begin{pr} \label{periodic_and_Y}
Let $\phi: X\rightarrow Y$ be a factor code with an unambiguous symbol. If $X= X_{[2]}$, then
\begin{enumerate}
\item[(1)] $10^{k-1}1$ is not allowed in $Y$ iff $D$ is purely periodic (i.e., $D = u^{\ell}$ for some $\ell \ge 2$
and some block $u$);
\item[(2)] For any $j\geq k$, $10^j1$ is allowed in $Y$.
\end{enumerate}
\end{pr}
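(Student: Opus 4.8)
The plan is to use that $X=X_{[2]}$ is the full $2$-shift to reduce both statements to elementary questions about occurrences of $D=b_1\cdots b_k$ in short binary words. First I would record the following translation. Every binary block is a legal block of $X_{[2]}$ and extends to a point, and for any point $x$ the coordinate $\phi(x)_i$ equals $1$ exactly when $x_{[i,i+k-1]}=D$; hence a block $v$ lies in $\mathcal{B}(Y)$ precisely when $v$ is obtained by sliding $\Phi$ over some binary block. In particular $10^s1\in\mathcal{B}(Y)$ iff some binary word contains two occurrences of $D$ whose starting positions differ by $s+1$ and no occurrence of $D$ strictly in between. For $s=k-1$ the two occurrences abut, so the word is $DD$; for $s\ge k$ they are separated, so the word is $DzD$ for some block $z$ of length $g:=s+1-k\ge1$.

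For (1), the translation says $10^{k-1}1\in\mathcal{B}(Y)$ iff $D$ occurs in $DD$ at no position other than $1$ and $k+1$, i.e.\ at no interior position. I would then invoke the classical fact that a word occurs at an interior position of its own square iff it is a proper power. This is quick to reprove: an interior occurrence of $D$ in $DD$ starting at $j+1$ makes $D$ have periods $j$ and $k-j$, which sum to $k$, so by the Fine--Wilf theorem $D$ has period $\gcd(j,k)$, a proper divisor of $k$, whence $D=u^{\ell}$ with $\ell\ge2$; and conversely $DD=u^{2\ell}$ exhibits an interior occurrence. This gives (1).

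For (2), by the translation it suffices, for every $g\ge1$, to exhibit a binary block $z$ of length $g$ such that $D$ occurs in $DzD$ only at its two end positions; any extension to a point then shows $10^{g+k-1}1\in\mathcal{B}(Y)$, and as $g$ runs over all positive integers this yields all $10^j1$ with $j\ge k$. I would take $z=c^{g}$ for a suitable letter $c\in\{0,1\}$. If $D$ is constant, say $D=\alpha^k$, then $c=\overline\alpha$ works, since $D$ recurs only inside runs of $\ge k$ copies of $\alpha$ and the spacer destroys every such run but the two intended ones. If $D$ is non-constant, I would classify a putative extra occurrence of $D$ in $Dc^gD$ by how it meets the spacer: one beginning in the spacer starts with $c^{\rho}$ followed by a prefix of $D$, which forces $D$ to have a period $\rho$ with its first $\rho$ symbols all equal, hence $D$ constant --- impossible; one ending in the spacer is a suffix of $D$ followed by $c^{\rho}$, which forces $D$ to have a period $\rho$ with its last $\rho$ symbols all equal (a cyclic rotation of the period block), hence $D$ constant --- impossible; and one straddling the whole spacer forces $D$ to have two periods summing to $k+g$ and a block of $g$ consecutive symbols of $D$, at a position determined by those periods, equal to $c$. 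I would then argue, via Fine--Wilf, that all these straddling constraints (over the admissible pairs of periods) pin down the same $g$ consecutive symbols of $D$; if that block is non-constant no straddling occurrence is possible for either $c$, and otherwise I choose $c$ to be the opposite letter. Either way a suitable $z$ exists.

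The hardest step is this last combinatorics-on-words argument in (2): showing that the self-overlapping occurrences of $D$ inside $DzD$ are all controlled by a single consistency condition on the spacer letter. The difficulty is confined to small gaps: for $j\ge 2k-2$ (i.e.\ $g\ge k-1$) no straddling occurrence can occur at all --- it would need a period of $D$ exceeding $g\ge k-1$ --- and an occurrence beginning in the spacer is excluded by taking $c=\overline{b_1}\ne b_1$, so $z=\overline{b_1}^{\,g}$ works with no bookkeeping; only the bounded range $k\le j\le 2k-3$ needs the delicate Fine--Wilf analysis. Part (1) and the constant case of part (2) are immediate.
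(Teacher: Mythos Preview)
Your argument for (1) is correct and is essentially the paper's: both show that an interior occurrence of $D$ in $DD$ at shift $d$ forces $D$ to have period $\gcd(d,k)<k$, hence to be a proper power. The paper does this directly via a B\'ezout identity with indices read modulo $k$; you phrase the same step through Fine--Wilf.

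For (2), your reduction and your handling of constant $D$ and of non-straddling extra occurrences (each forces $D$ to be constant) are fine. The gap is in the straddling step. The assertion that all straddling constraints ``pin down the same $g$ consecutive symbols of $D$'' is false. For $D=00100$ ($k=5$) and $g=2$, $D$ has periods $3$ and $4$, giving straddling shifts $p=4$ and $p=5$; the first forces the spacer to equal $b_3b_4=10$, the second to equal $b_2b_3=01$ --- different windows and different values. For $D=000010000$ ($k=9$, with periods $5,6,7,8$) and $g=4$, the four straddling shifts force the spacer to be $b_{[5,8]}=1000$, $b_{[4,7]}=0100$, $b_{[3,6]}=0010$, $b_{[2,5]}=0001$ respectively. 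In both examples none of the forced values is constant, so a constant spacer still happens to work, but your argument does not establish this in general: what must actually be shown is that no two straddlings can simultaneously force $0^g$ and $1^g$, and a single application of Fine--Wilf does not obviously yield this for $g\ge 2$.

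The paper organizes (2) differently. After the constant-$D$ case it introduces the minimal period $m$ of $D$ and splits on $g\ge m$ versus $g<m$. For $g\ge m$ it takes $z=1^{g}$: any extra occurrence meets the spacer in at least $m$ consecutive positions, so some $m$ consecutive symbols of $D$ equal $1^m$, whence $D=1^k$, a contradiction. For $g<m$ every extra occurrence must straddle (a non-straddling one would give $D$ a period $\le g<m$), and the paper prescribes only the \emph{endpoints} of the spacer, $x_1\ne b_{t+1}$ and $x_{g}\ne b_m$ where $k=ms+t$, rather than a single repeated letter. If you switch to this approach, be warned that the paper's justification of that last condition is terse; indeed the second example above shows the stated endpoint condition alone is not sufficient --- $z=0100$ satisfies $x_1\ne b_5=1$ and $x_4\ne b_5=1$ yet admits an extra copy of $D$ at $p=7$ --- so the delicate range $k\le j\le 2k-3$ requires additional care in either approach.
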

\begin{proof}
To prove {\em (1)}, first observe that $10^{k-1}1$ allowed  iff the image of $DD$ is
$10^{k-1}1$.
If $10^{k-1}1$ is not  allowed, then the image of $DD$ has a prefix of the form
$10^c1$ for some $0\le c \le k-2$. Let $d = c+1 \le k-1$.
Then for  all $0\le i \le k-1$, $b_i = b_{i+d}$ (here and below in this proof, subscripts are read modulo $k$).  It follows that for all integers $m,n$ and
all $0\le i \le k-1$, $b_i = b_{i+md+nk}$.  Let $e = \gcd(d,k)$.  Then $e = md+nk$ for
some $m,n$.  Thus, for all $0\le i \le k-1$, $b_i = b_{i+e}$. It follows that $D = b_1 \cdots b_k= (b_1\cdots b_e)^{k/e}$.  Since $e < k$, $k/e \ge 2$. So, $D$ is purely periodic.

Conversely, assume that $D$ is purely periodic.  Then the image of the block $DD$ is not
$10^{k-1}1$ and so $10^{k-1}1$ is not allowed.

We now prove {\em (2)}. For $j\geq k$, we show $10^j1$ is allowed in $Y$ by finding a binary block $x_1x_2\cdots x_{j-k+1}$ such that
\begin{align} \label{suff_cdn_5.2}
\Phi(b_1 \cdots b_k x_1 x_2\cdots x_{j-k+1} b_1 \cdots b_k) =10^j1.
\end{align}

If $b_1 \cdots b_k=0^k$, then one immediately verifies that $\Phi(b_1\cdots b_k 1^{j-k+1} b_1\cdots b_k)=10^j1$. By reversing the roles of $0$ and $1$ in the domain a similar argument works when $b_1 \cdots b_k=1^k$.

Now assume that $b_1\cdots b_k\neq 0^k$ and $b_1\cdots b_k\neq 1^k$. Express $b_1 \cdots b_k$ uniquely by
\begin{align} \label{unique_expre_b_1b_k}
b_1b_2\cdots b_k =(b_1 \cdots b_m)^s b_1 \cdots b_t \qquad (m\geq 2, s\geq 1, 0\leq t<m)
\end{align}
where $ms+t=k$ and $m$ is the smallest positive integer such that $b_1 \cdots b_k$ can be expressed by (\ref{unique_expre_b_1b_k}). We consider the following two cases:

\noindent{\bf \underline{Case 1:} $j-k+1\geq m$.}

In this case, we claim that (\ref{suff_cdn_5.2}) is satisfied by letting $x_1x_2\cdots x_{j-k+1}=1^{j-k+1}$. To see this, assume to the contrary that
$$
\Phi((b_1 \cdots b_m)^s b_1 \cdots b_t 1^{j-k+1} (b_1 \cdots b_m)^s b_1 \cdots b_t) \neq 10^j1.
$$
This means that there is an extra $1$ in addition to the two $1$'s at the first and the last position in the image. Hence, there is an extra $b_1 \cdots b_k$ in the input in addition to the two at the initial and tail end (these two $b_1 \cdots b_k$'s will be called the head and the tail, respectively). Since $x_1 \cdots x_{j-k-1} =1^{j-k+1}$ and $b_1 \cdots b_k\neq 1^k$, this extra $b_1 \cdots b_k$ must start with some $b_1 \cdots b_t$ in the head or end with some $b_1 \cdots b_t$ in the tail. Thus, it must intersect the ``intermediate" subblock $x_1 \cdots x_{j-k+1}$ in at least $m$ bits.
Therefore, either
\begin{align} \label{in_the_head}
x_1 x_2 \cdots x_m= b_{t+1} \cdots b_m b_1 \cdots b_t.
\end{align}
or
\begin{align} \label{in_the_tail}
x_{j-k-m+2}\cdots x_{j-k+1}=b_1 \cdots b_m.
\end{align}
Recalling that $x_1 \cdots x_{j-m+1}=1^{j-k+1}$, either (\ref{in_the_head}) or (\ref{in_the_tail}) implies $b_1 b_2 \cdots b_k=1^k$, a contradiction.

\noindent{\bf \underline{Case 2:} $1\leq j-k+1 < m$.}

In this case, an extra $b_1 \cdots b_k$ in the input must intersect the head, the tail and the ``intermediate" subblock  $x_1 x_2\cdots x_{j-k+1}$ simultaneously. Thus, this extra $b_1 \cdots b_k$ must start with some $b_1 \cdots b_t$ in the head and end with some $b_1 \cdots b_t$ in the tail. Therefore,  (\ref{suff_cdn_5.2}) holds as long as
\begin{align} \label{x_1_and_x_{j-k+1}}
\begin{cases}
x_1 \neq b_{t+1}  \quad \mbox{and}\quad  x_{j-k+1} \neq b_m \qquad &\mbox{if $j-k>0$}\\
x_1 \neq b_{t+1}  &\mbox{if $j-k=0$},
\end{cases}
\end{align}
which is always possible for some binary $x_1x_2 \cdots x_{j-k+1}$.\end{proof}



\section{Characterization of the One-to-One Condition for Factor Codes with an Unambiguous Symbol} \label{P1_for_unam}

In this section, we address P1 for factor codes with an unambiguous symbol. Through this section, a factor code with an unambiguous symbol always refers to the one induced by $\Phi$ in (\ref{unam_def}) unless otherwise specified.

We have the following theorem which characterizes the existence of a subshift of finite type, on which the restriction of $\phi$ is one-to-one and onto.

\begin{thm} \label{one-to-one_SFT}
Let $\phi: X\rightarrow Y$ be a factor code with an unambiguous symbol defined on an irreducible shift space $X$. Let $S$ be such that $Y$ is an $S$-gap shift. Then, there is a shift space $Z\subset X$ s.t. $\phi\vert_Z$ is a conjugacy from $Z$ onto $Y$ if and only if either of the following conditions holds:
\begin{enumerate}
\item[\textnormal{(C1)}] $S$ is a finite set;
\item[\textnormal{(C2)}] there is a fixed point (i.e., fixed via the shift) in $X$ other than $D^\infty$.
\end{enumerate}
Moreover, $Z$ and $Y$ must be SFTs if either (C1) or (C2) holds.
\end{thm}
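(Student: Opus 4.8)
The plan is to establish the two directions separately. For the "if" direction, I would construct $Z$ explicitly in each case. Suppose (C1) holds, so $S$ is finite with $\max S = N$. Then $Y$ is an SFT (by [DJ12], cited in the excerpt), and I want an SFT $Z \subset X$ on which $\phi$ restricts to a conjugacy. The natural candidate is to build $Z$ from paths in $X$ that trace out, for each allowed block $10^s1$ in $Y$ (equivalently $s \in S$), one fixed "canonical" $X$-block realizing the gap — i.e., pick for each $s \in S$ a word $w^{(s)} = D x_1 \cdots x_{s} $ of length $k+s$ with $\Phi(D x_1 \cdots x_s D) = 10^s1$ whose existence is guaranteed (Proposition~\ref{periodic_and_Y} handles large $s$, and the remaining finitely many small $s\in S$ can be handled directly since they lie in $Y$). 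Concatenating these canonical blocks produces a shift space $Z$; the key point is to choose the $w^{(s)}$ coherently so that no \emph{additional} occurrences of $D$ appear at the seams, which makes $\phi|_Z$ injective, and so that $Z$ is genuinely an SFT — this is where I expect to spend the most care. Because $S$ is finite, $Z$ is described by forbidding finitely many blocks (of length bounded in terms of $k$ and $N$), so $Z$ is an SFT, and $\phi|_Z \to Y$ is onto by construction and injective by the seam condition; a sliding-block inverse exists with window governed by $N$.

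For case (C2), suppose $X$ has a fixed point $a^\infty$ with $a^\infty \neq D^\infty$; here I may assume $k=1$ after passing to a higher block shift, so $D$ is a single symbol and $a \neq D$. Now I build $Z$ so that every gap $10^s1$ in $Y$ is realized by the single canonical $X$-word $D a^s D$. Since $a \ne D$, this word contains exactly two occurrences of $D$, so no spurious $1$'s are introduced at the seams, giving injectivity. Concretely $Z$ is the set of bi-infinite concatenations of blocks $\{Da^s : s \in S\}$ (together with $a^\infty$ and the appropriate one-sided limit points when $S$ is infinite); this is exactly an isomorphic copy of the $S$-gap shift sitting inside $X$, carried by the substitution $1 \mapsto D$, $0 \mapsto a$. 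Then $\phi|_Z$ is the identity on $S$-gap shifts up to this relabeling, hence a conjugacy onto $Y$. The point needing verification is that all the required edges $D \to a$, $a \to a$, $a \to D$ are present in $X$ — the first and last follow from irreducibility of $X$ combined with the fact that $a$ and $D$ both occur (so after replacing $X$ by a suitable higher block presentation and using irreducibility, or by a short separate argument, these transitions are available), and $a \to a$ is the fixed point. In this case $Z$ \emph{need not} be an SFT when $S$ is infinite — but that is fine for the "if" direction, since Theorem~\ref{one-to-one_SFT} only asserts $Z$ is an SFT under the hypothesis that (C1) or (C2) holds, and when $S$ is infinite but (C2) holds we will see below that (C1) actually also holds, forcing $Z$ to be rechosen. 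So really the two cases are not exclusive, and the "Moreover" clause is what ties things together; see the next paragraph.

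For the "only if" direction together with the "Moreover" clause, suppose some shift space $Z \subset X$ has $\phi|_Z$ a conjugacy onto $Y$, and suppose (C1) fails, i.e. $S$ is infinite; I must produce a fixed point of $X$ other than $D^\infty$, i.e. establish (C2), and also show $Y$ and $Z$ are SFTs. The idea: since $S$ is infinite, $Y$ contains blocks $10^s1$ with $s$ arbitrarily large, so $Z$ contains points with arbitrarily long runs mapping to $0$; pulling back through the conjugacy and using compactness of $Z$, extract a limit point $z \in Z$ that is a bi-infinite point whose image is $0^\infty$ — but $0^\infty \in Y$ precisely because $S$ is infinite, and $\phi|_Z$ is \emph{onto}, hence this $z$ is the \emph{unique} $\phi|_Z$-preimage of $0^\infty$. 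Now apply the shift: $\phi(\sigma z) = \sigma(0^\infty) = 0^\infty$, and by uniqueness $\sigma z = z$, so $z$ is a fixed point of $X$. It remains to see $z \neq D^\infty$: since $\phi(D^\infty) = 1^\infty \neq 0^\infty$, indeed $z \neq D^\infty$. This gives (C2). The main obstacle I anticipate is the passage to the limit point with image exactly $0^\infty$: one must ensure the limit lies in $Z$ (automatic by compactness of $Z$) and that $\phi(z) = 0^\infty$ genuinely, which follows since on longer and longer central windows the image is all $0$'s and $\phi$ is continuous. For the "Moreover" clause: if (C1) holds then $Y = X(S)$ with $S$ finite is an SFT by \eqref{standard_forbidden_S_gap}; if instead (C1) fails but (C2) holds, I still must conclude $Y$ is an SFT — but I claim this case cannot arise together with the conjugacy hypothesis unless $S$ is in fact cofinite or finite. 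Indeed, with (C2) giving a fixed point $a^\infty \neq D^\infty$ and using that $\phi|_Z$ is a conjugacy, one analyzes which gaps $s$ can occur: roughly, once a fixed letter $a \ne D$ is available, every sufficiently large gap is realizable (cf. Proposition~\ref{periodic_and_Y}(2)), so $S$ is cofinite, hence $Y$ is an SFT by \eqref{standard_forbidden_S_gap}; and then $Z$, being a shift space that is conjugate to an SFT, is itself an SFT (conjugacy preserves the SFT property). So in all cases $Z$ and $Y$ are SFTs, completing the proof. The delicate points to nail down are precisely (i) the limit-point extraction above and (ii) the claim that under the conjugacy hypothesis, (C2) forces $S$ cofinite — the latter is where I'd expect to invest the most technical effort, likely by combining the uniqueness of preimages with a counting/pumping argument on long $0$-runs.
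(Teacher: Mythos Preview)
Your ``only if'' direction is essentially correct and matches the paper's argument: since $0^\infty \in Y$ when $S$ is infinite, the conjugacy $\phi|_Z$ pulls it back to a fixed point $z \in Z$, and $z \ne D^\infty$ because $\phi(D^\infty)$ contains $1$'s (your claim that $\phi(D^\infty) = 1^\infty$ is too strong in general, but $\phi(D^\infty) \ne 0^\infty$ is all you need).

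There is, however, a genuine gap in your ``if'' direction under (C2). After recoding so that $D$ is a single vertex and $a \ne D$ carries a self-loop, you assert that the edges $D \to a$ and $a \to D$ are available ``from irreducibility \ldots\ or by a short separate argument.'' This is false: irreducibility only guarantees \emph{paths} from $D$ to $a$ and back, not direct edges, and no higher-block recoding will manufacture an edge that was not there. So your $Z$ built from words $D a^s$ need not sit inside $X$ at all. The paper fixes exactly this: it chooses paths $\beta^+$ from $D$ to $A$ and $\beta^-$ from $A$ to $D$ (avoiding $D$ in their interiors), sets $N = |\beta^+\beta^-| - 1$, and for $s \ge N$ uses the first-return cycle $\beta^+ \tau^{s-N} \beta^-$ to realize the gap $s$; for the finitely many $s \in S$ with $s < N$ it picks arbitrary first-return cycles. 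The inverse map $\eta: Y \to X$ so defined is a sliding block code with window $N$, and $Z = \eta(Y)$. Your substitution $1 \mapsto D$, $0 \mapsto a$ is the degenerate special case $|\beta^\pm| = 1$.

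Separately, the paragraph beginning ``In this case $Z$ need not be an SFT \ldots'' is confused: you write that when $S$ is infinite and (C2) holds ``we will see below that (C1) actually also holds,'' which is a contradiction since (C1) says $S$ is finite. The correct logic, which you partially recover later, is that (C2) together with irreducibility of $X$ forces $S$ to be \emph{cofinite} (route a path from $D$ through the self-loop at $a$ and back to $D$, padding with as many loops as you like), hence $Y$ is an SFT; then $Z$, being conjugate to $Y$, is an SFT as well. This argument does not use the conjugacy hypothesis at all, contrary to what you suggest in your final paragraph.
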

\noindent(Note: $D^{\infty}$ may or may not be in $X$ and even if $D^\infty\in X$, it may or may not be a fixed point.)
\begin{rem}
Note that to say that $S$ is finite means that there exists some $M$ such that
every allowed block in $X$ of length $M$ contains $D$ as a subblock. Sometimes, one says that in such a case $D$ is a ``Rome".
\end{rem}

\begin{rem}
According to Proposition \ref{Fto1}, when (C1) or (C2) holds, the capacity of the deterministic channel, defined by $\phi$, is achieved by a Markov chain.
\end{rem}

\noindent{\em Proof of Theorem \ref{one-to-one_SFT}:}
We first show that $Y$ must be an SFT (and thus $Z$ must also be an SFT) when (C1) or (C2) holds. This follows from the
fact that an $S$-gap shift is an SFT iff $S$ is either finite or cofinite [DJ12]. If (C1) holds, there is nothing to prove. If (C2) holds, then, using irreducibility of $X$, the image $Y$ contains points with blocks that begin and end with $1$ and contain arbitrarily long strings of $0$'s in between, and thus $S$ is cofinite.


{\bf Only if part:} If $S$ is finite, we are done. So assume that $S$ is infinite. Then $0^\infty\in Y$. Since there exists a shift space $Z\subset X$ s.t. $\phi\vert_Z$ is a conjugacy from $Z$ onto $Y$, $Z$ must have a fixed point $z$ s.t. $\phi(z)=0^\infty$. Finally, noting that $D^\infty \notin X$ or $\phi(D^\infty) \neq 0^\infty$, we conclude that $z$ must be different from $D^\infty$.

{\bf If part:} Assume Condition (C2) of the theorem. Up to recoding, we may assume that $X$ is a (1-step) vertex shift $\widehat{X_G}$, $D$ is a vertex of the graph $G$ and there is a vertex $A$ in $G$ such that $A$ is distinct from $D$ and $G$ has a self-loop  $\tau$ at $A$. Using irreducibility of $X$, there are paths in $G$, $\beta^+$ from $D$ to $A$ and $\beta^-$ from $A$ to $D$, neither of which contains $D$ in its interior. Let $N: =\vert \beta^+\beta^-\vert-1$.

Now $Y$ is a gap shift with gap set of the form $S:= F \cup \{N, N+1, \cdots\}$, where each element of $F$ is less than $N$. For each $s\in S$, choose $\pi^s$ to be a first-return cycle of length $s$ from $D$ to itself (``first-return" means that it does not contain $D$ in its interior). We will assume that for $s\geq N$, we  choose $\pi^s=\beta^+ \tau^{s-N} \beta^-$. For $y\in Y$, let $O_y:= \{j\in \mathbb{Z}: y_j=1\}$ and define $\eta: Y \rightarrow X$ as follows:
\begin{enumerate}
\item[(D1)] if $i\in O_y$, define $(\eta(y))_i=D$;
\item[(D2)] if $j,j'\in O_y$ and $\{l\in \mathbb{Z}: j<l<j'\}\subset O_y^c$, define $(\eta(y))_{[j,j']}=V(\pi^{j'-j})$;
\item[(D3)] If $O_y$ has a maximum element $s$, define $(\eta(y))_{[s,\infty)}=V(\beta^+\tau^{\infty})$;
\item[(D4)] if $O_y$ has a minimum element $s$, define $(\eta(y))_{(-\infty, s]}=V(\tau^{\infty} \beta^-)$;
\item[(D5)] if $O_y=\emptyset$, define $\eta(y)=A^\infty$.
\end{enumerate}

Observe that $\eta$ is injective because if $y,y'\in Y$ and $y\neq y'$, then for some $i$, WLOG we assume $y_i=1$ and $y_i'=0$ and so $(\eta(y))_i=D$ and $(\eta(y'))_i\neq D.$ Furthermore, we claim that $\eta$ is a sliding block code. To see this, note that $\eta$ is shift-invariant by virtue of its definition, and $(\eta(y))_i$ is a function of $y_{[-N+i, N+i]}$.

So, $\eta$ is an injective sliding block code from $Y$ into $X=\widehat{X_G}$. Let $Z$ be its image. Then, $\eta^{-1}$ is a bijective sliding block code from $Z$ onto $Y$.
Moreover, by the construction of $\eta$, for every $y\in Y$,
\begin{equation} \label{phi-eta-inverse}
\phi \circ \eta(y) =y.
\end{equation}
It follows that $\eta^{-1} = \phi \vert_Z$. This completes the proof of the if part assuming Condition (C2).

Now assume Condition (C1). The proof follows along the same lines except that the definition of $\eta$ is even easier: $S=F$ is a finite set, and we only need the first two cases, (D1) and (D2), of the definition of $\eta$ because for any $y\in Y$, $O_y$ is a nonempty set with no maximum and no minimum. \qed

\begin{exmp} \label{1010->1}
Let $\mathcal{F}_1=\{111\}$, $X=X_{\mathcal{F}_1}$ and $\Phi: \{0,1\}^4 \rightarrow \{0,1\}$ be a $4$-block code defined by
$$
\Phi(x_{[1,4]})=
\begin{cases}
1 \quad \mbox{if } x_{[1,4]}=1010\\
0 \quad \mbox{otherwise}.
\end{cases}
$$
We let $\phi: X\rightarrow Y$ be the factor code with an unambiguous symbol induced by $\Phi$. Using Proposition \ref{periodic_and_Y}, one can verify that $Y$ is an $S$-gap shift with $S=\{1,4,5,6, 7\cdots\}$. Equivalently, $Y$ is an SFT with the forbidden set $\mathcal{F}=\{11,1001,10001\}$. Moreover, since $0^\infty\in X$, Condition (C2) is satisfied and we conclude from Theorem \ref{one-to-one_SFT} that there is an SFT $Z\subset X$ such that $\phi\vert_Z$ is a conjugacy from $Z$ to $Y$.
\end{exmp}


When the domain of $\phi$ is $X_{[2]}$, then Condition (C2) in Theorem \ref{one-to-one_SFT} holds and there is always an SFT $Z \subset X$ to which the restriction of $\phi$ is one-to-one and onto $Y$. Note that $Y$ must be an $S$-gap shift with $S$ cofinite. Our next result gives an explicit description of $Z$ for some special cases.


\begin{thm} \label{domain_full_one-to-one}
Let $\phi:X=X_{[2]} \rightarrow Y$ be a factor code with an unambiguous symbol,
$\mathcal{F}$ be the standard forbidden set of $Y$ and $\overline{\mathcal{F}}$ be the bitwise complement of $\mathcal{F}$. Then, the following are equivalent:
\begin{enumerate}
\item[(1)] At least one of the symbols from $\{0,1\}$ occurs at most once in $D$;
\item[(2)] Either $\phi\vert_{X_\mathcal{F}}$ or $\phi\vert_{X_{\overline{\mathcal{F}}}}$ is one-to-one and onto $Y$;
\item[(3)] Either $\phi\vert_{X_\mathcal{F}}$ or $\phi\vert_{X_{\overline{\mathcal{F}}}}$ is finite-to-one and onto $Y$;
\item[(4)] Either $\phi\vert_{X_{\mathcal{F}}}$ or $\phi\vert_{X_{\overline{\mathcal{F}}}}$ is onto $Y$.
\end{enumerate}

\noindent(Note: When {(1)} holds, $\phi\vert_{X_\mathcal{F}}$ and $\phi\vert_{X_{\overline{\mathcal{F}}}}$ may not both satisfy {(2)} ({resp.}, {(3) and (4)}). For example, suppose $k=4$ and $D=b_1 b_2 b_3 b_4=0000$. Then, one verifies that $\phi\vert_{X_\mathcal{F}}$ is one-to-one and onto, but $\phi\vert_{X_{\overline{\mathcal{F}}}}$ is not. See Example \ref{0000_maps_to_1} for more details.)
\end{thm}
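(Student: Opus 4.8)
The plan is to establish the cycle of implications $(1) \Rightarrow (2) \Rightarrow (3) \Rightarrow (4) \Rightarrow (1)$, with the bulk of the work in $(1) \Rightarrow (2)$ and $(4) \Rightarrow (1)$. First recall that, since $X = X_{[2]}$, Theorem~\ref{one-to-one_SFT} already guarantees some SFT $Z \subset X$ with $\phi|_Z$ a conjugacy onto $Y$; the point here is to identify $Z$ concretely as either $X_{\mathcal{F}}$ or $X_{\overline{\mathcal{F}}}$ (the $S$-gap SFT $Y$ with its standard forbidden set, or its bitwise complement), and to pin down exactly when this is possible. Note $Y$ is an $S$-gap shift with $S$ cofinite, so $\mathcal{F}$ is its standard forbidden set of the form $\{10^m1 : m \notin S\}$, and $\overline{\mathcal{F}} = \{01^m0 : m \notin S\}$; both $X_{\mathcal{F}}$ and $X_{\overline{\mathcal{F}}}$ are SFTs inside $X_{[2]}$, and the latter is just the image of the former under the bit-flip automorphism of $X_{[2]}$.

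For $(1) \Rightarrow (2)$, suppose WLOG that $1$ occurs at most once in $D = b_1 \cdots b_k$. The natural candidate is $Z = X_{\mathcal{F}}$; I would argue that $\phi|_{X_{\mathcal{F}}}$ is one-to-one and onto $Y$. Surjectivity: given $y \in Y$ with gaps drawn from $S$, realize it by placing the block $D$ at each coordinate where $y$ has a $1$ and using the first-return cycles of $X_{[2]}$ between consecutive $1$'s (as in the construction of $\eta$ in the proof of Theorem~\ref{one-to-one_SFT}); one must check the resulting point lies in $X_{\mathcal{F}}$, i.e. has no forbidden $S$-gap pattern, which is immediate since its image is exactly $y \in Y$. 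Injectivity is where the hypothesis on $D$ is used: if $x, x' \in X_{\mathcal{F}}$ have $\phi(x) = \phi(x') = y$, then $x, x'$ both place a copy of $D$ at each $1$-coordinate of $y$ and nowhere else; because $1$ appears at most once in $D$, the positions of $D$-occurrences together with the requirement that no extra $D$ appear force every coordinate of $x$ (and $x'$) to be determined — in particular the "gap" fillers are forced to be all-$0$ wherever $D \ne 0^k$, and a short combinatorial case analysis (splitting on whether $D = 0^k$, $D = 1^k$ — the latter excluded unless $k = 1$ — or the mixed case with the unique-$1$ position) shows $x = x'$. I expect this injectivity argument, keeping careful track of overlaps between adjacent $D$-blocks and the $S$-gap structure of $Y$, to be the main obstacle; the role of "unique symbol" is precisely to rule out ambiguous tilings by $D$.

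The implications $(2) \Rightarrow (3) \Rightarrow (4)$ are trivial (one-to-one implies finite-to-one implies, together with onto, onto). For $(4) \Rightarrow (1)$ I would prove the contrapositive: suppose both $0$ and $1$ occur at least twice in $D$. The goal is to show neither $\phi|_{X_{\mathcal{F}}}$ nor $\phi|_{X_{\overline{\mathcal{F}}}}$ is onto $Y$. By the bit-flip symmetry it suffices to handle $X_{\mathcal{F}}$. Since $1$ occurs at least twice in $D$, the block $D$ itself contains a subword $10^m1$ for some $m$; because $S$ is cofinite and these internal gaps of $D$ are at most $k-2 < k$, one can exhibit an element $y \in Y$ (for instance one with a $1$ at some coordinate and then only large gaps, or the all-$0$ point, depending on $S$) every $\phi$-preimage of which in $X_{[2]}$ must, wherever it writes a $D$, introduce a pattern $10^m1$ with $m \notin S$ that is forbidden in $X_{\mathcal{F}}$ — hence $y$ has no preimage in $X_{\mathcal{F}}$. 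Symmetrically, $0$ occurring twice in $D$ obstructs surjectivity onto $Y$ from $X_{\overline{\mathcal{F}}}$ via a forbidden $01^m0$. Assembling the non-surjectivity on both sides contradicts $(4)$, completing the cycle. Finally, the parenthetical remark (that when $(1)$ holds only one of the two restrictions need work) is illustrated by the $D = 0000$ example, which I would defer to Example~\ref{0000_maps_to_1}.
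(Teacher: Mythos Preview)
Your cycle $(1)\Rightarrow(2)\Rightarrow(3)\Rightarrow(4)\Rightarrow(1)$ matches the paper's structure, and your bit-flip reduction in $(4)\Rightarrow(1)$ is legitimate. However, there are genuine gaps in both nontrivial implications.

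In $(1)\Rightarrow(2)$, your surjectivity argument is a non-sequitur: you say the $\eta$-constructed preimage $x$ lies in $X_{\mathcal{F}}$ ``since its image is exactly $y\in Y$,'' but $x\in X_{\mathcal{F}}$ means $x$ \emph{itself} avoids the patterns $10^m1$ with $m\notin S$, which has nothing to do with $\phi(x)\in Y$. You need to exhibit the preimage explicitly and check it avoids $\mathcal{F}$. The paper splits into two cases. For $D=0^k$ it writes down the preimage directly. For $D=0^{j-1}10^{k-j}$ it makes the key observation you are missing: on $X_{\mathcal{F}}$ the map $\phi$ is literally a power of the shift, $\phi(x)=\sigma^{j-k}(x)$, because the gap constraint in $X_{\mathcal{F}}$ forces every $1$ in $x$ to sit inside a window that looks exactly like $D$. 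This single identity gives both injectivity and surjectivity at once; your injectivity sketch (``positions of $D$-occurrences force every coordinate'') does not.

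In $(4)\Rightarrow(1)$, your core claim fails: you assert that since $1$ occurs twice in $D$, the block $D$ contains a subword $10^m1$ with $m\notin S$. But take $D=0101$ (two $0$'s, two $1$'s). The only such subword of $D$ is $101$, and one checks $1\in S$ here, so $101\notin\mathcal{F}$. More generally, for alternating $D$ no forbidden $10^m1$ lives inside $D$. The paper handles this by a genuine case analysis on whether $00$, $11$, both, or neither appear in $D$; in the alternating case (and in parts of the mixed case) one must look at a preimage of a word like $10^{k-1}1$ or $10^k1$, where the forbidden pattern arises from the \emph{junction} of two copies of $D$, not from $D$ alone. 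Your outline does not account for this, and without it the contrapositive does not go through.
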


\begin{proof}
When $k=1$, $Y=X=X_{[2]}$ and $\phi$ is trivially a conjugacy. Hence, we assume $k\ge2$ throughout the remainder of the proof.

{{\em (1)} {$\Rightarrow$} {{\em (2)}} :} We consider the following two cases.\\
\noindent{\bf \underline{Case 1}: $b_1 \cdots b_k=0^{k}$ or $b_1 \cdots b_k=1^k$.}

Assume $b_1 \cdots b_k=0^{k}$.
Then, $Y$ is an $S$-gap shift with $S=\{0, k, k+1, \cdots\}$. Equivalently, $Y$ is an SFT with forbidden set
$$\mathcal{F}=\{101,1001, \cdots, 10^{k-1}1\}.$$

Note that any $y\in Y$ can be uniquely expressed by $y=\cdots 1^{m_1}0^{n_1} 1^{m_2}0^{n_2} 1^{m_3} \cdots$ with $m_i\geq 1, n_i\geq k$. Define
$$x:= \cdots 0^{m_1+k-1}1^{n_1-k+1} 0^{m_2+k-1} 1^{n_2-k+1} 0^{m_3+k-1} \cdots.$$ Then, $x\in X_{\mathcal{F}}$ and $\phi(x)=y$. Hence, $\phi\vert_{X_{\mathcal{F}}}$ is onto.

We then claim that $\phi \vert_{X_\mathcal{F}}$ is one-to-one. To see this, consider $x, x'\in X_{\mathcal{F}}$ and $x\neq x'$. Then, for some $i$, WLOG we assume $x_i=1, x_i'=0.$ Now, $x_i=1$ implies $(\phi(x))_{[i, i+k-1]}=0^k$; on the other hand, recalling that $\mathcal{F}=\{101,1001, \cdots, 1{0}^{k-1}1\}$, we deduce from $x'_i=0$ that there is an $i\leq l\leq i+k-1$ such that $x'_{[l-k+1, l]}=0^k$ and therefore $(\phi(x'))_l=1$. Thus, $\phi(x)\neq \phi(x')$ and $\phi\vert_{X_{\mathcal{F}}}$ is one-to-one.





By reversing the roles of $0$ and $1$ in the domain it follows that $\phi \vert_{X_{\overline{\mathcal{F}}}}: X_{\overline{\mathcal{F}}} \rightarrow X_{\mathcal{F}}$ is also one-to-one and onto when $b_1 \cdots b_k={1}^{k}$.
\medskip

\noindent{\bf \underline{Case 2}: There is only one $0$ or only one $1$ in $b_1 b_2 \cdots b_k$.}

We first assume that
$b_j=1$ for some $1\leq j\leq k$ and $b_i=0$ for any $1\leq i\leq k$ and $i\neq j$. 
Let $M:=\max\{j-1, k-j\}$. Then $Y$ is an $S$-gap shift with $S=\{M, M+1, \cdots\}$. Equivalently, $Y$ is an SFT with the forbidden set $\mathcal{F}=\{11, 101, \cdots , 10^{M-1} 1\}$. By expressing any $x\in X_{\mathcal{F}}$ by
$$
x=\cdots 1 0^{m_{-1}} 1 0^{m_0} 1 0^{m_1} 1 \cdots
$$
with $m_l\geq M$ for all $l\in \mathbb{Z}$, one directly verifies that $\phi(x) = \sigma^{j-k} (x)$.
Thus, $\phi\vert_{X_\mathcal{F}}$ must be one-to-one and onto $Y$.

By reversing the roles of $0$ and $1$ in the domain it follows that $\phi\vert_{X_{\overline{\mathcal{F}}}}\rightarrow X_{\mathcal{F}}$ is also one-to-one and onto when there is only one $0$ in $b_{[1,k]}$.
\medskip

{ {\em (2)} $\Rightarrow$ {\em (3)}:} Obvious.
\medskip

{{\em (3)} $\Rightarrow$ {\em (4)}:} Obvious.
\medskip

{{\em (4)} $\Rightarrow$ {\em (1)}:}
We prove by way of contradiction. Suppose there are at least two $1$'s and at least two $0$'s in $b_{[1,k]}$. Then, $k\geq 4$ and
$11\in \mathcal{F}$.
We will show that both $\phi\vert_{X_{\mathcal{F}}}$ and $\phi\vert_{X_{\overline{\mathcal{F}}}}$ are not onto by finding a $y\in Y$ and two blocks $B_1\in \mathcal{F}$ and $B_2\in \overline{\mathcal{F}}$ such that any $x\in \phi^{-1}(y)$ contains $B_1$ and $B_2$. Indeed, if such a $y$ exists, then $y\notin \phi(X_{\mathcal{F}})$ and $y\notin \phi(X_{\overline{\mathcal{F}}})$ and therefore both
$\phi\vert_{X_{\mathcal{F}}}$ and $\phi\vert_{X_{\overline{\mathcal{F}}}}$
are not onto, contradicting {\em (4)}.

We consider the following cases:

\noindent{\bf \underline{Case 1:}  Both $00$ and $11$ are subblocks of $b_1 b_2 \cdots b_k$}.

Choose $y\in Y$ with $y_0=1$. Then, for any $x\in \phi^{-1}(y)$, $x_{[-k+1,0]}=b_{[1,k]}$. Since $11\in \mathcal{F}$, $00\in \overline{\mathcal{F}}$ and they are both subblocks of $x$, we conclude that $\phi\vert_{X_{\mathcal{F}}}$ and $\phi\vert_{X_{\overline{\mathcal{F}}}}$ are not onto.
\medskip

\noindent{\bf \underline{Case 2:} Neither $00$ nor $11$ is a subblock of $b_1 b_2 \cdots b_k$}.

In this case, $b_1 b_2 \cdots b_k$ is a binary block with $0$ and $1$ occurring alternately.
We assume WLOG that $b_1b_2\cdots b_k=010101\cdots$.

If $k$ is odd,
one verifies that $b_1=b_k=0$, $\mathcal{F}=\{10^j1: j\in \{0, 2, 3, \cdots k-2\}\}$ and $\overline{\mathcal{F}}=\{01^j0:  j\in \{0, 2, 3, \cdots, k-2\}\}$.
Consider $y\in Y$ such that $y_{[0, k]}=1 0^{k-1} 1$. For any $x\in \phi^{-1}(y)$, $x_{[-k+1, k]}=(b_1 b_2 \cdots b_k)^2$; in particular, $x_{[-1, 2]}=b_{k-1}b_kb_1b_2=1001\in \mathcal{F}$ and $x_{[0,1]}=b_kb_1=00 \in \overline{\mathcal{F}}$. Thus, both $\phi\vert_{X_{\mathcal{F}}}$ and $\phi\vert_{X_{\overline{\mathcal{F}}}}$ are not onto.

 If $k$ is even,
 $\mathcal{F}=\{10^j1:  j\in \{0, 2, 3, \cdots, k-1\}\}$ and $\overline{\mathcal{F}}=\{01^j0:  j\in \{0, 2, 3, \cdots, k-1\}\}$.
 Consider $y\in Y$ such that $y_{[0,k+1]}=10^k1$. Then for any $x\in \phi^{-1}(y)$, either $x_{[-k+1, k+1]}=b_1b_2\cdots b_k 0 b_1 b_2\cdots b_k$ or $x_{[-k+1, k+1]}=b_1b_2\cdots b_k 1 b_1 b_2\cdots b_k$. In the former case, $x_{[0,3]}=1001\in \mathcal{F}$ and $x_{[0,1]}=00\in\overline{\mathcal{F}}$; in the latter case, $x_{[0,1]}=11\in \mathcal{F}$ and $x_{[-1,2]}=0110\in \overline{\mathcal{F}}$. Therefore $\phi\vert_{X_{\mathcal{F}}}$ and $\phi\vert_{X_{\overline{\mathcal{F}}}}$ are not onto in both cases.

 \noindent{\bf \underline{Case 3:} Exactly one of $00$ or $11$ is a subblock of $b_1b_2\cdots b_k$.}

We assume WLOG that $11$ is a subblock of $b_1b_2\cdots b_k$ yet $00$ is not. If for any $2\leq j\leq k-2$, $01^j0$ is not a subblock of $b_1b_2 \cdots b_k$, then
$b_1b_2 \cdots b_k=1^{m_1} (01)^{m_2} 1^{m_3}$ where either $m_1\geq 2, m_2 \geq 2, m_3 \geq 0$ or $m_1\geq 0, m_2 \geq 2, m_3 \geq 1$.
In either case, one directly verifies that  $11\in \mathcal{F}$, $010\in \overline{\mathcal{F}}$. Consider any $y\in Y$ with $y_0=1$. Then, any $x\in \phi^{-1} (y)$ satisfies $x_{[-k+1, 0]}= b_1 b_2 \cdots b_k$ and therefore it contains both $11$ and $010$. Thus, both $\phi\vert_{X_{\mathcal{F}}}$ and $\phi\vert_{X_{\overline{\mathcal{F}}}}$ are not onto.

Otherwise, there exists  $2\leq j\leq k-2$ such that $01^{j}0$ is a subblock of $b_1 b_2 \cdots b_k$.
If $b_1 b_2\cdots b_{k-j-1}\neq b_{j+2}\cdots b_k$, then $10^j1 \in \mathcal{F}$ and therefore $01^j0 \in \overline{\mathcal{F}}$. Let $y\in Y$ be such that $y_0=1$. Then, for any $x\in\phi^{-1}(y)$, $x_{[-k+1, 0]}=b_1 b_2 \cdots b_k$ and therefore $x$ contains both  $11\in \mathcal{F}$ and $01^j0 \in \overline{\mathcal{F}}$. Hence,
both $\phi\vert_{X_{\mathcal{F}}}$ and $\phi\vert_{X_{\overline{\mathcal{F}}}}$ are not onto.

If $b_1 b_2\cdots b_{k-j-1}= b_{j+2}\cdots b_k$, then
{\small
$$
b_1 b_2 \cdots b_k=
\begin{cases}
1^{s_1} (01^j)^{m_1} \qquad \ \ \mbox{ with } 0\leq s_1\leq j, m_1\geq 2 \mbox{ and } s_1+m_1(j+1)=k
\medskip
\\
\mbox{ or }
\medskip
\\
1^{s_2} (01^j)^{m_2} 01^{t_2} \quad \mbox{ with } 0\leq s_2 \leq j, m_2\geq 1, 0\leq t_2\leq j-1 \mbox{ and } s_2+m_2(j+1)+t_2+1=k,
\end{cases}
$$
}
and $10^{i}1\in \mathcal{F}$ for any $j+1\leq i\leq 2j$.

\begin{itemize}
\item[]{\bf \underline{Subcase 3.1:} $b_1 b_2 \cdots b_k=1^{s_1} (01^j)^{m_1}$ for some $0\leq s_1\leq j$ and $m_1\geq 2$.}

If $s_1=0$, $b_1b_2\cdots b_k=(01^j)^{m_1}$ and it is purely periodic.
In this case, we infer from Proposition~\ref{periodic_and_Y} {\em (1)} that $10^{k-1}1$ is not allowed in $Y$ but $10^{k}1$ is. Consider $y\in Y$ with $y_{[0,k+1]}=10^k1$. For any $x\in \phi^{-1}(y)$, either $x_{[-k+1, k+1]}=b_1 b_2\cdots b_k 0 b_1 b_2\cdots b_k = (01^j)^{m_1} 0 (01^j)^{m_1}$ or $x_{[-k+1, k+1]}=b_1 b_2\cdots b_k 1 b_1 b_2\cdots b_k = (01^j)^{m_1} 1 (01^j)^{m_1}$. In the former case, $x_{[0,1]}=00\in \overline{\mathcal{F}}$; in the latter case, $x_{[-j-1,1]}=01^{j+1}0 \in \overline{\mathcal{F}}$. Since $b_1 b_2 \cdots b_k$ contains $11\in \mathcal{F}$, we conclude that both $\phi\vert_{X_{\mathcal{F}}}$ and $\phi\vert_{X_{\overline{\mathcal{F}}}}$ are not onto.

If $s_1\neq 0$, $b_1b_2\cdots b_k$ is not purely periodic. Hence, we infer from Proposition \ref{periodic_and_Y} {\em (1)} that $10^{k-1}1$ is allowed in $Y$. A similar argument as in Case 2 for odd $k$ implies that both $\phi\vert_{X_{\mathcal{F}}}$ and $\phi\vert_{X_{\overline{\mathcal{F}}}}$ are not onto.

 \item[] {\bf \underline{Subcase 3.2:} $b_1 b_2 \cdots b_k=1^{s_2} (01^j)^{m_2}01^{t_2}$ for some $0\leq s_2\leq j$, $m_2\geq 1$ and $0\leq t_2 \leq j-1$.}

If $s_2=j$ and $t_2=0$, $b_1b_2\cdots b_k=(1^j0)^{m_2}$. By reversing the roles of $0$ and $1$, a similar argument as in Subcase 3.1 for $s_1=0$ implies that both $\phi\vert_{X_{\mathcal{F}}}$ and $\phi\vert_{X_{\overline{\mathcal{F}}}}$ are not onto.

If $s_2\neq j$ or $t_2\neq0$, a similar argument as in Subcase 3.1 for $s_1 \neq 0$ again implies that both $\phi\vert_{X_{\mathcal{F}}}$ and $\phi\vert_{X_{\overline{\mathcal{F}}}}$ are not onto.
\end{itemize}
\end{proof}

\begin{exmp} \label{0000_maps_to_1}
Let $\Phi: \{0,1\}^2 \rightarrow \{0,1\}$ be a $4$-block code defined by
$$
\Phi(0000)=1 \quad \mbox{and} \quad \Phi(b_1 b_2 b_3 b_4)=0 \quad \mbox{ if } \quad b_1 b_2 b_3 b_4\neq 0000.
$$
Let $\phi: X=X_{[2]}\rightarrow Y$ be the factor code induced by $\Phi$. Using Proposition \ref{periodic_and_Y},  one verifies that $Y$ is an $S$-gap shift with $S=\{0, 4,5,6, \cdots\}$. Equivalently, $Y$ is an SFT with the forbidden set $\mathcal{F}=\{101, 1001, 10001\}$. Noting that $1^\infty\in X$, we deduce from Theorem \ref{one-to-one_SFT} that there is an SFT $Z\subset X$ such that $\phi\vert_Z$ is a conjugacy. Note that $\phi\vert_{X_{\overline{\mathcal{F}}}}$ is not onto: since $010\in \overline{\mathcal{F}}$ and $\Phi^{-1}(100001)=000010000$, $100001$ is not allowed in the image of $\phi\vert_{X_{\overline{\mathcal{F}}}}$ and therefore $\phi\vert_{X_{\overline{\mathcal{F}}}}$ is not onto.
It follows from Theorem~\ref{domain_full_one-to-one} that we can choose $Z$ to be $X_{\mathcal{F}}$. The reader can verify this directly.
\end{exmp}

\begin{rem} \label{f-to-1_but_not_1-to-1}
If a factor code $\phi$ defined on an irreducible SFT $X$ is finite-to-one but not one-to-one, then there is no shift space $Z\subset X$ such that $\phi\vert_Z$ is one-to-one and onto. This follows from the fact that if such a $Z$ exists, then by [LM95, Corrolary 4.4.9], $h_{top}(Z)<h_{top}(X)$, which contradicts [LM95, Corollary 8.1.20]. For a simple example of such a $\phi$ with an unambiguous symbol, see Example \ref{conti_example}.
\end{rem}


\section{Standard Factor Codes Defined on Spoke Graphs} \label{Sec_Spoke}
In this section, we consider a class of factor codes with an unambiguous symbol motivated by the example in
[MPW84, pp. 287-289].




A graph $U$ is called a {\em spoke} if $U$ consists of a state
$B$, a simple path $\gamma^+$ from $B$ to a state $B'\ne B$,
 a simple path $\gamma^-$ from $B'$ to $B$, a simple cycle $C$ including $B'$ s.t.
 $\gamma^+, \gamma^-$ and $C$ are all disjoint (except that they all share the state
 $B'$ and  $\gamma^+, \gamma^-$ share the state $B$). We also allow degenerate spokes with one simple cycle $C$ at $B$, which we indicate by $\gamma^+=\gamma^-=\emptyset.$


A graph $G$ is a {\em spoke graph} if it consists of a central state
 $B$ and finitely many distinct spokes $U_i, i \in T$ such that for any $i\neq j\in T$, $U_i$ and $U_j$ only intersect at $B$. Let $\gamma^+_i, \gamma^-_i,B_i'$ and $C_i$ denote the
  $\gamma^+, \gamma^-, B'$ and $C$ of the spoke $U_i$.  Let $T_0\triangleq \{i\in T:  \gamma^+=\gamma^-=\emptyset\}$ denote the indices of degenerate spokes and $T_1\triangleq T\setminus T_0$ denote the indices of regular spokes. See Figure \ref{spoke_graph1} for an example of a spoke graph with two regular spokes and one degenerate spoke.

\begin{figure}[]
\begin{center}
\begin{tikzpicture}[scale=0.6]
\draw [opacity=0] (-3,2) grid (22,12);
\node [circle, draw, thick] (a) at (12,6) {$B$};
\node [circle, fill, inner sep=1.2pt, opacity=0] (a1) at (11.6,5.51) {};
\node [circle, fill, inner sep=1.2pt, opacity=0] (a2) at (11.3,6.0) {};
\node [circle, fill, inner sep=1.2pt, opacity=0] (a3) at (11.3,6.0) {};
\node [circle, fill, inner sep=1.2pt, opacity=0] (a4) at (11.6,6.51) {};
\node [circle, draw, thick] (b) at (5.5,4) {$B_1'$};
\node [circle, draw, thick, opacity=0] (d1) at (9.8, 4.89) {};
\node [circle, fill, inner sep=1.8pt] at (9.8, 4.89) {};
\node [circle, draw, thick, opacity=0] (d2) at (8.0, 4.27) {};
\node [circle, fill, inner sep=1.8pt] at (8.0, 4.27) {};
\node [circle, draw, thick, opacity=0] (d3) at (8.75,5.15) {};
\node [circle, fill, inner sep=1.8pt] at (8.75,5.15) {};
\node [circle, draw, thick, opacity=0] (d4) at (8.75,6.825) {};
\node [circle, fill, inner sep=1.8pt] at (8.75,6.825) {};
\node [circle, fill, inner sep=1.2pt, opacity=0] (b1) at (6.2,3.65) {};
\node [circle, fill, inner sep=1.2pt, opacity=0] (b2) at (6.2,4.30) {};
\node [circle, draw, thick] (c) at (5.5,8) {$B_2'$};
\node [circle, fill, inner sep=1.2pt, opacity=0] (c1) at (6.2,7.65) {};
\node [circle, fill, inner sep=1.2pt, opacity=0] (c2) at (6.2,8.30) {};
\draw [-stealth, black, thick] (a1) to (d1);
\draw [-stealth, black, thick] (d1) to (d2);
\draw [-stealth, black, thick] (d2) to (b1);

\draw [-stealth, black, thick] (b2) to (d3);
\draw [-stealth, black, thick] (d3) to (a2);

\draw [-stealth, black, thick] (a3) to (d4);
\draw [-stealth, black, thick] (d4) to (c1);

\draw [-stealth, black, thick] (c2) to (a4);

\node [circle, draw, thick, opacity=0] (e1) at (14.5, 8.5) {};
\node [circle, fill, inner sep=1.8pt] at (14.5, 8.5) {};
\node [circle, draw, thick, opacity=0] (e2) at (17, 6) {};
\node [circle, fill, inner sep=1.8pt] at (17, 6) {};
\node [circle, draw, thick, opacity=0] (e3) at (14.5, 3.5) {};
\node [circle, fill, inner sep=1.8pt] at (14.5, 3.5) {};
\draw [-stealth, black, thick] (a) to (e1);
\draw [-stealth, black, thick] (e1) to (e2);
\draw [-stealth, black, thick] (e2) to (e3);
\draw [-stealth, black, thick] (e3) to (a);

\node [circle, fill, inner sep=1.8pt] at (0.75, 2.45) {};
\node [circle, fill, inner sep=1.5pt, opacity=0] (f11) at (1.05, 2.2) {};
\node [circle, fill, inner sep=1.5pt, opacity=0] (f12) at (1.05, 2.7) {};
\node [circle, fill, inner sep=1.5pt, opacity=0] (b3) at (4.8, 3.3) {};
\node [circle, fill, inner sep=1.5pt, opacity=0] (b4) at (4.7, 3.8) {};
\draw [-stealth, black, thick] (b3) to (f11);
\draw [-stealth, black, thick] (f12) to (b4);

\node [circle, draw, thick, opacity=0] (g1) at (1, 7) {};
\node [circle, fill, inner sep=1.8pt] at (1, 7) {};
\node [circle, draw, thick, opacity=0] (g2) at (2.2, 11.2) {};
\node [circle, fill, inner sep=1.8pt] at (2.2, 11.2) {};
\draw [-stealth, black, thick] (c) to (g1);
\draw [-stealth, black, thick] (g1) to (g2);
\draw [-stealth, black, thick] (g2) to (c);
\end{tikzpicture}
\end{center}
\caption{A spoke graph with two regular spokes and one degenerate spoke, where dots denote vertices.}
\label{spoke_graph1}
\end{figure}

Let $\Phi: \mathcal{V}(G) \rightarrow \{0,1\}$ be defined by
\begin{align} \label{unam_map}
\Phi(x)=
\begin{cases}
1 \quad \mbox{if $x_i=B$} \\
0 \quad \mbox{otherwise}.
\end{cases}
\end{align}
For a block $x_1 \cdots x_m$ with $x_i\in \mathcal{V}(G)$ for any $1\leq i\leq m$, we use $\Phi(x_1 \cdots x_m)$ to denote $\Phi(x_1)\Phi(x_2) \cdots \Phi(x_m)$.

Consider the factor code $\phi: \widehat{X_G} \rightarrow Y\subset X_{[2]}$ induced by $\Phi$. We call $\phi$ the {\em standard factor code} on $G$.
The image $Y$ of $\phi$ is a
 gap shift with gap set
 $$
 S:= \cup_{i\in T} S_i 
 $$
 where
 \begin{align*}
 S_i&:=
 \begin{cases}
  \{d_i -1 \}\qquad \qquad \qquad \qquad \qquad \qquad \quad \  \mbox{if $i\in T_0$}
 \medskip
 \\
 \{n \in \mathbb{Z}_{\geq 0}: n=a_i  \ (\bmod \ d_i), n \ge m_i\} \quad  \mbox{if $i\in T_1$},
 \end{cases} \\
  d_i &:= |C_i| \quad \qquad \qquad \quad  \ \ \ i\in T_0\cup T_1, \\
 m_i &:= |\gamma^+_i| + |\gamma^-_i|-1 \quad \ \ \  i\in T_1, \\
   a_i&: = m_i \bmod d_i \qquad \qquad   0\leq a_i\leq d_i-1.
 \end{align*}
Let $D = l.c.m.(\{d_i: i\in T_1\})$ and $n(i):= D/{d_i}$. It is then immediate that for $i\in T_1$,
$$
S_i = \{n\in \mathbb{Z}_{\geq 0}: n=b_i^{(j)} \  (\bmod \  D), 1\leq j\leq n(i), n\geq m_i \},
$$
where $b_i^{(j)}:=a_i+(j-1) d_i$ and $0\leq b_i^{(j)}<D$ for any $i\in T_1$ and any $1\leq j\leq n(i)$. 
For each $i\in T_1$, denote
$$
K_i := \{b_i^{(1)}, b_i^{(2)}, \cdots, b_i^{n(i)}\} \quad \mbox{and} \quad K_i \bmod D:= \cup_{j=1}^{n(i)}\{n: n=b_i^{(j)}\ (\bmod \ D)\}.
$$
Then the gap set $S$ can be expressed by
$$
S= (\cup_{i\in T_1} \{n \in \mathbb{Z}_{\geq 0}:  n\in K_i \bmod D, n\geq m_i \}) \cup \{\vert C_i \vert -1: i\in T_0\}.
$$

\section{Characterization of the Finite-to-one Condition for Standard Factor Codes on Spoke Graphs} \label{P2_for_spoke}

Here, we characterize P2 for standard factor codes on spoke graphs.

\begin{thm} \label{spoke_three_eq}
Let $G$ be a spoke graph and $\phi$ be the standard factor code on $G$.
Then, the following are equivalent:
\begin{enumerate}
\item[(1)] There is a $W\subset T_1$ such that
$\cup_{i\in W} K_i=\cup_{i\in T_1} K_i$ and $\{K_i: i\in W\}$ are pairwise disjoint.
\item[(2)] There is an irreducible SFT $Z\subset \widehat{X_G}$ such that $\phi\vert_{Z}$ is almost invertible and onto  $Y$.
\item[(3)] There is an irreducible SFT $Z\subset \widehat{X_G}$ such that $\phi\vert_{Z}$ is finite-to-one and onto $Y$.
\end{enumerate}
\noindent({Note 1:}  If $d_i \ge 2$ for all $i \in T_0\cup T_1$, then the vertex shift of a spoke graph does not have a fixed point. If $T_1\neq \emptyset$, then the image $Y$ always has a fixed point $0^\infty$.  So, under these assumptions, $\phi\vert_Z$ cannot be a conjugacy.\\
Note 2: In (2) and (3), it is not necessary to assume that $Z$ is irreducible since otherwise we can replace $Z$ with an irreducible component with maximal topological entropy.)
\end{thm}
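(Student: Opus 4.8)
The plan is to prove the cyclic chain of implications $(3)\Rightarrow(2)$ is immediate from the fact that an almost invertible code on an irreducible SFT is finite-to-one (\cite{LM95}, Proposition 9.2.2), $(2)\Rightarrow(1)$, and $(1)\Rightarrow(2)$; since $(2)\Rightarrow(3)$ is likewise immediate (almost invertible codes on irreducible SFTs are finite-to-one), this closes the loop. The combinatorial heart of the theorem is the translation of the dynamical statements about $Z$ and $\phi|_Z$ into the number-theoretic condition in (1) about the residue classes $K_i$ mod $D$.

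For $(1)\Rightarrow(2)$, suppose such a $W\subset T_1$ exists. The idea is to build $Z$ explicitly as an SFT inside $\widehat{X_G}$ by keeping, for each $i\in W$, enough first-return cycles through $B$ of lengths in $S_i$ to realize all gaps in $S$, but arranging the choices so that every block of $Y$ has a unique pre-image up to the magic-word criterion. Concretely, for each $i \in W$ the cycles available through spoke $U_i$ are $\gamma_i^+ C_i^{\ell}\gamma_i^-$ of length $m_i + \ell d_i$; since $\{K_i : i\in W\}$ are pairwise disjoint and their union is $\cup_{i\in T_1}K_i$, each residue class mod $D$ that appears in $S$ is ``owned'' by exactly one spoke in $W$, so for each sufficiently large gap length $g\in S$ there is exactly one $i\in W$ and one $\ell$ with $g = m_i + \ell d_i$. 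For the finitely many small gaps and the degenerate-spoke gaps $|C_i|-1$, one includes the corresponding unique first-return cycles. Restricting $\widehat{X_G}$ to the sub-SFT $Z$ supported on exactly these cycles (a finite list of forbidden transitions, hence an SFT), one checks that $\phi|_Z$ is onto $Y$ and that a block $10^{g}1$ in $Y$ with $g$ large serves as a magic word: its pre-image in $Z$ is forced to be $B \cdot V(\gamma_i^+ C_i^\ell \gamma_i^-)$ for the unique $(i,\ell)$, so the degree is $1$. One then passes to an irreducible component of maximal entropy as in Note 2. The main technical care here is handling the boundary cases — the small gaps below the various $m_i$, overlapping residues among spokes {\em not} in $W$, and the degenerate spokes — so that $Z$ remains onto $Y$ while staying almost invertible; this is the step I expect to be the most delicate, though it is bookkeeping rather than deep.

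For $(2)\Rightarrow(1)$, assume $Z$ is an irreducible SFT with $\phi|_Z$ finite-to-one (hence no point diamonds, \cite{LM95}, Theorem 8.1.16) and onto $Y$. First-return cycles through $B$ in $Z$ must have lengths lying in $S$, and onto-ness forces, for each residue class mod $D$ occurring in $S$, the presence in $Z$ of arbitrarily long first-return cycles in that class; each such long cycle, by the structure of the spoke graph, must eventually wind around exactly one cycle $C_i$, so it lies in a single spoke $U_i$ with $i\in T_1$ and its length lies in $K_i$ mod $D$. Let $W$ be the set of $i\in T_1$ that carry arbitrarily long first-return cycles in $Z$; then $\cup_{i\in W}(K_i \bmod D)$ covers all large elements of $\cup_{i\in T_1}K_i \bmod D$, giving $\cup_{i\in W}K_i = \cup_{i\in T_1}K_i$. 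The pairwise-disjointness is where the no-diamond hypothesis is used: if $i\ne j$ in $W$ shared a residue $b\in K_i\cap K_j$, then for large $\ell$ both $\gamma_i^+ C_i^{\ell_i}\gamma_i^-$ and $\gamma_j^+C_j^{\ell_j}\gamma_j^-$ would be first-return cycles in $Z$ of the same length $g$, and since both begin and end at $B$ with the same $\Phi$-label $1 0^{g-1} 1$, concatenating them appropriately against a common context would produce a graph diamond, contradicting that $\phi|_Z$ is finite-to-one. Assembling these observations yields condition (1), and the proof is complete.
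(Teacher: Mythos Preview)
Your opening has the trivial implication backwards: almost invertible $\Rightarrow$ finite-to-one gives $(2)\Rightarrow(3)$, not $(3)\Rightarrow(2)$, so as written your chain never establishes that (3) implies anything. This is easily repaired, since the argument you label ``$(2)\Rightarrow(1)$'' uses only the finite-to-one hypothesis and is therefore really a proof of $(3)\Rightarrow(1)$; with that relabeling the cycle $(1)\Rightarrow(2)\Rightarrow(3)\Rightarrow(1)$ closes. Your construction for $(1)\Rightarrow(2)$ is essentially the paper's (the paper packages it as a conjugacy from an auxiliary spoke graph $H$ onto $Z$, which makes the SFT property of $Z$ transparent).

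The substantive gap is in the disjointness step. You assert that if $i\ne j$ lie in $W$ with $K_i\cap K_j\ne\emptyset$, then two equal-length first-return cycles through spokes $i$ and $j$, ``concatenated against a common context,'' yield a diamond. But $Z$ is only an $M$-step SFT, not a vertex shift: the two cycles begin with different $M$-blocks $B\,V(\gamma_i^+)\ldots$ versus $B\,V(\gamma_j^+)\ldots$ and end with different $M$-blocks as well, so there is no a priori reason any single left-context in $Z$ can be followed by both, nor any single right-context preceded by both. The finite-to-one hypothesis does eventually furnish such a common context, but not for free. The paper handles exactly this point by fixing a magic word $u$ for the degree-$k$ code $\phi|_Z$, enumerating the $k$ possible $M$-blocks $a^{(1)},\dots,a^{(k)}$ at the magic coordinate, and invoking the permutation structure of pre-images of words $uxu$ ([LM95], Proposition~9.1.9): two long cycles with the same label, each sandwiched between pre-images of $u$ and starting from the same branch $a^{(1)}$, are forced to end in the \emph{same} branch $a^{(\tau(1))}$, and those shared $M$-blocks are precisely the common context that allows extension to a point diamond. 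Accordingly the correct $W$ is the set $F^{(1)}$ of spokes reachable (with at least $L$ loops) after branch $1$, not simply ``all spokes carrying arbitrarily long first-return cycles in $Z$.''
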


\begin{proof}
{\em (1)} $\Rightarrow$ {\em (2)}:
Suppose there is a set $W\subseteq T_1$ s.t. $\cup_{i \in W} K_i  = \cup_{i \in T_1} K_i$
and $\{K_i: i \in W\}$ are pairwise disjoint.

Denote
\begin{align*}
S^{(0)}&=\cup_{i\in W} \{n \in \mathbb{Z}_{\geq 0}:  n\in K_i \bmod D, n\geq m_i \}, \\
S^{(1)}&=\cup_{i\in T_1} \{n \in \mathbb{Z}_{\geq 0}:  n\in K_i \bmod D, n\geq m_i \},\\
S^{(2)}&=\{\vert C_i \vert -1: i\in T_0\}.
\end{align*}
We first construct a new graph $H$ from the graph $G$ through the following three steps:
\begin{enumerate}
\item[$(A)$] Let $H$ be the graph consisting of the central state $B\in \mathcal{V}(G)$ and all the spokes $U_i\subset G$ with $i\in W$;
\item[$(B)$] For each $r\in S^{(1)} \setminus S^{(0)}$, add to $H$ a simple cycle, denoted $C(r)$, of length $r+1$ starting and ending with $B$;
\item[$(C)$] For each $s\in S^{(2)} \setminus S^{(1)}$, choose an $i(s)\in T_0$ such that $\vert C_i \vert = s+1$. Add the degenerate spoke $U_{i(s)}$ to $H$.
\end{enumerate}
See Figure \ref{cons_graph_H} for an example of the construction of $H$.

Let $H_1, H_2, H_3$ denote the subgraph consisting of spokes added to $H$ in Step $(A)$, $(B)$ and $(C)$, respectively. It is worth noting that any $r\in S^{(1)}\setminus S^{(0)}$ corresponds to a ``gap" in regular spokes of $G$ that is missing from $\{U_i: i\in W\}$, and any $s\in S^{(2)}\setminus S^{(1)}$ corresponds to a ``gap" in degenerate spokes of $G$ that is missing from $\{U_i: i\in T_1\}$.

\begin{figure}[]
\begin{center}
\begin{tikzpicture}[scale=0.6]
\draw [opacity=0] (0,-2) grid (25,14);
\node [circle, draw, thick] at (6,5) {$B$};
\node [circle, fill, inner sep=1.8pt, opacity=0] (B1) at (5.6,5.4) {};
\node [circle, fill, inner sep=1.8pt, opacity=0] (B2) at (6.4,5.4) {};
\node [circle, fill, inner sep=1.8pt, opacity=0] (B3) at (5.6,4.6) {};
\node [circle, fill, inner sep=1.8pt, opacity=0] (B4) at (6.5,4.7) {};

\node [circle, fill, inner sep=1.8pt, opacity=1] (a) at (4,8) {};
\node [circle, fill, inner sep=1.8pt, opacity=1] (b) at (2,9) {};
\node [circle, fill, inner sep=1.8pt, opacity=1] (c) at (3,11) {};
\node [circle, fill, inner sep=1.8pt, opacity=1] (d) at (5,10) {};

\node [circle, fill, inner sep=1.8pt, opacity=1] (e) at (6.8,7) {};
\node [circle, fill, inner sep=1.8pt, opacity=1] (ee) at (7.5,8.2) {};
\node [circle, fill, inner sep=1.8pt, opacity=1] (eee) at (8.5,9.3) {};
\node [circle, fill, inner sep=1.8pt, opacity=1] (f) at (10,10) {};
\node [circle, fill, inner sep=1.8pt, opacity=1] (g) at (12,13.5) {};
\node [circle, fill, inner sep=1.8pt, opacity=1] (gg) at (9.7,8.5) {};
\node [circle, fill, inner sep=1.8pt, opacity=1] (ggg) at (8.8,7.2) {};
\node [circle, fill, inner sep=1.8pt, opacity=1] (gggg) at (7.8,6.2) {};

\node [circle, fill, inner sep=1.8pt, opacity=1] (h) at (4.2,3.8) {};
\node [circle, fill, inner sep=1.8pt, opacity=1] (hh) at (3.2,2.8) {};
\node [circle, fill, inner sep=1.8pt, opacity=1] (hhh) at (2.3,1.5) {};
\node [circle, fill, inner sep=1.8pt, opacity=1] (i) at (2,0) {};
\node [circle, fill, inner sep=1.8pt, opacity=1] (ii) at (3.5,0.7) {};
\node [circle, fill, inner sep=1.8pt, opacity=1] (iii) at (4.5,1.8) {};
\node [circle, fill, inner sep=1.8pt, opacity=1] (iiii) at (5.2,3) {};

\node [circle, fill, inner sep=1.8pt, opacity=1] (j) at (7.5, 2.5) {};
\node [circle, fill, inner sep=1.8pt, opacity=1] (k) at (9.2,1) {};
\node [circle, fill, inner sep=1.8pt, opacity=1] (l) at (8.4, 3.2) {};

\draw [-stealth, black, thick] (B1) to [out=150, in=280] (a);
\draw [-stealth, black, thick] (a) to  (b);
\draw [-stealth, black, thick] (b) to  (c);
\draw [-stealth, black, thick] (c) to  (d);
\draw [-stealth, black, thick] (d) to  (a);
\draw [-stealth, black, thick] (a) to [out=350, in=90] (B1);

\draw [-stealth, black, thick] (B2) to [out=90, in=250] (e);
\draw [-stealth, black, thick] (e) to [out=70, in=230] (ee);
\draw [-stealth, black, thick] (ee) to [out=60, in=210] (eee);
\draw [-stealth, black, thick] (eee) to [out=50, in=190] (f);
\draw [-stealth, black, thick] (f) to [out=85, in=210] (g);
\draw [-stealth, black, thick] (g) to [out=270, in=35] (f);
\draw [-stealth, black, thick] (f) to [out=270, in=65] (gg);
\draw [-stealth, black, thick] (gg) to [out=250, in=50] (ggg);
\draw [-stealth, black, thick] (ggg) to [out=245, in=35] (gggg);
\draw [-stealth, black, thick] (gggg) to [out=225, in=10] (B2);

\draw [-stealth, black, thick] (B3) to [out=190, in=45] (h);
\draw [-stealth, black, thick] (h) to [out=205, in=55] (hh);
\draw [-stealth, black, thick] (hh) to [out=230, in=70] (hhh);
\draw [-stealth, black, thick] (hhh) to [out=255, in=80] (i);
\draw [-stealth, black, thick] (i) to [out=10, in=230] (ii);
\draw [-stealth, black, thick] (ii) to [out=30, in=240] (iii);
\draw [-stealth, black, thick] (iii) to [out=40, in=250] (iiii);
\draw [-stealth, black, thick] (iiii) to [out=70, in=270] (B3);

\draw [-stealth, black, thick] (B4) to [out=280, in=130] (j);
\draw [-stealth, black, thick] (j) to [out=300, in=155] (k);
\draw [-stealth, black, thick] (k) to [out=100, in=300] (l);
\draw [-stealth, black, thick] (l) to [out=120, in=350] (B4);


\node [circle, draw, thick] at (19,5) {$B$};
\node [circle, fill, inner sep=1.8pt, opacity=0] (BB1) at (18.6,5.4) {};
\node [circle, fill, inner sep=1.8pt, opacity=0] (BB2) at (19.4,5.4) {};
\node [circle, fill, inner sep=1.8pt, opacity=0] (BB3) at (18.6,4.6) {};
\node [circle, fill, inner sep=1.8pt, opacity=0] (BB4) at (19.5,4.7) {};

\node [circle, fill, inner sep=1.8pt, opacity=1] (a1) at (17,8) {};

\node [circle, fill, inner sep=1.8pt, opacity=1] (e1) at (19.8,7) {};
\node [circle, fill, inner sep=1.8pt, opacity=1] (ee1) at (20.5,8.2) {};
\node [circle, fill, inner sep=1.8pt, opacity=1] (eee1) at (21.5,9.3) {};
\node [circle, fill, inner sep=1.8pt, opacity=1] (f1) at (23,10) {};
\node [circle, fill, inner sep=1.8pt, opacity=1] (g1) at (25,13.5) {};
\node [circle, fill, inner sep=1.8pt, opacity=1] (gg1) at (22.7,8.5) {};
\node [circle, fill, inner sep=1.8pt, opacity=1] (ggg1) at (21.8,7.2) {};
\node [circle, fill, inner sep=1.8pt, opacity=1] (gggg1) at (20.8,6.2) {};

\node [circle, fill, inner sep=1.8pt, opacity=1] (h1) at (17.3,3.8) {};
\node [circle, fill, inner sep=1.8pt, opacity=1] (hh1) at (16.5,2.6) {};
\node [circle, fill, inner sep=1.8pt, opacity=1] (i1) at (16,1) {};
\node [circle, fill, inner sep=1.8pt, opacity=1] (ii1) at (17.4,2.0) {};
\node [circle, fill, inner sep=1.8pt, opacity=1] (iii1) at (18.3,3.2) {};

\node [circle, fill, inner sep=1.8pt, opacity=1] (j1) at (20.5, 2.5) {};
\node [circle, fill, inner sep=1.8pt, opacity=1] (k1) at (22.2,1) {};
\node [circle, fill, inner sep=1.8pt, opacity=1] (l1) at (21.4, 3.2) {};

\draw [-stealth, black, thick] (BB1) to [out=150, in=280] (a1);
\draw [-stealth, black, thick] (a1) to [out=350, in=90] (BB1);

\draw [-stealth, black, thick] (BB2) to [out=90, in=250] (e1);
\draw [-stealth, black, thick] (e1) to [out=70, in=230] (ee1);
\draw [-stealth, black, thick] (ee1) to [out=60, in=210] (eee1);
\draw [-stealth, black, thick] (eee1) to [out=50, in=190] (f1);
\draw [-stealth, black, thick] (f1) to [out=85, in=210] (g1);
\draw [-stealth, black, thick] (g1) to [out=270, in=35] (f1);
\draw [-stealth, black, thick] (f1) to [out=270, in=65] (gg1);
\draw [-stealth, black, thick] (gg1) to [out=250, in=50] (ggg1);
\draw [-stealth, black, thick] (ggg1) to [out=245, in=35] (gggg1);
\draw [-stealth, black, thick] (gggg1) to [out=225, in=10] (BB2);

\draw [-stealth, black, thick] (BB3) to [out=190, in=45] (h1);
\draw [-stealth, black, thick] (h1) to [out=225, in=65] (hh1);
\draw [-stealth, black, thick] (hh1) to [out=235, in=80] (i1);
\draw [-stealth, black, thick] (i1) to [out=10, in=230] (ii1);
\draw [-stealth, black, thick] (ii1) to [out=40, in=240] (iii1);
\draw [-stealth, black, thick] (iii1) to [out=70, in=270] (BB3);

\draw [-stealth, black, thick] (BB4) to [out=280, in=130] (j1);
\draw [-stealth, black, thick] (j1) to [out=300, in=155] (k1);
\draw [-stealth, black, thick] (k1) to [out=100, in=300] (l1);
\draw [-stealth, black, thick] (l1) to [out=120, in=350] (BB4);

\node at (5, -2) [coordinate, draw, fill=black, label=above: The graph $G$] {};
\node at (19, -2) [coordinate, draw, fill=black, label=above: The graph $H$] {};

\end{tikzpicture}
\end{center}
\caption{A example of $G$ and $H$, where dots denote vertices.}
\label{cons_graph_H}
\end{figure}

The following properties are immediate from the construction of $H$:
\begin{itemize}
\item[$(a)$] $H$ is a spoke graph. It consists of the central state $B$ and several spokes intersecting at $B$, where spokes in $H_1$ are regular spokes and spokes in $H_2\cup H_3$ are degenerate spokes.
\item[$(b)$] $H_1\cup H_3$ is a subgraph of $G$;
\item[$(c)$] If $\eta_1$ and $\eta_2$ are two different simple cycles at $B$, then $\vert\eta_1\vert \neq \vert\eta_2\vert.$
\end{itemize}

Now, define a one-block map $\Psi: \mathcal{V}(H)\rightarrow \mathcal{V}(G)$ as follows:
\begin{itemize}
\item For $v\in \mathcal{V}(H_1\cup H_3)$, let $\Psi(v)=v$;
\item 
For any $r\in S^{(1)}\setminus S^{(0)}$, choose a cycle $\widetilde{C}(r)$ in $G$ starting and ending with $B$  with no $B$ in its interior such that $\vert \widetilde{C}(r) \vert= \vert C(r) \vert$. Define
$$
\Psi(V(C(r))):= V(\widetilde{C}(r)).
$$
\end{itemize}
Note that for any two distinct vertices $v_1, v_2\in \mathcal{V}(H)$,
$\Psi(v_1)=\Psi(v_2)$ only if there exist $r_1, r_2\in S^{(1)}\setminus S^{(0)}$ with $r_1 \neq r_2$ such that $v_1\in V(C(r_1))$ and $v_2\in V(C(r_2))$, where $C(r_1)$ and $C(r_2)$ are constructed in Step $(B)$.

Let $\psi:\widehat{X_H}\rightarrow \widehat{X_G}$ be the sliding block code induced by $\Psi$ and define $Z: =\psi(\widehat{X_H})$. Note that any point $z\in Z$ is a concatenation of strings of the form
\begin{equation} \label{four_blocks}
Bu_1 u_2 \cdots u_k B,\quad \cdots v_{-3}v_{-2}v_{-1} B,\quad Bw_1w_2w_3\cdots,\quad \cdots i_{-2} i_{-1} i_0 i_1 i_2 \cdots
\end{equation}
where $u_j$'s, $v_j$'s, $w_{j}$'s and $i_j$'s are vertices in $G$ distinct from $B$. Thus, To show that $\psi$ is one-to-one, it suffices to show that any string in (\ref{four_blocks}) has a unique $\Psi$-pre-image, and we prove this by considering the following cases:
\begin{enumerate}
\item[(1)] Any allowed block of the form $Bu_1 u_2 \cdots u_k B$ in $\widehat{X_G}$ must be the $\Psi$-image of some block of the form $B x_1 x_2 \cdots x_k B$ with $x_i\in \mathcal{V}(H)$ for any $1\leq i\leq k$. Noting from Property $(c)$ that each $B x_1 x_2 \cdots x_k B$ is uniquely determined by its length, we conclude that the $\Psi$-pre-image of $Bu_1 u_2 \cdots u_k B$ is unique.
\item[(2)] For simplicity, among the infinite paths in (\ref{four_blocks}), we consider only those of the form $\cdots v_{-3}v_{-2}v_{-1} B$ in $\widehat{X_G}$. Such a  string must be the $\Psi$-image of some string of the form $\cdots x_{-3} x_{-2} x_{-1} B$ with $x_i\in \mathcal{V}(H_1)$. Since $\Psi$ is the identity map on $\mathcal{V}(H_1\cup H_3)$, $\cdots v_{-3}v_{-2}v_{-1} B$ has a unique $\Psi$-pre-image.
\end{enumerate}

Let $Z:=\psi(\widehat{X_H})$. Then $Z$ is an irreducible SFT because it is conjugate to
$\widehat{X_H}$. 
We now prove that $\phi\vert_Z$ is almost invertible and onto $Y$.
Note that by definition $\Phi\circ \Psi$ maps the central state $B$ to $1$ and maps all other vertices in $H$ to $0$.  So $\phi\circ \psi$ is the standard factor code on the
spoke graph $H$.

To see that $\phi\circ \psi$ is onto, first note that the image $(\phi\circ\psi)(\widehat{X_{H}})$ is a gap shift with gaps of the form
\begin{align*}
S' &:= S^{(0)} \cup (S^{(1)} \setminus S^{(0)}) \cup (S^{(2)}\setminus S^{(1)}) \\
&=S^{(0)} \cup S^{(1)} \cup S^{(2)}\\
&=S^{(1)} \cup S^{(2)}
\end{align*}
 where we use the fact that $S^{(0)}\subset S^{(1)}$ in the last equation.
Since $\cup_{i \in W} K_i  = \cup_{i \in T_1} K_i$, we have $S' =S$ where $S$ is such that $Y$ is an $S$-gap shifts. Therefore $\phi\circ\psi$ is onto.

 We now show that $\phi\circ \psi$ is finite-to-one. We first note from the construction of $H$ that for any $t\in S$, there is a unique cycle of length $t+1$ in $H$ starting and ending with $B$, whose interior does not contain $B$. Hence,
for any $t\in S$, there is a unique path in $H$ whose image under $\Phi\circ \Psi$ is $10^t1$. This implies that $\phi\circ \psi$ has no graph diamond and therefore it is finite-to-one.

Since the central state $B$ is the only vertex in $H$ whose $(\Phi\circ\Psi)$-image is $1$,
and since $\phi\circ\psi$ is a finite-to-one 1-block code on a 1-step SFT, its  degree is $1$ (by [LM95, Theorem 9.1.11 (3) and Proposition 9.1.12]) and therefore it is almost invertible.

Finally, since $\phi\circ\psi$ is almost invertible and onto Y, and $\psi$ is a conjugacy from $\widehat{X_H}$ to $Z$, we conclude that $\phi\vert_{Z}: Z\rightarrow Y$ is almost invertible and onto.
\medskip

{\em (2)} $\Rightarrow$ {\em (3)}: As we said in Section 2, any almost invertible factor code
on an irreducible SFT is finite-to-one [LM95, Proposition 9.2.2].


{\em (3)} $\Rightarrow$ {\em (1)}: Suppose that there is an irreducible SFT $Z\subset X$ such that $\phi\vert_{Z}$ is finite-to-one and onto. Let $k$ be the degree of $\phi\vert_{Z}$ and $L$ be the maximum length of words in a forbidden list of blocks from $X$ that defines $Z$. Then, there exist a word of the form $u:=0^{e_1}10^{e_2}1\cdots 10^{e_n}$ such that each $e_i \in S$, an integer $L\leq M\leq \vert u\vert$, and an index $1\leq j\leq \vert u \vert-M+1$ such that
the set
$$
E:=\{v_{[j, j+M-1]}:  v\in\mathcal{B}(Z), \Phi(v) =u\}
$$
has cardinality $k$.
Note that $u$ is a magic word and $u_{[j, j+M-1]}$ is the corresponding magic block.

For notational convenience, in the remainder of this proof, for any block $w$ with length $\vert u \vert $, we use the following notation:
\begin{align*}
\overline{w}:= w_{[1, j-1]}, \quad \widetilde{w}:=w_{[j, j+M-1]}, \quad \widehat{w}:=w_{[j+M, \vert u\vert]}
\end{align*}
where $u$, $j$ and $M$ are defined as above.

Denote elements in $E$ by $a^{(1)}, a^{(2)}, \cdots, a^{(k)}$ and for any $1\leq l \leq k$, define
$$
B^{(l)}:=\{v\in \mathcal{B}(Z): \Phi(v)=u, \widetilde{v}=a^{(l)}\} \quad \mbox{ and } \quad
R:=\cup_{1\leq l\leq k} B^{(l)}.
$$
Note that $R$ is the set of all $\phi\vert_{Z}$-pre-images of $u$.
By a higher block recoding similar to [LM95, Proposition 9.1.7], the following observation follows from [LM95, Proposition 9.1.9 (part 2)].
\medskip

\noindent{\em Observation 1: }
Let $u x u$ be a word in $\mathcal{B}(Y)$ and let $A: =\{z\in \mathcal{B}(Z): \Phi(z)= uxu\}$. Note that any element in $A$ is of the form $v^{(l)} w v^{(l')}$ where $1\leq l ,l' \leq k$ and $v^{(l)}\in B^{(l)}, v^{(l')}\in B^{(l')}$. Then, there exists a permutation $\tau=\tau_{uxu}$ of $\{1,2,\cdots k\}$ such that for any pair $(l, l')$, $v^{(l)} w v^{(l')}\in A$ for some $w$ only if $l'=\tau(l)$.
\medskip


For any $1\leq l\leq k$, define
\begin{align*}
F^{(l)}:=\{i\in T_1: &vV(\gamma_i^+ (C_i)^L \gamma_i^-)w \in \mathcal{B}(Z) \mbox{ for some $v\in B^{(l)}$ and some $w\in R$}
\}
\end{align*}
to be the index set of regular spokes that can follow some pre-images of $u$ in $B^{(l)}$ and precede some pre-images of $u$ in $R$.
We claim that for any $1\leq l\leq k$, $\{K_i: i\in F^{(l)}\}$ are pairwise disjoint and $\cup_{i\in F^{(l)}} K_i =\cup_{i\in T_1} K_i.$ We assume WLOG that $l=1$ in the following.


To show $\{K_i: i\in F^{(1)}\}$ are pairwise disjoint, we suppose to the contrary that there exists $f\in K_{i_1} \cap K_{i_2}$ for some $i_1, i_2\in F^{(1)}$ with $i_1\neq i_2$. Choose $n(f)= f (\bmod \ D)$ such that $n(f)\geq \max\{d_{i_1} L+m_{i_1}, d_{i_2} L+m_{i_2}\}$. Then, $n(f)\in S$ and according to the definition of $F^{(1)}$, there are $v, x\in B^{(1)}$, $w\in B^{(l_1)}$, $y\in B^{(l_2)}$ for some $1\leq l_1, l_2 \leq k$ such that
\begin{align}
\Phi(vV(\gamma_{i_1}^+ (C_{i_1})^{(n(f)-m_{i_1})/d_{i_1}}\gamma_{i_1}^- )w)&=\Phi(xV( \gamma_{i_2}^+ (C_{i_2})^{(n(f)-m_{i_2})/d_{i_2}}\gamma_{i_2}^- )y)=u10^{n(f)}1 u,  \notag \\
\mbox{ and } \qquad \widetilde{v}=\widetilde{x}=a^{(1)}&, \quad \widetilde{w}=a^{(l_1)}, \quad \widetilde{y}=a^{(l_2)}. \notag
\end{align}
Then, we infer from Observation 1 that $l_1=l_2$ and therefore $\widetilde{w}=\widetilde{y}=a^{(l_1)}$.
Now, the two words
\begin{align} \label{two_blocks_per}
\widetilde{v} \widehat{v}V( \gamma_{i_1}^+ (C_{i_1})^{(n(f)-m_{i_1})/d_{i_1}}\gamma_{i_1}^-) \overline{w} \widetilde{w} \quad \mbox{and} \quad \widetilde{x} \widehat{x} V(\gamma_{i_2}^+ (C_{i_2})^{(n(f)-m_{i_2})/d_{i_2}}\gamma_{i_2}^-) \overline{y} \widetilde{y}
\end{align}
are both $\phi\vert_{Z}$-pre-images of $\widetilde{u} \widehat{u}1 0^{n(f)}1 \overline{u}\widetilde{u}$, and they both start with $a^{(1)}$ and end with $a^{(l_1)}$. Since $a^{(1)}$ and $a^{(l_1)}$ both have length $M$, which is no less than $L$, we deduce that the two words in (\ref{two_blocks_per}) can be extended to a point diamond, contradicting the fact that $\phi\vert_Z$ is finite-to-one. 

To show $\cup_{i\in F^{(1)}} K_i =\cup_{i\in T_1} K_i,$
assume to the contrary that there is a $g\in \cup_{i\in T_1} K_i$ but $g\notin \cup_{i\in F^{(1)}} K_i$. Choose $n(g):= g\  (\bmod \ D)$ such that $n(g) > \max\{ d_i: i\in T_0\} $ and $n(g)\geq \max\{d_i L + m_i: i\in T_1\}$. Then, $n(g)\in S$ and
we deduce from the definition of $F^{(1)}$ that the set
$$
Q:=\{z_{[j, j+M-1]}: z\in \mathcal{B}(Z),  \Phi(z)= u 10^{n(g)}1 u\}
$$
does not contain $a^{(1)}$. Noting that $Q\subset \{a^{(1)}, a^{(2)}, \cdots, a^{(k)}\}$ since $u$ is a magic word, the cardinality of $Q$ is at most $k-1$. This contradicts the fact that $\phi\vert_{Z}$ has degree $k$, and therefore $\cup_{i\in F^{(1)}} K_i =\cup_{i\in T_1} K_i$.

Now let $W= F^{(1)}$. Then, we immediately infer from above that $W$ is the desired set and therefore complete the proof.
\end{proof}

\begin{rem}
Our proof indeed shows that conditions $(2)$ and $(3)$ in Theorem \ref{spoke_three_eq} are equivalent for any 1-block factor code with an unambiguous symbol defined on a 1-step SFT.
\end{rem}

\begin{rem} \label{conti_example}
Let $G$ be the graph in Figure \ref{single_spoke} where $B$ is the central state. Let $\phi$ be the standard factor code on $G$. Then, one verifies that $\Phi$ (which generates $\phi$) has no graph diamond and so $\phi$ is finite-to-one; on the other hand, $\phi$ is not one-to-one: both $(V_1V_2)^\infty$ and $(V_2V_1)^{\infty}$ are  preimages of $0^\infty$. In this case, there is no subshift $Z\subset \widehat{X_G}$ such that $\phi\vert_Z$ is one-to-one and onto (see discussion in Remark \ref{f-to-1_but_not_1-to-1}).

\begin{figure}[H]
\begin{center}
\begin{tikzpicture}[scale=0.6]
\node [circle, draw, thick] (a) at (7,1) {$B$};
\node [circle, draw, thick] (b) at (11,1) {$V_1$};
\node [circle, draw, thick] (c) at (15,1) {$V_2$};
\draw [-stealth, black, thick] (a) to [out=30, in=150] (b) ;
\draw [-stealth, black, thick] (b) to [out=210, in=330] (a);
\draw [-stealth, black, thick] (b) to [out=30, in=150] (c);
\draw [-stealth, black, thick] (c) to [out=210, in=330] (b);
\node at (9, 1.7) [coordinate, draw, fill=black, label=above: $\gamma^+$] {};
\node at (9, 0.3) [coordinate, draw, fill=black, label=below: $\gamma^-$] {};
\end{tikzpicture}
\end{center}
\caption{The graph $G$, which is a representation of $X_{\mathcal{F}}$.}
\label{single_spoke}
\end{figure}

\end{rem}

\begin{exmp}
Let $G$ be the 3-spoke graph defined by
$$d_1=d_3=6, \quad d_2=3, \quad m_1=m_2=1, \quad m_3=4$$
and $\phi$ be the standard factor code on $G$.
Then, $T_0=\emptyset$, $T_1=\{1,2,3\}$,  $D=l.c.m.(d_1,d_2,d_3)=6$ and
$$
K_1=\{1\}, \quad K_2=\{1,4\}, \quad K_3=\{4\}.
$$
Here the image $Y$ of $\phi$ is an $S$-gap shift with
$$S=\{n\in \mathbb{Z}_{\geq 0}: n=1 \bmod 3\}.$$

There are two ways to choose $W$:

(1) $W=\{1,3\}$. It can be readily checked that $\cup_{i\in W} K_i = \cup_{i\in T_1} K_i$ and $K_1 \cap K_3 = \emptyset$.  So, by Theorem \ref{spoke_three_eq} there is an SFT $Z \subset
\widehat{X_G}$ such that $\phi_Z$ is finite-to-one and onto $Y$.  In this case,
the proof chooses
$Z$ to be $\widehat{X_{U_1 \cup U_3}}$.

(2) $W=\{2\}$. Here, the proof of Theorem \ref{spoke_three_eq} chooses $Z$ to be
$\widehat{X_{U_2}}$.

This shows that there are two irreducible Markov measures $\nu_1$ and $\nu_2$, with $\nu_1$ supported on $\widehat{X_{U_1\cup U_3}}$ and $\nu_2$ ssupported on $\widehat{X_{U_2}}$, that both achieve the capacity of the channel given by the standard factor code on ${G}$.  Using $\nu_1$ and $\nu_2$, one can construct a fully supported irreducible Markov measure on $\widehat{X_G}$ that achieves capacity.
\end{exmp}

\begin{exmp}
Let $G$ be the 4-spoke graph defined by
$$
m_1=m_2=m_3=1, \quad m_4=10, \quad d_1=2,\quad d_2=3, \quad d_3=4, \quad d_4=6
$$
and $\phi$ be the standard factor code on $G$.
Then, $$T_0=\emptyset, \ \ T_1=\{1,2,3,4\}, \ \  D=l.c.m. (d_1, d_2,d_3,d_4)= 12$$ and
$$
K_1=\{1,3,5,7,9,11\}, \quad K_2=\{1,4,7,10\}, \quad K_3=\{1,5,9\}, \quad K_4=\{4,10\}.
$$
Let $Y$ be the image of $\phi$.
Since $K_1 \cap K_4 =\emptyset$ and $K_1 \cup K_4 =\cup_{i\in T_1} K_i$, it follows from Theorem~\ref{spoke_three_eq} that there is an SFT $Z\subset \widehat{X_G}$ such that $\phi\vert_Z$ is finite-to-one and onto $Y$. Note that in this example, we cannot simply choose $H$ in the proof of Theorem \ref{spoke_three_eq} to be the graph obtained from $G$ by deleting $U_2$ and $U_3$.
This is because $10^41$ is allowed in $Y$, but not allowed in  $\phi(\widehat{X_{U_1 \cup U_4}})$: the only $\Phi$-preimage of $10^41$ is $V(\gamma_2^+ C_2 \gamma_2^-)$ and it comes only from the spoke $U_2$. Instead, we  let $H$ be the graph obtained from $G$ by deleting $U_2$ and $U_3$, and then adding to $H$ a cycle of length $5$ starting and ending with $B$. Then, according to the proof of Theorem \ref{spoke_three_eq}, ${\widehat{X_H}}$ is conjugate to some SFT $Z\subset \widehat{X_G}$ and $\phi\vert_{Z}$ is finite-to-one and onto $Y$.
\end{exmp}

\begin{exmp} An example for which the conditions in Theorem \ref{spoke_three_eq} are not satisfied is given in Section 3 of [MPW84]. Here, $G$ is the 4-spoke graph defined by
$$
m_1=m_2=1, \ \ m_3=2, \ \ m_4=6, \ \ d_1=2, \ \ d_2=3,  \ \ d_3=6, \ \ d_4=6.
$$
Let $\phi$ be the standard factor code on $G$. It was shown in [MPW84] that for this $\phi$, P3 is not true and therefore P2 is not true.
\end{exmp}

\section{Conjecture: P2 and P3 are equivalent for Standard Factor Codes on Spoke Graphs}
\label{Conjecture}
Having characterized the condition under which P2 is satisfied for standard factor codes on spoke graphs, we now turn to the question whether P2 is equivalent to P3 for these codes. Recall from Proposition \ref{Fto1} that P2 always implies P3 for a general factor code. For the converse, we have the following.

\begin{conj} \label{(3)_implies_(2)}
Let $G$ be a spoke graph and $\phi$ be the standard factor code on $G$. Then P3 implies P2.
\end{conj}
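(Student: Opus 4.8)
The plan is to prove that P3 forces the combinatorial condition~(1) of Theorem~\ref{spoke_three_eq}; P2 then follows from that theorem, and (via Proposition~\ref{Fto1}) one even recovers a capacity‑achieving Markov measure of the ``efficient'' type of Proposition~\ref{Equiv_nu}. So suppose $\nu$ is a stationary Markov measure on $\widehat{X_G}$ with $\phi^*(\nu)=\mu_0$. Since $\mu_0$ is extreme among the stationary measures on $Y$, we may take $\nu$ ergodic, hence carried by a single irreducible component; I will first treat the case that $\nu$ is $1$-step Markov, given by transition probabilities on the vertices of $G$.

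Step 1 (excursion decomposition). Every vertex of $G$ other than $B$ lies on a unique spoke and has out‑degree $1$ within it, while the branch vertex $B_i'$ of a regular spoke has exactly two out‑edges. Because the $\nu$‑transition probabilities out of $B$ and out of each $B_i'$ do not depend on the incoming edge, a $\nu$‑typical path decomposes into a bi‑infinite \emph{i.i.d.} concatenation of first‑return loops at $B$: a loop independently enters spoke $U_j$ with probability $\pi_j$ $(\sum_j\pi_j=1)$, and for $j\in T_1$ it then winds $C_j$ a further $k$ times with probability $p_j^{\,k}(1-p_j)$. Since $\mu_0$ is the stationary renewal process on $Y=X(S)$ with inter‑arrival law $\lambda^{-n-1}$ $(n\in S)$, equating the inter‑arrival laws of $\phi^*(\nu)$ and $\mu_0$ gives, for all $n\ge 0$,
$$
\sum_{\substack{i\in T_0\\ d_i=n+1}}\pi_i\ +\ \sum_{\substack{i\in T_1,\ n\ge m_i\\ n\equiv m_i\,(d_i)}}\pi_i(1-p_i)\,p_i^{(n-m_i)/d_i}\ =\ \lambda^{-n-1}.
$$

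Step 2 (exponential matching). For $i\in T_1$ with $p_i>0$ set $\beta_i:=p_i^{1/d_i}$ and $\gamma_i:=\pi_i(1-p_i)\beta_i^{-m_i}$. For $n$ larger than all $d_i$ and $m_i$, the identity restricted to a fixed residue $a$ mod $D$ becomes $\sum_{i\in T_1,\ a\in K_i}\gamma_i\beta_i^{\,n}=\lambda^{-1}\lambda^{-n}$ for all large $n$ in that arithmetic progression. Distinct exponentials $n\mapsto t^{\,n}$ are linearly independent on an infinite arithmetic progression, so this forces $\beta_i=\lambda^{-1}$ (equivalently $p_i=\lambda^{-d_i}$) for every $i$ with $\gamma_i>0$ and $a\in K_i$; moreover $\sum_{i\in W,\ a\in K_i}\gamma_i=\lambda^{-1}$ where $W:=\{i\in T_1:\pi_i>0,\ p_i>0\}$, and positivity of $\lambda^{-n-1}$ for every $n\in S$ forces $\bigcup_{i\in W}K_i=\bigcup_{i\in T_1}K_i$. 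Thus $\nu$ hands us positive weights $(\gamma_i)_{i\in W}$ realizing a \emph{fractional exact cover}, of constant multiplicity $\lambda^{-1}$, of $\bigcup_{i\in T_1}K_i$ by the residue classes $\{K_i:i\in W\}$.

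Step 3 (the combinatorial heart). It remains to prove the purely number‑theoretic statement that a fractional exact cover by residue classes contains an honest one: some $W'\subseteq W$ with $\{K_i:i\in W'\}$ pairwise disjoint and $\bigcup_{i\in W'}K_i=\bigcup_{i\in T_1}K_i$ --- which is exactly condition~(1). The natural attempt is induction on $|W|$: choose $i_0\in W$ of minimal modulus $d_{i_0}$; if its weight is already $\lambda^{-1}$ then $K_{i_0}$ is disjoint from every other $K_i$ and one recurses on $W\setminus\{i_0\}$; otherwise every point of $K_{i_0}$ lies in another class, and one would delete $i_0$ and spread $\gamma_{i_0}$ over $K_{i_0}$ among the surviving classes. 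I expect this to be the genuine obstacle. The spreading is clean precisely when every surviving class meeting $K_{i_0}$ is \emph{contained} in $K_{i_0}$ --- e.g.\ when $d_{i_0}$ divides all other moduli, or under the divisibility/dominance hypotheses of Proposition~\ref{partialconverse}, where the Chinese Remainder Theorem describes how $K_{i_0}$ decomposes against the rest --- but when $d_{i_0}\nmid d_j$ a surviving $K_j$ straddles both $K_{i_0}$ and its complement, so raising $\gamma_j$ disturbs multiplicities outside $K_{i_0}$, and it is unclear the accounting can always be closed. A secondary point to discharge first is the reduction of general Markov order $r>1$ to the above: there the loop sequence is only Markov, not i.i.d., but since the length‑$r$ memory along $C_i$ is eventually periodic the windings still have geometric tails with ratio depending only on $i$, so averaging over the finitely many ``boundary states'' at $B$ leaves the Step~2 identity --- hence the same fractional‑exact‑cover conclusion --- intact.
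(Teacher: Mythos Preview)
The statement you are attempting to prove is labelled a \emph{Conjecture} in the paper, and the paper does not prove it; it establishes only the partial results of Proposition~\ref{partialconverse}. So there is no ``paper's own proof'' to compare against, and your candid assessment that Step~3 is ``the genuine obstacle'' is exactly right: that combinatorial step is the open part of the conjecture.

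Your Steps~1--2 are correct and line up closely with the paper's own reduction. What you call the ``fractional exact cover'' identity $\sum_{i\in W,\ a\in K_i}\gamma_i=\lambda^{-1}$ is the content of Lemma~\ref{Combined_properties}(a) (equation~(\ref{one})), your $W$ is the paper's $P$, your containment $\bigcup_{i\in W}K_i=\bigcup_{i\in T_1}K_i$ is Lemma~\ref{Combined_properties}(b), and the linear-independence-of-exponentials argument forcing $p_i=\lambda^{-d_i}$ is exactly how the paper obtains~(\ref{Q_and_Q_i}). Your treatment of higher Markov order is slightly more informal than the paper's (which simply writes $\Pi_i=\nu(V(\gamma_i^+(C_i)^{Dk/d_i})\mid B)$ and works directly with the $k$-step conditional), but the conclusion is the same.

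Where you and the paper diverge is in the attack on Step~3. You propose an induction on $|W|$, deleting a class of minimal modulus and redistributing its weight; you correctly note this fails when some surviving $K_j$ straddles $K_{i_0}$ and its complement. The paper does not attempt a general induction. Instead it extracts from the constant-multiplicity identity a monotonicity constraint (Lemma~\ref{Combined_properties}(c): $R_{j'}\cap P\subseteq R_j\cap P$ forces equality) and a ``disjointness propagation'' lemma (Lemma~\ref{induct}), and then uses these together with the Chinese Remainder Theorem to handle the special cases $(a)$--$(d)$ of Proposition~\ref{partialconverse} by direct case analysis. Neither approach resolves the general question, which remains open.
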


\begin{rem}
It will be shown that if P3 holds, i.e.,
there is a $k$-th order Markov measure $\nu$ on $\widehat{X_G}$ s.t. $\phi^*(\nu)=\mu_0$, the mme on $Y$, then $\nu(V(C_i) \vert V((C_i)^k)) = Q^{-d_i},$
where $C_i$ is the cycle (disjoint from $B$) on the spoke $U_i$, $Q=e^{h_{top}(Y)}$, $d_i$ is the length of $C_i$. This is Part $(a)$ of the proof of Proposition \ref{Combined_properties} (see Equation (\ref{one})). Hence, the $\nu$-weight-per-symbol of each such $V((C_i)^\infty)$ is a constant $Q^{-1}$. If it is true that the weight-per-symbol of each of the periodic points $V((\gamma_i^+\gamma_i^-)^\infty)$ is also $Q^{-1}$, then one would have Condition 4 of Proposition \ref{Equiv_nu} and P2 would be true.  It may be that there is another Markov measure $\nu'$
on $\widehat{X_G}$ s.t. $\phi^*(\nu')=\mu_0$ such that this condition is satisfied.

\end{rem}

In the remainder of this section, we will prove some special cases of Conjecture \ref{(3)_implies_(2)}. To this end, we begin with some lemmas.

\begin{lem}(Consequence of Strong Form of Chinese Remainder Theorem) \label{Chinese_remainder}

\noindent Let $k$ be a positive integer. If for any $1\leq i< j\leq k$, there exists $x_{i,j}$ s.t. $x_{i,j}=a_i \ (\bmod\ d_i)$ and $x_{i,j}=a_j \ (\bmod\ d_j)$, then there exists $x$ such that $x=a_l \ (\bmod\ d_l)$ for any $1\leq l\leq k$.

\end{lem}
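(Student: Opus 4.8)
The plan is to recognize this as the standard ``pairwise-to-global'' consistency criterion for systems of linear congruences, and to prove it one prime at a time. First I would restate the hypothesis in a more familiar form: for fixed $i<j$, the existence of $x_{i,j}$ with $x_{i,j}\equiv a_i\pmod{d_i}$ and $x_{i,j}\equiv a_j\pmod{d_j}$ is equivalent, by the two-modulus case of the Chinese Remainder Theorem (equivalently, B\'ezout), to the divisibility condition $\gcd(d_i,d_j)\mid a_i-a_j$. Thus the lemma is equivalent to the assertion: if $\gcd(d_i,d_j)\mid a_i-a_j$ for all $1\le i<j\le k$, then the simultaneous system $x\equiv a_l\pmod{d_l}$, $1\le l\le k$, is solvable.

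Next I would localize at each prime $p$. Writing $M=\mathrm{lcm}(d_1,\dots,d_k)$ and using the ring isomorphism $\mathbb{Z}/M\mathbb{Z}\cong\prod_{p\mid M}\mathbb{Z}/p^{v_p(M)}\mathbb{Z}$, the global system is solvable if and only if, for every prime $p$, the ``local'' system $x\equiv a_l\pmod{p^{e_l}}$ ($1\le l\le k$, with $e_l:=v_p(d_l)$) is solvable. Fix $p$ and reorder the indices so that $e_1=\max_l e_l$. Since the moduli $p^{e_1},\dots,p^{e_k}$ are totally ordered under divisibility, the local system is solvable precisely when $a_l\equiv a_1\pmod{p^{e_l}}$ for every $l$: taking $x\equiv a_1\pmod{p^{e_1}}$ automatically gives $x\equiv a_1\pmod{p^{e_l}}$, and conversely any solution forces $a_1\equiv a_l$ modulo $p^{\min(e_1,e_l)}=p^{e_l}$. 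But the hypothesis $\gcd(d_1,d_l)\mid a_1-a_l$ gives exactly $p^{\min(e_1,e_l)}=p^{e_l}\mid a_1-a_l$. Hence every local system is solvable, so the global system is solvable, which proves the lemma.

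An equivalent route is a direct induction on $k$: having solved the first $k-1$ congruences to obtain $b$, unique modulo $m:=\mathrm{lcm}(d_1,\dots,d_{k-1})$, one glues in the $k$-th congruence, which is possible iff $\gcd(m,d_k)\mid b-a_k$; this follows from the distributive identity $\gcd(m,d_k)=\mathrm{lcm}_{1\le i<k}\gcd(d_i,d_k)$ together with $\gcd(d_i,d_k)\mid(b-a_i)+(a_i-a_k)=b-a_k$. I do not expect any genuine obstacle here; the only point needing a little care is the elementary distributive identity relating $\gcd$ and $\mathrm{lcm}$, which is itself cleanest to verify prime by prime, so the prime-local argument of the second paragraph is really the most economical presentation.
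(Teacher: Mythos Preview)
Your proof is correct and follows essentially the same approach as the paper: both first convert the pairwise solvability hypothesis into the divisibility condition $\gcd(d_i,d_j)\mid a_i-a_j$, and then invoke the generalized Chinese Remainder Theorem. The only difference is that the paper simply cites this result (LeVeque, \emph{Topics in Number Theory}, Theorem~3-12), whereas you supply a self-contained proof via prime localization (and sketch the inductive alternative); your argument is more detailed but not a genuinely different route.
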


\begin{proof}
For any $1\leq i<j\leq k$, let $g_{i,j}=gcd(d_i, d_j)$. Then $g_{i,j}$ divides $x_{i,j}-a_i$ and $x_{i,j}-a_j$ so $g_{i,j}$ divides $a_i-a_j$. Hence, the generalized Chinese Remainder Theorem~[Le56, Theorem 3-12] asserts that there is a common solution to $x=a_i \ (\bmod\ d_i), i=1,2,\cdots, k$.
\end{proof}

\begin{lem} \label{Combined_properties}
Let $\nu$ be a $k$-th order Markov measure on $\widehat{X_G}$ such that P3 holds. Define $\Pi_i := \nu(V(\gamma_i^+(C_i)^{Dk/d_i}) \vert B)$, $P:= \{i\in T_1: \Pi_i>0\}$ and $R_j:= \{i\in T_1: j\in K_i\}=\{i\in T_1: d_i \mbox{ divides } j-m_i \}$. Then
\begin{enumerate}
\item[(a)] For each $0\leq j<D$,
\begin{align} \label{one}
Q^{-Dk} = \sum_{i \in R_j\cap P} \Pi_i Q^{m_i + 1}(1 - Q^{-d_i})
\end{align}
where $Q:=e^{h_{top}(Y)}$;
\item[(b)] $\cup_{i\in T_1\setminus P} K_i \subset \cup_{i\in P} K_i$;
\item[(c)] For each pair $j, j'$, 
if $R_{j'}\cap P \subset R_j\cap P$, then $R_{j'} \cap P = R_{j}\cap P$.
\end{enumerate}
\end{lem}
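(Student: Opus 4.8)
\noindent The plan is to reduce everything to the renewal structure of first‑return excursions from the central state $B$, and then to match exponentials. Write $Q:=e^{h_{top}(Y)}$; I may assume $Q>1$, since otherwise $Y$ is trivial and the statement is vacuous. As $Y$ is the $S$‑gap shift, the mme formula recalled in Section~\ref{SymDyn} gives $Q=\lambda$, where $\sum_{m\in S}\lambda^{-m-1}=1$, and $\mu_0\bigl(X_{[0,n+1]}=10^n1\mid X_0=1\bigr)=Q^{-n-1}$ for $n\in S$ (and $0$ for $n\notin S$). Since $\phi$ is the one‑block code with $\Phi^{-1}(1)=\{B\}$, the hypothesis $\phi^*(\nu)=\mu_0$ becomes, after dividing by $\nu(X_0=B)=\mu_0(X_0=1)>0$,
\[
Q^{-n-1}=\sum_{\pi}\nu\bigl(V(\pi)\,\big|\,B\bigr)\qquad(n\in S),
\]
where $\pi$ ranges over first‑return paths $B\to B$ of length $n+1$ and $\nu(w\mid B):=\nu(X_{[0,|w|-1]}=w)/\nu(X_0=B)$ for a block $w$ beginning with $B$. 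Because $G$ is a spoke graph, every first‑return path lies in a single spoke, and once $n>\max\{d_i:i\in T_0\}$ the contributing ones are exactly the paths $\gamma_i^+(C_i)^{(n-m_i)/d_i}\gamma_i^-$ for $i\in R_n$; moreover $R_n=R_j$ whenever $n\equiv j\pmod D$ and $n$ is large. Thus everything comes down to understanding $\nu\bigl(V(\gamma_i^+(C_i)^t\gamma_i^-)\mid B\bigr)$ as $t\to\infty$.

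The first thing I would establish is a \emph{renewal formula}. Inside a regular spoke $U_i$ the only vertex of out‑degree $>1$ is $B_i'$, whose two out‑edges begin $C_i$ and $\gamma_i^-$ respectively; so a first‑return path through $U_i$ simply makes, at each visit to $B_i'$, the binary choice ``wrap $C_i$ once more'' versus ``leave,'' and all other steps are forced. Since $\nu$ has memory $k$, that choice depends only on the $k$‑vertex window ending at $B_i'$, and after $C_i$ has been wrapped at least $\lceil(k-1)/d_i\rceil$ times (a number that is $\le k-1$) this window equals a fixed pattern $p_i$ carried entirely by $C_i$. Setting $q_i:=\nu(\text{wrap again}\mid p_i)$ and comparing $V(\gamma_i^+(C_i)^t\gamma_i^-)$ with its prefix $V(\gamma_i^+(C_i)^{Dk/d_i})$ via the Markov property, one obtains
\[
\nu\bigl(V(\gamma_i^+(C_i)^t\gamma_i^-)\,\big|\,B\bigr)=\Pi_i\,q_i^{\,t-Dk/d_i}(1-q_i)\qquad(t\ge Dk/d_i).
\]
I would then record that $i\in P$ forces $q_i\in(0,1)$: since $Dk/d_i=n(i)k\ge k>\lceil(k-1)/d_i\rceil$, realizing $\Pi_i>0$ requires at least one wrap made from the window $p_i$, so $q_i>0$; and $q_i=1$ would give positive $\nu$‑mass to points whose forward orbit is eventually $V(C_i)^\infty$, hence positive $\mu_0$‑mass to $\{y:\exists N,\ y_{[N,\infty)}=(10^{d_i-1})^\infty\}$, which is impossible because $\mu_0$ is a non‑atomic ergodic measure of positive entropy and this set is a countable union of zero‑entropy tails.

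For part (a), fix $j$ with $R_j\neq\emptyset$ (if $R_j=\emptyset$ there are no large gaps $\equiv j\pmod D$, and (\ref{one}) is to be read vacuously). Substituting $n=j+\ell D$ with $\ell\to\infty$ into the mme identity and using the renewal formula — terms with $i\notin P$ vanish, being $\le\Pi_i=0$ — produces, for all large $\ell$, an identity
\[
Q^{-j-1}\,(Q^{-D})^{\ell}=\sum_{i\in R_j\cap P}c_i\bigl(q_i^{\,n(i)}\bigr)^{\ell},\qquad c_i:=\Pi_i(1-q_i)\,q_i^{\,(j-m_i)/d_i-n(i)k}>0.
\]
Both sides are finite, strictly positive combinations of geometric sequences in $\ell$, so linear independence of distinct geometric sequences forces $q_i^{\,n(i)}=Q^{-D}$ for every $i\in R_j\cap P$ (that is, $q_i=Q^{-d_i}$), together with $\sum_{i\in R_j\cap P}c_i=Q^{-j-1}$. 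Substituting $q_i=Q^{-d_i}$ into the last relation and simplifying (using $d_i\cdot(j-m_i)/d_i=j-m_i$ and $d_in(i)=D$) yields exactly $\sum_{i\in R_j\cap P}\Pi_iQ^{m_i+1}(1-Q^{-d_i})=Q^{-Dk}$, which is (\ref{one}); in particular $R_j\cap P\neq\emptyset$ whenever $R_j\neq\emptyset$, since the left side is positive.

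For part (b), suppose some $g$ lay in $\bigcup_{i\in T_1\setminus P}K_i$ but not in $\bigcup_{i\in P}K_i$; choose $i_0\in T_1\setminus P$ with $g\in K_{i_0}$ (so $R_g\ni i_0$), and take $n$ large with $n\equiv g\pmod D$, so $n\in S$ and $\mu_0(10^n1\mid1)=Q^{-n-1}>0$. But the mme identity writes this as $\sum_{i\in R_g}\nu\bigl(V(\gamma_i^+(C_i)^{(n-m_i)/d_i}\gamma_i^-)\mid B\bigr)$, and every $i\in R_g$ has $g\in K_i$, hence $i\notin P$, hence the term is $\le\Pi_i=0$ — a contradiction. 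For part (c), (\ref{one}) holds for each $j$ with $R_j\neq\emptyset$ (equivalently $R_j\cap P\neq\emptyset$) and every summand $\Pi_iQ^{m_i+1}(1-Q^{-d_i})$ is strictly positive ($\Pi_i>0$ for $i\in P$, $Q>1$); so if $R_{j'}\cap P\subseteq R_j\cap P$, subtracting the two instances of (\ref{one}) gives $0=\sum_{i\in(R_j\cap P)\setminus(R_{j'}\cap P)}\Pi_iQ^{m_i+1}(1-Q^{-d_i})$, a sum of strictly positive terms, so its index set is empty and $R_j\cap P=R_{j'}\cap P$. The main obstacle I expect is the renewal formula together with the verification $q_i\in(0,1)$ for $i\in P$: this requires carrying out the order‑$k$ window bookkeeping on $C_i$ correctly and ruling out the ``absorbed in a single cycle'' degeneracy. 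Once that is done, (a) is just matching of geometric sequences, and (b) and (c) follow from term‑by‑term positivity.
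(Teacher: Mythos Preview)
Your proof is correct and follows essentially the same route as the paper's: both decompose $\mu_0(10^n1\mid 1)$ via first-return paths through spokes, extract a geometric renewal factor $q_i$ (the paper's $Q_i$) from the $k$-th order Markov property, and deduce $q_i=Q^{-d_i}$ by matching exponentials as the gap length grows (the paper phrases this as ``letting $n\to\infty$'', you as linear independence of distinct geometric sequences), after which parts (b) and (c) are handled identically by full support of $\mu_0$ and by subtracting two instances of~(\ref{one}). One minor slip: the $\phi$-image of a point with forward tail $V(C_i)^\infty$ has tail $0^\infty$, not $(10^{d_i-1})^\infty$ (since $C_i$ avoids $B$), but your measure-zero argument applies equally well to the correct set.
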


\begin{proof}
Fix a congruence class $0\le j < D$ and let $\mu_0$ be the mme on $Y$.

Since $\phi(\nu) = \mu_0$, for all $n\ge \max_{i\in T} m_i/D$,
$$
\mu_0(10^{Dk + j + Dn}1) = \sum_{i \in R_j} \nu(V(\gamma_i^+ (C_i)^{(1/d_i)(Dk + j +  Dn - m_i)}\gamma_i^-)).
$$
Let
$$
Q_i := \nu(V(C_i) \vert V(\gamma_i^+(C_i)^{(1/d_i)Dk})).
$$
Since $\mu(1) = \nu(B)$, using the formula for the mme $\mu$ we have
\begin{align*}
Q^{-(Dk + j + Dn + 1)} &= \sum_{i \in R_j} \Pi_i (Q_i^{1/d_i})^{-m_i}
(Q_i^{1/d_i})^{Dn+j}(1-Q_i) \\
&= \sum_{i \in R_j\cap P} \Pi_i (Q_i^{1/d_i})^{-m_i}
(Q_i^{1/d_i})^{Dn+j}(1-Q_i)
\end{align*}
and so
$$
Q^{-(Dk  + 1)} = \sum_{i \in R_j\cap P} \Pi_i (Q_i^{1/d_i})^{-m_i}
\left(\frac{Q_i^{1/d_i}}{Q^{-1}}\right)^{Dn+j}(1-Q_i).
$$
Letting $n \rightarrow \infty$, we have for all $i \in R_j \cap P$,
\begin{align} \label{Q_and_Q_i}
\frac{Q_i^{1/d_i}}{Q^{-1}} = 1.
\end{align}
This yields equation (\ref{one}) and proves $(a)$.

Since $Y$ is a gap shift, the mme on $Y$ is fully supported and so gives positive measure to each allowed gap. Thus,
 $\cup_{i \in T_1 \setminus P} K_i \subset \cup_{i \in P} K_i$, proving $(b)$.

To see $(c)$, we first derive from (\ref{one}) that
$$
\sum_{i \in R_{j'}\cap P} \Pi_iQ^{m_i+1}(1 - Q^{-d_i}) = Q^{-Dk} = \sum_{i \in R_j\cap P} \Pi_iQ^{m_i+1}(1 - Q^{-d_i}).
$$
Thus,
$$
\sum_{i \in (R_{j}\cap P) \setminus (R_{j'} \cap P)}  \Pi_iQ^{m_i + 1}(1 - Q^{-d_i}) =0
$$
which immediately implies $ (R_{j}\cap P) \setminus (R_{j'} \cap P)  = \emptyset$.
\end{proof}

\begin{lem} \label{induct}
Let $P$ be defined as in Lemma~\ref{Combined_properties} and $i_1, i_2\in P$ with $K_{i_1}\cap K_{i_2}\neq \emptyset$. Then, for any $j\in K_{i_1} \setminus K_{i_2}$, there exists $i_3\in P$ such that
\begin{enumerate}
\item[(1)] $j\in K_{i_3}$;
\item[(2)] $K_{i_2}\cap K_{i_3}= \emptyset.$
\end{enumerate}
\end{lem}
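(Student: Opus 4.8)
The plan is to argue by contradiction, using the strong form of the Chinese Remainder Theorem (Lemma~\ref{Chinese_remainder}) together with part $(c)$ of Lemma~\ref{Combined_properties}. First I would record the dictionary between the two viewpoints involved: in the notation of Lemma~\ref{Combined_properties}, $i\in R_n$ means precisely $n\in K_i$, i.e. $n\equiv m_i\pmod{d_i}$; and for two indices $i,i'\in T_1$ one has $K_i\cap K_{i'}\neq\emptyset$ (as subsets of $\{0,1,\dots,D-1\}$) if and only if the pair of congruences $x\equiv m_i\pmod{d_i}$, $x\equiv m_{i'}\pmod{d_{i'}}$ is solvable, the nontrivial direction being that an integer solution, reduced modulo $D$, lands in $K_i\cap K_{i'}$ because $d_i\mid D$ and $d_{i'}\mid D$.

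Next I would fix $j\in K_{i_1}\setminus K_{i_2}$, so that $i_1\in R_j\cap P$ (hence $R_j\cap P\neq\emptyset$) and $i_2\notin R_j$. Suppose, toward a contradiction, that no $i_3\in R_j\cap P$ satisfies $K_{i_3}\cap K_{i_2}=\emptyset$; equivalently, every $i_3\in R_j\cap P$ has $K_{i_3}\cap K_{i_2}\neq\emptyset$. (Had some $i_3\in R_j\cap P$ satisfied the disjointness, it would already be the required index, since $i_3\in R_j$ gives $j\in K_{i_3}$.) Consider the finite nonempty set $A:=(R_j\cap P)\cup\{i_2\}\subseteq P$. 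Every pair of indices in $A$ has intersecting $K$-sets: two indices of $R_j\cap P$ both have $j$ in their $K$-sets, while any index of $R_j\cap P$ paired with $i_2$ intersects by the contradiction hypothesis. By the dictionary above, every two of the congruences $\{x\equiv m_i\pmod{d_i}:i\in A\}$ are simultaneously solvable, so Lemma~\ref{Chinese_remainder} yields an integer $r$, which we may take in $\{0,\dots,D-1\}$ after reducing modulo $D$, with $r\equiv m_i\pmod{d_i}$ for all $i\in A$; that is, $A\subseteq R_r$, and since $A\subseteq P$ this gives $R_j\cap P\subseteq A\subseteq R_r\cap P$.

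Finally, part $(c)$ of Lemma~\ref{Combined_properties}, applied to the pair $r,j$, forces $R_j\cap P=R_r\cap P$; but $i_2\in A\subseteq R_r\cap P$ while $i_2\notin R_j$, contradicting this equality. Hence some $i_3\in R_j\cap P$ has $K_{i_3}\cap K_{i_2}=\emptyset$, and, being in $R_j$, it also satisfies $j\in K_{i_3}$, so this $i_3$ is as required. The only places needing care are the verification that reduction modulo $D$ preserves each congruence $x\equiv m_i\pmod{d_i}$ (immediate from $d_i\mid D$) and the bookkeeping that all pairs in $A$ genuinely have intersecting $K$-sets before invoking Lemma~\ref{Chinese_remainder}; I do not expect a real obstacle. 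I would also note that the hypothesis $K_{i_1}\cap K_{i_2}\neq\emptyset$ is not actually used by this argument — if it fails, $i_3=i_1$ works outright — so the argument disposes of all cases uniformly.
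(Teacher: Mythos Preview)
Your proof is correct and follows essentially the same route as the paper's: combine Lemma~\ref{Chinese_remainder} with Lemma~\ref{Combined_properties}(c) to derive a contradiction from the assumption that every index in $R_j\cap P$ has $K$-set meeting $K_{i_2}$. The paper breaks this into two stages (first showing $(R_j\cap P)\setminus\{i_1,i_2\}\neq\emptyset$, then locating $i_3$ there), whereas you handle both at once via the set $A=(R_j\cap P)\cup\{i_2\}$; your remark that the hypothesis $K_{i_1}\cap K_{i_2}\neq\emptyset$ is not actually needed is also correct and is not noted in the paper.
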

\begin{proof}
For notational convenience, we rewrite $j$ by $j_1$ and define
${\small S(i_1, i_2, j_1):=(R_{j_1} \cap P) \setminus \{i_1, i_2\}}$, where $R_{j_1}$ is defined in Lemma \ref{Combined_properties}.

We first show that $S(i_1, i_2, j_1)\neq \emptyset$. Suppose to the contrary that $S(i_1, i_2, j_1)= \emptyset$. Then, $R_{j_1}\cap P=\{i_1\}$. Since $K_{i_1} \cap K_{i_2}\neq \emptyset$, there exists $j_2\in K_{i_1} \cap K_{i_2}$ and therefore $R_{j_2}\cap P \supset \{i_1, i_2\}$. Hence, $R_{j_1}\cap P\subsetneq R_{j_2} \cap P$, contradicting Lemma \ref{Combined_properties} $(c)$.

We then claim that there exists $i_3\in S(i_1, i_2, j_1)$ such that $K_{i_2} \cap K_{i_3}=\emptyset$. If not, then
$$
K_{i_2} \cap K_i\neq \emptyset \quad  \mbox{ for any }i\in S(i_1, i_2, j_1).
$$
Recalling that $j_2\in K_{i_1}\cap K_{i_2}$ and $j_1\in K_{i_1} \cap (\cap_{i\in S(i_1, i_2, j_1)}K_i)$, we derive from Lemma \ref{Chinese_remainder} that there exists $j_4\in K_{i_1} \cap K_{i_2} \cap (\cap_{i\in S(i_1, i_2, j_1)} K_i).$ Hence,
$$R_{j_1}\cap P =\{i_1\}\cup S(i_1, i_2, j_1) \subsetneq \{i_1, i_2\}\cup S(i_1, i_2, j_1) \subset R_{j_4}\cap P,$$
contradicting Lemma \ref{Combined_properties} $(c)$.
\end{proof}

With these lemmas in hand, we prove the following.
\begin{pr}\label{partialconverse}
Let $G$ be a spoke graph, $\phi$ be the standard factor code on $G$ and $P$ be defined as in Lemma \ref{Combined_properties}. If there is a stationary Markov measure $\nu$ on $\widehat{X_G}$ s.t. $\phi^*(\nu)=\mu_0$, the mme of the output $Y$,
then there is an SFT $Z\subset \widehat{X_G}$ such that $\phi\vert_Z$ is finite-to-one and onto $Y$ if any of the following hold:
\begin{enumerate}
\item[(a)] $\cap_{i\in P} K_i \neq \emptyset$ (in particular, this holds when $m_i = 1$ for all $i$ or the $\{d_i\}$ are pairwise co-prime (by the Chinese Remainder Theorem));
\item[(b)] For any $i_1, i_2\in P$, $K_{i_1}\cap K_{i_2} \neq \emptyset$;
\item[(c)] There are subsets $E_1$ and $E_2$ of $P$ such that $\{K_i: i\in E_1\}$ and $\{K_i: i\in E_2\}$ are both pairwise disjoint and $\cup_{i\in E_1\cup E_2} K_i =\cup_{i\in T_1} K_i$. In particular, this condition is satisfied if there are only two distinct $d_i$'s;
\item[(d)] $\vert P\vert \leq 5$.
\end{enumerate}
\end{pr}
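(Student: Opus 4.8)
The plan is to reduce P2, via Theorem~\ref{spoke_three_eq}, to a purely combinatorial task: in each case it suffices to produce a set $W\subseteq T_1$ with $\{K_i:i\in W\}$ pairwise disjoint and $\bigcup_{i\in W}K_i=\bigcup_{i\in T_1}K_i$. By Lemma~\ref{Combined_properties}$(b)$ the right-hand side equals $\bigcup_{i\in P}K_i$, so it is enough to build such a $W$ inside $P$ covering $\bigcup_{i\in P}K_i$. I will phrase everything in terms of the \emph{patterns} $A_j:=R_j\cap P$ for $j\in\bigcup_{i\in P}K_i$: a set $W\subseteq P$ does the job precisely when $|W\cap A_j|=1$ for every such $j$, disjointness being ``$\le 1$'' and covering being ``$\ge 1$''. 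The decisive structural inputs are Lemma~\ref{Combined_properties}$(c)$, which says the family $\{A_j\}$ is an antichain under inclusion, and Lemma~\ref{induct}.

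For $(a)$, I would fix $j_0\in\bigcap_{i\in P}K_i$, so that $A_{j_0}=P$; then every pattern satisfies $A_j\subseteq A_{j_0}$, and Lemma~\ref{Combined_properties}$(c)$ forces $A_j=P$ for all $j$. Hence all the $K_i$ with $i\in P$ coincide and $W=\{i_0\}$ for any single $i_0\in P$ works. The two parenthetical special cases are immediate: if $m_i=1$ for every $i$ then $1\in K_i$ for every $i$ (the case with some $d_i=1$ being trivial), and if the $d_i$ are pairwise coprime the ordinary Chinese Remainder Theorem gives a residue mod $D=\prod_i d_i$ lying in every $K_i$. For $(b)$, each pairwise intersection $K_{i_1}\cap K_{i_2}\ne\emptyset$ produces an integer congruent to $m_{i_1}$ modulo $d_{i_1}$ and to $m_{i_2}$ modulo $d_{i_2}$; Lemma~\ref{Chinese_remainder} then yields a single $x$ with $x\equiv m_i\pmod{d_i}$ for all $i\in P$, so $x\bmod D\in\bigcap_{i\in P}K_i$, and we are back in case $(a)$.

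For $(c)$, the hypothesis supplies $E_1,E_2\subseteq P$, each meeting every pattern $A_j$ at most once, with $E_1\cup E_2$ meeting every $A_j$. I would begin from $W:=E_1$ and repair it by an exchange argument: a pattern not met by $E_1$ is met exactly once by some $i^{(2)}\in E_2$, and I would insert $i^{(2)}$ while deleting whatever current members of $W$ now conflict with it and then re-covering the patterns those deletions vacated. Lemma~\ref{induct} is precisely the device that keeps the exchange alive: when an index to be inserted would conflict with a surviving $w\in W$ on some pattern $B\ni w$, while the pattern $A_j$ we are trying to cover avoids $w$, Lemma~\ref{induct} returns a replacement index in $P$ that still covers $A_j$ but no longer conflicts with $w$. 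The main obstacle is to organize these swaps so that the procedure terminates with $|W\cap A_j|=1$ for all $j$. (In the degenerate situation of only two distinct values among the $d_i$ this is transparent, since within each $d$-class the $K_i$ are pairwise disjoint or equal, so one representative per class inside each $d$-class gives admissible $E_1,E_2$.)

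Finally, $(d)$ is a bounded case analysis. I would encode the data by the conflict graph on $P$ (with $i\sim i'$ iff $K_i\cap K_{i'}\ne\emptyset$), in which every pattern $A_j$ is a clique. The antichain property of Lemma~\ref{Combined_properties}$(c)$ is very restrictive on which clique families can arise — in particular it excludes the odd-cycle-type configurations that would admit no exact transversal — and, combined with Lemma~\ref{induct}, it leaves only a short list of possibilities when $|P|\le 5$. For each surviving configuration I would either exhibit $W$ directly or recognize that the hypotheses of $(a)$, $(b)$ or $(c)$ are already in force. The delicate point is verifying that this enumeration is exhaustive, but that is a finite check since $|P|\le 5$.
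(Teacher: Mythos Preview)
Your treatment of $(a)$ and $(b)$ is correct and matches the paper exactly.

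For $(c)$, however, there is a genuine gap. You propose an iterative exchange procedure and explicitly flag termination as ``the main obstacle'' --- and indeed you give no argument that it terminates or that Lemma~\ref{induct} suffices to keep it going. The paper's argument is both simpler and complete, and does not use Lemma~\ref{induct} at all here. Set
\[
F:=\{\,i\in E_1:\ K_i\cap K_{i'}=\emptyset\text{ for all }i'\in E_2\,\}
\]
and take $W:=F\cup E_2$. Disjointness of $\{K_i:i\in W\}$ is immediate. For covering, the paper shows that every $i\in E_1\setminus F$ satisfies $K_i\subseteq\bigcup_{i'\in E_2}K_{i'}$: if some $j\in K_i$ were missed by $E_2$, then (using disjointness of $\{K_{i'}:i'\in E_1\}$) $R_j\cap P=\{i\}$, whereas any $j'\in K_i\cap K_{i_3}$ with $i_3\in E_2$ gives $R_{j'}\cap P\supseteq\{i,i_3\}$, contradicting the antichain property of Lemma~\ref{Combined_properties}$(c)$. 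Your pattern language is well suited to this: the one-line point is that if some $A_{j'}$ contains $\{i,i_3\}$ with $i_3\in E_2$, then no $A_j$ can equal $\{i\}$ alone.

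For $(d)$, your outline is too schematic to count as a proof. Saying that Lemma~\ref{Combined_properties}$(c)$ ``excludes the odd-cycle-type configurations'' and that the rest is ``a finite check'' does not identify the mechanism. The paper's argument is a genuine two-case analysis. Case~1: if every intersecting pair is nested ($K_i\subseteq K_{i'}$ or conversely), one takes $W$ to be the maximal elements of the resulting partial order. Case~2: if some intersecting pair $K_1,K_2$ has neither containing the other, one exploits that $l.c.m.(d_1,d_2)/d_1\ge 3$ to manufacture three specific residues $j_2,j_3\in K_1\setminus K_2$ and $j_4\in K_2\setminus K_1$, applies Lemma~\ref{induct} three times to produce indices $3,4,5$ with prescribed incidence/disjointness relations, and then argues (again via Lemma~\ref{Combined_properties}$(c)$ and Lemma~\ref{Chinese_remainder}) that $K_3\cap K_4=\emptyset$. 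This exhibits $E_1=\{1,5\}$ and $E_2=\{2,3,4\}$ as in part~$(c)$. The point you are missing is that Lemma~\ref{induct} is used constructively to pin down the entire incidence table in Case~2, not merely to rule out abstract obstructions on a conflict graph.
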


\begin{proof}
According to Theorem \ref{spoke_three_eq}, it suffices to show that there is a $W\subset T_1$ such that $\cup_{i\in W} K_i = \cup_{i\in T_1} K_i$ and $\{K_i: i\in W\}$ are pairwise disjoint.

\textbf{Proof of $(a)$: }
Let $A:= \cup_{i\in P} K_i$. Note that $P \ne \emptyset$ by the existence of $\nu$.
Let $j\in \cap_{i\in P} K_i$. Apply Lemma \ref{Combined_properties}$(c)$ to this $j$ and an arbitrary $j' \in A$ to get that  for all $i \in P$, $i \in \cap_{j'\in A} R_{j'}$ and so each
$K_i = A$. By Lemma \ref{Combined_properties} $(b)$, $A = \cup_{i \in T_1} K_i$.
Hence, $W$ can be taken to consist of only one element, namely any element of $P$.
\medskip

\textbf{Proof of $(b)$: } Since $K_i$'s are pairwise intersecting, an application of Lemma \ref{Chinese_remainder} to $\{K_i: i\in P\}$ implies that $\cap_{i\in P} K_i\neq \emptyset$ which is Case $(a)$.
\medskip

\textbf{Proof of $(c)$: }
We assume WLOG that the $K_i$'s are distinct.
Denote
\begin{align}
F:= \{i\in E_1: K_i\cap K_{i'} = \emptyset \mbox{ for all } i'\in E_2\}.
\end{align}
We claim that for any $i\in E_1 \setminus F$, $K_i\subset \cup_{i'\in E_2} K_{i'}$. To see this, assume to the contrary that there are $i_1 \in E_1 \setminus F$ and $j\in K_{i_1}$ such that $j \notin \cup_{ i' \in E_2} K_{i'}$. Recalling that $K_{i_1} \cap K_{i_2} =\emptyset$ for $i_1, i_2\in E_1$ with $i_1\neq i_2$, we have $R_{j}\cap P=\{i_1\}$. On the other hand, $i_1 \in E_1 \setminus F$ implies that there exists $j'\in K_{i_1}$ and $i_3\in E_2$ such that $j'\in K_{i_1} \cap K_{i_3}$. Hence, $R_{j'}\cap P\supset \{i_1, i_3\} \supsetneq \{i_1\}=R_{j} \cap P$, contradicting Lemma \ref{Combined_properties} $(c)$.

Now let $W:= F \cup E_2$. Clearly $\{K_i: i\in W\}$ are pairwise disjoint by the definition of $F$. Since $K_i\subset \cup_{i'\in E_2} K_{i'}$ for any  $i\in E_1 \setminus F$, $\cup_{i\in W} K_i=\cup_{i\in P} K_i=\cup_{i\in T_1} K_i$, proving $(c)$.
\medskip

\textbf{Proof of $(d)$:}
By adding repeated spokes (for which the choice of the set $W$ is not affected), we can regard the cases $\vert P\vert<5$ as special cases of $\vert P \vert=5$. Hence, we assume $\vert P \vert=5$ in the following.

Let $P=\{1,2,3,4,5\}$. A pair $i,i'\in P$ is called an {\em intersecting pair} if $K_i \ne K_{i'}$ and $K_i \cap K_{i'} \ne \emptyset$. We consider the following cases.

\noindent\textbf{\underline{Case 1}: For any intersecting pair $i, i'\in P$, either $K_{i}\subset K_{i'}$ or $K_{i'}\subset K_{i}$}.

In this case, we define a partial order $\preccurlyeq$ in the following way:
if $i,i'$ is an intersecting pair and $K_i \subset K_{i'}$, then $K_i\preccurlyeq K_{i'}$; if $i ,i'$ is not a intersecting pair, then $K_i$ and $K_{i'}$ are incomparable.

The partial order $\preccurlyeq$ partitions the set $\{K_i: i\in P\}$ into several classes such that
\begin{enumerate}
\item each class is a chain with a unique maximal element (under $\preccurlyeq$);
\item if $K_i$ and $K_{i'}$ are from different classes, then $K_{i} \cap K_{i'}=\emptyset$.
\end{enumerate}
Hence, letting $W$ be the indices of all the maximal elements, we have $\{K_i: i\in W\}$ are pairwise disjoint and $\cup_{i\in W} K_i = \cup_{i\in P} K_i =\cup_{i\in T_1} K_i$.

\noindent\textbf{\underline{Case 2}: There exists an intersecting pair $i, i'\in P$ such that both $K_{i} \nsubseteq K_{i'}$ and $K_{i'} \nsubseteq K_{i}$}.

First note that in this case, we necessarily have $d_i\neq d_{i'}$.
We may assume that $i=1$, $i'=2$ and $\displaystyle\frac{l.c.m.(d_1, d_2)}{d_1} \geq 3$. Let $j_1\in K_1\cap K_2$, $j_2\triangleq (j_1+d_1) \bmod (l.c.m.(d_1, d_2))$ and $j_3\triangleq (j_2+ d_1) \bmod (l.c.m.(d_1, d_2))$ where $0\leq j_2,j_3 < l.c.m.(d_1, d_2)$.
Then $j_2, j_3\in K_1\setminus K_2$. Furthermore, there is also a $j_4\in K_2 \setminus K_1$. Applying Lemma \ref{induct} to $j_2, j_3, j_4$, we deduce that there exist $l_1, l_2, l_3\in \{3,4,5\}$ such that
\begin{align}
j_2\in K_1 \cap K_{l_1}, \quad K_{l_1} \cap K_2 =\emptyset, \label{disjointness_K_2} \\
j_3\in K_1 \cap K_{l_2}, \quad K_{l_2} \cap K_2 =\emptyset, \notag \\
j_4\in K_2 \cap K_{l_3}, \quad K_{l_3} \cap K_1 =\emptyset. \label{K_1_K_5}
\end{align}
Note that we necessarily have $l_3\neq l_1$ and $l_3\neq l_2$. We now claim that $l_1\neq l_2$. To see this, suppose that $l_1=l_2=l$. Then $j_2\in K_l, j_3\in K_l$. Since $ j_3=(j_2+d_1) \bmod (l.c.m.(d_1, d_2))$ and $j_2\in K_1, j_3\in K_1$, we have $K_l\subset K_1$. Hence, $j_1\in K_1 \cap K_2 \cap K_l$, contradicting the fact that $K_2\cap K_l=\emptyset$. (See Figure \ref{figure3}, where for any $r,s$, a $\bullet$ ({\em resp. a $\times$}) on the $(r,s)$ position means that  $j_r\in K_s$ ({\em resp.} $j_r\notin K_s$)).


\begin{figure}[H]
$$
\begin{array}{cccccc}

j_4 & \times & \bullet & \times \\
j_3 & \bullet & \times & \bullet  \\

j_2 & \bullet & \times & \bullet\\

j_1 & \bullet & \bullet & \times  \\
\hline
(i,j) & K_1 & K_2 & K_{l} & K_4 & K_5\\
\end{array}
$$
\caption{Relationship between $K_1, K_2$ and $K_l$ if $l_1=l_2=l$.}
\label{figure3}
\end{figure}
\medskip


Hence, $l_1, l_2$ and $l_3$ are distinct. We may assume that $l_1=3, l_2=4$ and $l_3=5$. The current relation between $\{K_1, K_2 ,K_3, K_4,K_5\}$ is given in Figure \ref{figure4}, where $?$ means that whether this position is $\bullet$ or $\times$ is unknown up to now.
\begin{figure}[H] \label{figure4}
$$
\begin{array}{cccccc}

j_4 & \times & \bullet & \times & \times & \bullet \\

j_3 & \bullet & \times & ?_1 & \bullet & \times  \\

j_2 & \bullet & \times & \bullet & ?_2 & \times \\

j_1 & \bullet & \bullet & \times & \times & \times \\
\hline
(i,j) & K_1 & K_2 & K_3 & K_4 & K_5\\

\end{array}
$$
\caption{Relationship between $K_1, K_2 ,K_3, K_4, K_5$ with some unknowns.}
\end{figure}

We then claim that $j_3\notin K_3$ and $j_2\notin K_4$ (i.e., $?_1=?_2=\times$ in Figure \ref{figure4}).
To verify this claim, assume WLOG that $j_3\in K_3$. Then $j_2\in K_1\cap K_3$ and $j_3\in K_1 \cap K_3$. Noting that $j_3-j_2=d_1  \bmod (l.c.m.(d_1, d_2))$, we must have $K_3\supset K_1$, which contradicts the fact that $j_1\in K_1 \setminus K_3$. Hence, $j_3\notin K_3$. A similar argument shows that $j_2\notin K_4$, proving the claim.
\medskip



Now the relationship between $\{K_1, K_2, K_3, K_4, K_5\}$ is partially characterized in Figure \ref{figure5}.
\begin{figure}[H]
$$
\begin{array}{cccccc}

j_4 & \times & \bullet & \times & \times & \bullet \\

j_3 & \bullet & \times & \times & \bullet & \times  \\

j_2 & \bullet & \times & \bullet & \times & \times \\

j_1 & \bullet & \bullet & \times & \times & \times \\
\hline
(i,j) & K_1 & K_2 & K_3 & K_4 & K_5\\

\end{array}
$$
\caption{Relationship between $K_1, K_2 ,K_3, K_4, K_5$.}
\label{figure5}
\end{figure}

We then claim that $K_3\cap K_4=\emptyset$. To see this, suppose to the contrary that there is a $j_5\in K_3\cap K_4$. Since $j_2\in K_1\cap K_3$, $j_3\in K_1\cap K_4$, we infer from Lemma~\ref{Chinese_remainder} that there is a $j_6\in K_1\cap K_3 \cap K_4$, contradicting Lemma~\ref{Combined_properties} $(c)$.

Now let $E_1:=\{1,5\}, E_2:=\{2,3,4\}$. Since $\{K_i: i\in E_1\}$ and $\{K_i: i\in E_2\}$ are both pairwise disjoint, the desired result follows from Part (c).
\end{proof}

\begin{rem}
When $\vert P \vert \le 4$, by carefully going through a similar argument as in the proof of $(d)$, one can show that for any $i\neq j\in P$,  $K_i \cap K_j=\emptyset$ 
or $K_i\subset K_j$ or $K_j\subset K_i$.
\end{rem}

\section{Standard Factor Codes Defined on Another Class of Graphs} \label{two_cycles}
We believe that our approach in the proof of Theorem \ref{spoke_three_eq} also works for more general graphs. Note that for a graph $G$ with one (regular) spoke, Theorem \ref{spoke_three_eq} implies that P2 always holds. In this section, as an example we show that P2 also holds for standard factor codes defined on a more general class of graphs. To be specific, let $G$ be a graph which consists of a central state $B$, a simple path $\gamma^+$ from $B$ to $B'\neq B$, a simple path $\gamma^-$ from $B'$ to $B$, and two simple cycles $C_1$ and $C_2$ including $B'$ s.t.
\begin{enumerate}
\item[(a)] $\vert C_i\vert>0$ for $i=1,2$;
\item[(b)] $\gamma^+$ and $\gamma^-$ only intersect at $B$ and $B'$;
\item[(c)] $\gamma^+, \gamma^-, C_1$ and $C_2$ share the vertex $B'$ and there is no other common vertex among $\gamma^+, \gamma^-, C_1$ and $C_2$.
\end{enumerate}
Here, we implicitly assume that $\gamma^+\neq \emptyset$ and $\gamma^-\neq \emptyset$.

Just as in Section \ref{Sec_Spoke}, a standard factor code $\phi$ on $G$ is induced by a one-block map $\Phi: \mathcal{V}(G) \rightarrow \{0,1\}$ that maps the central state $B$ to $1$ and any other vertex to $0$.

Let $Y$ be the image of $\phi$. We have the following.

\begin{pr} \label{two_cycle_one_spoke}
Let $G$ be the graph defined above and $\phi$ be the standard factor code on $G$. Then, there is an SFT $Z\subset \widehat{X_{G}}$ s.t.
$\phi\vert_Z$ is finite-to-one and onto $Y$.
\end{pr}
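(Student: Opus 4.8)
The plan is to adapt the proof of Theorem~\ref{spoke_three_eq}: exhibit an explicit sub-SFT $Z\subset\widehat{X_G}$, cut out by forbidding finitely many blocks, on which $\phi$ is onto $Y$ and has no point diamond. Write $d_i=|C_i|$, $m_0=|\gamma^+|+|\gamma^-|-1$ and $g=\gcd(d_1,d_2)$. Exactly as for spoke graphs, every first-return loop of $G$ at $B$ is of the form $\gamma^+w\gamma^-$ with $w$ a concatenation of complete traversals of $C_1$ and $C_2$, of length $m_0+1+ad_1+bd_2$ where $a,b$ count the $C_1$- and $C_2$-traversals in $w$; hence $Y=X(S)$ with $S=m_0+\langle d_1,d_2\rangle$, where $\langle d_1,d_2\rangle=\{ad_1+bd_2:a,b\ge 0\}$. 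The arithmetic core, which I would record as a lemma, is that $(a,b)\mapsto ad_1+bd_2$ is a bijection from $\mathbb{Z}_{\ge0}\times\{0,1,\dots,d_1/g-1\}$ onto $\langle d_1,d_2\rangle$: if $ad_1+bd_2=a'd_1+b'd_2$ with $0\le b,b'<d_1/g$ then $(d_1/g)\mid(b-b')$, so $b=b'$ and $a=a'$; and given $s=a_0d_1+b_0d_2$, reducing $b_0$ modulo $d_1/g$ trades each batch of $d_1/g$ copies of $C_2$ for $d_2/g$ copies of $C_1$.

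Let $Z\subset\widehat{X_G}$ be the subshift obtained by forbidding (i) a complete $C_2$-traversal immediately followed by a complete $C_1$-traversal, and (ii) $d_1/g$ consecutive complete $C_2$-traversals; each is a single block of $\widehat{X_G}$ (after at most a higher-block recoding so that ``having just completed $C_i$'' appears in the alphabet), so $Z$ is an SFT, and it is irreducible since from any allowed word one can extend to $B$ and append $\gamma^+\gamma^-$ (using $m_0\in S$). The first-return loops of $G$ surviving in $Z$ are then exactly the $\gamma^+C_1^aC_2^b\gamma^-$ with $a\ge0$, $0\le b<d_1/g$, of length $m_0+1+ad_1+bd_2$; by the surjectivity half of the lemma these realize every gap of $S$, consecutive loops in $Z$ may be chosen independently, and the all-zero point $V((C_1)^\infty)$ and the one-sided tails lie in $Z$, so $\phi(Z)=X(S)=Y$.

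For finiteness, by [LM95, Theorem~8.1.16] and irreducibility it suffices to rule out point diamonds. If $x,x'\in Z$ have $\phi(x)=\phi(x')$, they carry $B$ in exactly the same coordinates; on each window strictly between two consecutive such coordinates both are first-return loops of $Z$ of equal length, hence equal by the injectivity half of the lemma; and the $B$-free remainder of $x$ (two-sided if $x$ has no $B$, otherwise its one-sided tails) is, up to a shift, a power of $C_1$ possibly preceded or followed by fewer than $d_1/g$ traversals of $C_2$, so if $x$ and $x'$ agree off a finite set their remainders must share the same phase and the same trailing $C_2$-count; thus $x=x'$, and $\phi|_Z$ has no point diamond. (Since $B$ is the unique $\Phi$-preimage of $1$, the standard argument even gives that $\phi|_Z$ is almost invertible.)

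The bijection lemma is elementary; the genuine work is the last step. The subtlety is that $\phi|_Z$ is truly non-injective — the finite $\sigma$-orbit of $V((C_1)^\infty)$ all maps to $0^\infty$, so no conjugacy is possible — and the diamond analysis must carefully separate the windows between $B$'s from the one-sided $B$-free tails and pin down exactly which points of $Z$ are $B$-free; this bookkeeping, rather than any single idea, is the main obstacle. An alternative, parallel to the proof of Theorem~\ref{spoke_three_eq}, is to take $H$ to be the spoke graph whose regular spokes $V_0,\dots,V_{d_1/g-1}$ all have cycle length $d_1$, with $V_b$ having arm lengths summing to $m_0+bd_2+1$; then $D=d_1$ and each $K_{V_b}=\{(m_0+bd_2)\bmod d_1\}$, and the lemma says these singletons are pairwise disjoint and cover, so Theorem~\ref{spoke_three_eq} applies to $H$. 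One then maps $H$ into $G$ by a one-block map sending the cycle of $V_b$ to $C_1$ and $\gamma^+_{V_b}$ to $\gamma^+C_2^b$ (and each $\gamma^-_{V_b}$ to $\gamma^-$); checking that this induces a conjugacy onto an SFT inside $\widehat{X_G}$ comes down to the same tail bookkeeping.
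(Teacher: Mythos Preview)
Your argument is correct and rests on the same arithmetic fact as the paper's proof: your bijection lemma is exactly the paper's Lemma~\ref{num_thy_lem}, since $d_1/g=\mathrm{lcm}(d_1,d_2)/d_2$. The executions differ. The paper builds an auxiliary graph $H$ (delete $C_2$; adjoin a long arm $\beta$ that simulates up to $u-1$ consecutive traversals of $C_2$, with shortcut edges back to $B'$), defines a one-block map $\Psi:\mathcal V(H)\to\mathcal V(G)$, checks that the induced $\psi$ is a conjugacy onto its image $Z$, and then verifies that $\phi\circ\psi$ is finite-to-one and onto by explicitly counting preimages of $10^k1$, $10^\infty$, $0^\infty 1$, and $0^\infty$. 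You instead carve $Z$ out of $\widehat{X_G}$ directly by forbidding two blocks and rule out point diamonds. The resulting SFTs are not even the same---the paper's first-return loops are $\gamma^+C_2^bC_1^a\gamma^-$, yours are $\gamma^+C_1^aC_2^b\gamma^-$---but both work by the same uniqueness lemma. Your route is more economical (no auxiliary graph), at the cost of the tail bookkeeping you flag; the paper's route, which is essentially your ``alternative,'' makes the SFT structure of $Z$ and the finiteness bound transparent from the start via the graph $H$.

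Two small corrections that do not affect correctness. First, the remark about higher-block recoding is unnecessary: forbidding any finite list of blocks (here the $3$-block $f_{d_2}B'c_1$ and the block $V((C_2)^u)$ of length $ud_2+1$) already gives an SFT inside $\widehat{X_G}$. Second, your description of the $B$-free tails is slightly off: under your constraints the forward tail after a last $B$ is necessarily $V(\gamma^+ C_1^\infty)$ with no $C_2$'s at all, and only the backward tail can be $V(C_1^\infty C_2^b\gamma^-)$ with $0\le b<u$; the two-sided $B$-free points are exactly the $d_1$ shifts of $V((C_1)^\infty)$. With this in hand, the no-diamond argument for the tails reduces to the observation that if $b\ne b'$ then $(b-b')d_2$ is a nonzero multiple of $d_2$ of absolute value below $\mathrm{lcm}(d_1,d_2)$, hence not a multiple of $d_1$, so the two $C_1^\infty$ portions are out of phase at infinitely many coordinates.
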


We need the following lemma.

\begin{lem} \label{num_thy_lem}
Suppose $d_1, d_2$ are two positive integers. Let
\begin{align*}
E\:&=\{n\in \mathbb{Z}_{\geq 0}: n=s \cdot d_1+t \cdot d_2, \quad s,t\in \mathbb{Z}_{\geq 0}\}, \\
u:&=\frac{l.c.m.(d_1, d_2)}{d_2}.
\end{align*}
Then for any $n\in E$, the equation
\begin{align} \label{cons_eqn}
x\cdot d_1+ y\cdot d_2=n \qquad
s.t. \quad x,y\in \mathbb{Z}_{\geq 0}, 0\leq y< u
\end{align}
has a unique solution.
\end{lem}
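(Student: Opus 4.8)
The plan is to reduce the statement to the elementary arithmetic of the pair $u := \operatorname{lcm}(d_1,d_2)/d_2$ and $v := \operatorname{lcm}(d_1,d_2)/d_1$. Setting $g := \gcd(d_1,d_2)$, I would first record that $u = d_1/g$ and $v = d_2/g$, so that $\gcd(u,v) = 1$, and that $u\,d_2 = v\,d_1 = \operatorname{lcm}(d_1,d_2)$. This last identity is the only tool needed: it says that trading $u$ copies of the summand $d_2$ for $v$ copies of $d_1$ (or vice versa) turns one representation $n = x d_1 + y d_2$ into another representation of the same $n$.

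For existence, I would start from an arbitrary representation $n = s d_1 + t d_2$ with $s, t \in \mathbb{Z}_{\ge 0}$, which is available precisely because $n \in E$. Whenever $t \ge u$, I pass to $n = (s + v) d_1 + (t - u) d_2$; the coefficients stay nonnegative and $t$ strictly decreases, so after finitely many such steps I reach a representation with $0 \le y < u$, as required.

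For uniqueness, suppose $n = x_1 d_1 + y_1 d_2 = x_2 d_1 + y_2 d_2$ with $0 \le y_1, y_2 < u$. Subtracting yields $(x_1 - x_2) d_1 = (y_2 - y_1) d_2$, and dividing through by $g$ gives $(x_1 - x_2) u = (y_2 - y_1) v$. Since $\gcd(u,v) = 1$, $u$ divides $y_2 - y_1$; but $|y_2 - y_1| < u$ forces $y_1 = y_2$, whence $x_1 = x_2$.

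I do not expect any genuine obstacle here; the one point to watch is that in the existence step the coefficient of $d_1$ must be allowed to grow while the coefficient of $d_2$ shrinks, so that nonnegativity is preserved along the whole descent — and this is exactly what the identity $u\,d_2 = v\,d_1$ provides. I note also that once $y$ is fixed, $x$ is automatically determined by $x = (n - y d_2)/d_1$, so no separate constraint on $x$ is needed.
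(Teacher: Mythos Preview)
Your proof is correct and follows essentially the same approach as the paper: both arguments produce a representation with $0\le y<u$ by repeatedly trading $u$ copies of $d_2$ for $v=\operatorname{lcm}(d_1,d_2)/d_1$ copies of $d_1$ (the paper does this in one step via division with remainder), and both establish uniqueness from $(x_1-x_2)d_1=(y_2-y_1)d_2$ together with the bound $|y_2-y_1|<u$. The only cosmetic difference is that you divide by $g$ and invoke $\gcd(u,v)=1$, whereas the paper bounds $(y_2-y_1)d_2$ against $\operatorname{lcm}(d_1,d_2)$ directly.
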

\begin{proof}
We first show that (\ref{cons_eqn}) has a solution. Suppose $n=s\cdot d_1+t\cdot d_2$ for some $s,t\in \mathbb{Z}_{\geq 0}$. If $t< u$, then $x=s, y=t$ is a solution to (\ref{cons_eqn}); otherwise, if $t\geq u$, then there exist nonnegative integers $k,r$ with $0\leq r< u$ such that $t=k u +r$. Hence, we have
\begin{align}
n&= s \cdot d_1 + t \cdot d_2 \notag \\
&=s\cdot d_1+(ku+r) d_2 \notag \\
&= s \cdot d_1 + k \cdot (l.c.m.(d_1 ,d_2))+r d_2 \notag \\
&= \left(s+ k\cdot \frac{l.c.m. (d_1, d_2)}{d_1}\right) d_1 + r d_2 \label{new_ex_1},
\end{align}
Since $d_1 \mid l.c.m. (d_1, d_2)$ and $0\leq r<u$, we conclude from (\ref{new_ex_1}) that $x=s+ k\cdot \frac{l.c.m. (d_1, d_2)}{d_1}, y=r$ is a solution to (\ref{cons_eqn}).

We now prove that (\ref{cons_eqn}) has no more than one solution. Suppose to the contrary that there exist two different pairs of integers $(x_1, y_1)$ and $(x_2, y_2)$ that satisfy (\ref{cons_eqn}) and WLOG $y_1<y_2$. 
Now we have $
x_1 \cdot d_1 + y_1 \cdot d_2 = x_2 \cdot d_1 + y_2 \cdot d_2=n,
$
which implies
$ (y_2-y_1) d_2 = (x_1-x_2) d_1$. Hence, $d_1 \mid (y_2-y_1) d_2$ and it follows that
\begin{align} \label{con_eqn_1}
(y_2-y_1) d_2\geq l.c.m. (d_1, d_2)
\end{align}
since $d_2 \mid (y_2-y_1) d_2$. On the other hand, recalling that $y_1, y_2<u$, we have $y_2-y_1<u$ and
$$
(y_2-y_1) d_2< u\cdot d_2 = \frac{l.c.m.(d_1, d_2)}{d_2}\cdot d_2 = l.c.m. (d_1, d_2),
$$
contradicting (\ref{con_eqn_1}).
\end{proof}

\noindent{\bf Proof of Proposition {\ref{two_cycle_one_spoke}}:}
We first note that the image $Y$ of $\phi$ is a gap shift with gap set
 \begin{align*}
 S:= \{n \in \mathbb{Z}_{\geq 0}: n=m+s \cdot d_1 +t\cdot d_2 \mbox{ with }  s,t \in \mathbb{Z}_{\geq 0}\},
\end{align*}
 where $m = |\gamma^+| + |\gamma^-|-1$ and $d_i = |C_i|$ for $i=1,2$.

Let $u:= l.c.m.(d_1, d_2)/d_2$ and denote the vertices on the cycle $C_2$ and path $\gamma^+$ by
\begin{align*}
V(C_2)=f_1 f_2\cdots f_{d_2}, \\
V(\gamma^+)=B g_1 g_2 \cdots g_{\vert \gamma^+\vert-1} f_1,
\end{align*}
where $f_1=B'$.
We then construct a new graph $H$ from $G$ through the following steps:
\begin{enumerate}
\item[$(A)$] Let $H$ be the graph obtained from $G$ by deleting the cycle $C_2$;
\item[$(B)$] If $u> 1$, add to $H$ a simple path $\beta$ from $B$ to $B'$ such that
\begin{align*}
\vert\beta \vert &= \vert\gamma^+ \vert + (u-1) d_2,\\
V(\beta)&=Bg_1' g_2' \cdots g_{\vert \gamma^+\vert-1}' f_1^{(1)} f_2^{(1)}\cdots f_{d_2}^{(1)} \cdots \cdots f_1^{(u-1)} f_2^{(u-1)}\cdots f_{d_2}^{(u-1)} B';
\end{align*}
\item[$(C)$] For each $1\leq j\leq u-2$, add to $H$ an edge from $f_{d_2}^{(j)}$ to $B'$.
\end{enumerate}
See Figure \ref{G_and_H_single_spoke} for an example of $G$ and $H$ when $m=3$, $\vert C_1 \vert =4$ and $\vert C_2 \vert =3$.

\begin{figure}[]
\begin{center}
\begin{tikzpicture}[scale=0.6]
\draw [opacity=0] (0,1) grid (27,15);
\node [circle, draw, thick, inner sep=1.8pt] (GB) at (5,4) {$B$};
\node [circle, draw, thick, inner sep=1.8pt] (Gg1) at (4,7) {$g_1$};
\node [circle, draw, thick, inner sep=1.5pt] (Gf1) at (5,10) {$f_1$};
\node [circle, draw, thick, inner sep=3.5pt] (Ga) at (6,8) {$a$};
\node [circle, draw, thick, inner sep=3.0pt] (Gb) at (6,6) {$b$};
\node [circle, draw, thick, inner sep=3.0pt] (Gc) at (4,13) {$c$};
\node [circle, draw, thick, inner sep=3.0pt] (Gd) at (1,12) {$d$};
\node [circle, draw, thick, inner sep=3.0pt] (Ge) at (2,9) {$e$};
\node [circle, draw, thick, inner sep=3.0pt] (Gf2) at (9,10) {$f_2$};
\node [circle, draw, thick, inner sep=3.0pt] (Gf3) at (7,13) {$f_3$};
\draw [-stealth, black, thick] (GB) to [out=120, in=280] (Gg1);
\draw [-stealth, black, thick] (Gg1) to [out=90, in=240] (Gf1);
\draw [-stealth, black, thick] (Gf1) to [out=300, in=110] (Ga);
\draw [-stealth, black, thick] (Ga) to [out=280, in=80] (Gb);
\draw [-stealth, black, thick] (Gb) to [out=260, in=50] (GB);
\draw [-stealth, black, thick] (Gf1) to (Gc);
\draw [-stealth, black, thick] (Gc) to (Gd);
\draw [-stealth, black, thick] (Gd) to (Ge);
\draw [-stealth, black, thick] (Ge) to (Gf1);
\draw [-stealth, black, thick] (Gf1) to (Gf2);
\draw [-stealth, black, thick] (Gf2) to (Gf3);
\draw [-stealth, black, thick] (Gf3) to (Gf1);

\node [circle, draw, thick, inner sep=1.8pt] (HB) at (17,4) {$B$};
\node [circle, draw, thick, inner sep=1.8pt] (Hg1) at (16,7) {$g_1$};
\node [circle, draw, thick, inner sep=1.5pt] (Hf1) at (17,10) {$f_1$};
\node [circle, draw, thick, inner sep=3.5pt] (Ha) at (18,8) {$a$};
\node [circle, draw, thick, inner sep=3.0pt] (Hb) at (18,6) {$b$};
\node [circle, draw, thick, inner sep=3.0pt] (Hc) at (16,13) {$c$};
\node [circle, draw, thick, inner sep=3.0pt] (Hd) at (13,12) {$d$};
\node [circle, draw, thick, inner sep=3.0pt] (He) at (14,9) {$e$};
\node [circle, draw, thick, inner sep=0.5pt] (Hg11) at (19.5, 4.5) {$g_1^{(1)}$};
\node [circle, draw, thick, inner sep=0.5pt] (Hf11) at (22.2, 5.3) {$f_1^{(1)}$};
\node [circle, draw, thick, inner sep=0.5pt] (Hf21) at (24.5, 6.5) {$f_2^{(1)}$};
\node [circle, draw, thick, inner sep=0.5pt] (Hf31) at (25.5, 9) {$f_3^{(1)}$};
\node [circle, draw, thick, inner sep=0.5pt] (Hf12) at (25, 11.5) {$f_1^{(2)}$};
\node [circle, draw, thick, inner sep=0.5pt] (Hf22) at (23, 13.5) {$f_2^{(2)}$};
\node [circle, draw, thick, inner sep=0.5pt] (Hf32) at (20, 13.3) {$f_3^{(2)}$};
\node [circle, draw, thick, inner sep=0.5pt] (Hf13) at (19.5, 9.7) {$f_1^{(3)}$};
\node [circle, draw, thick, inner sep=0.5pt] (Hf23) at (22, 8.5) {$f_2^{(3)}$};
\node [circle, draw, thick, inner sep=0.5pt] (Hf33) at (22.5, 11.5) {$f_3^{(3)}$};
\draw [-stealth, black, thick] (HB) to [out=120, in=280] (Hg1);
\draw [-stealth, black, thick] (Hg1) to [out=90, in=240] (Hf1);
\draw [-stealth, black, thick] (Hf1) to [out=300, in=110] (Ha);
\draw [-stealth, black, thick] (Ha) to [out=280, in=80] (Hb);
\draw [-stealth, black, thick] (Hb) to [out=260, in=50] (HB);
\draw [-stealth, black, thick] (Hf1) to (Hc);
\draw [-stealth, black, thick] (Hc) to (Hd);
\draw [-stealth, black, thick] (Hd) to (He);
\draw [-stealth, black, thick] (He) to (Hf1);
\draw [-stealth, black, thick] (HB) to [out=0, in=200] (Hg11);
\draw [-stealth, black, thick] (Hg11) to [out=0, in=210] (Hf11);
\draw [-stealth, black, thick] (Hf11) to [out=10, in=220] (Hf21);
\draw [-stealth, black, thick] (Hf21) to [out=45, in=270] (Hf31);
\draw [-stealth, black, thick] (Hf31) to [out=90, in=290] (Hf12);
\draw [-stealth, black, thick] (Hf12) to [out=120, in=330] (Hf22);
\draw [-stealth, black, thick] (Hf22) to [out=170, in=20] (Hf32);
\draw [-stealth, black, thick] (Hf32) to [out=245, in=95] (Hf13);
\draw [-stealth, black, thick] (Hf13) to [out=320, in=170] (Hf23);
\draw [-stealth, black, thick] (Hf23) to [out=50, in=280] (Hf33);
\draw [-stealth, black, thick] (Hf33) to [out=160, in=60] (Hf1);
\draw [-stealth, black, thick] (Hf31) to [out=160, in=30] (Hf1);
\draw [-stealth, black, thick] (Hf32) to [out=190, in=80] (Hf1);

\node at (5, 1) [coordinate, draw, fill=black, label=above: The graph $G$] {};
\node at (19, 1) [coordinate, draw, fill=black, label=above: The graph $H$] {};

\end{tikzpicture}
\end{center}
\caption{An example of $G$ and $H$ with $m=3$, $\vert C_1 \vert=4, \vert C_2 \vert=3$.}
\label{G_and_H_single_spoke}
\end{figure}

We now construct a sliding block code $\psi:X_H \rightarrow X_G$ such that
$\psi$ is one-to-one and $\phi\circ\psi$ is finite-to-one and onto.  It will follow that $Z:= \psi(X_H$) is an SFT and $\phi|_Z$ is finite-to-one and onto.

Let $\Psi:  \mathcal{V}(H)\rightarrow \mathcal{V}(G)$ be the 1-block map defined by
\begin{enumerate}
\item[(a)] For any vertex $v$ on $\gamma^+, \gamma^-$ or $C_1$, $\Psi(v)=v$;
\item[(b)] For any $1\leq i\leq \vert \gamma^+\vert-1$, $\Psi(g_i')=g_i$; for any $1\leq j\leq d_2$ and $1\leq k\leq u-1$, $\Psi(f_{j}^{(k)})=f_j$.
\end{enumerate}
Let $\psi$ be the sliding block code induced by $\Psi$.
To show that $\psi$ is one-to-one it suffices to show that there exists some $M$ such that whenever $\psi(x) = y$, then $x_0$ can be uniquely determined from $y_{[-M,M]}$.  We show this  by considering the following possibilities for $y_0$:
\begin{enumerate}
\item[(1)] If $y_0$ is on $\gamma_-$ or  $C_1$ and $y_0\neq B'$,  then $x_0 = y_0$;
\item[(2)] If $y_0 = g_i$ for some $i$, let
$$N_1: =\min\{l\geq 0: y_l=g_{\vert \gamma^+\vert-1}\}\leq \vert \gamma^+ \vert-2.$$
Then $x_0 = g_i'$ if $y_{N_1+2}=f_2$ and $x_0=g_i$ otherwise;
\item[(3)] If $y_0 = f_j$ for some $j$, let
$$
N_2:= \min \{ \ell \ge 0: y_{-\ell} =  g_{|\gamma^+|}\} \leq (u-1)d_2.
$$
If $y_1\neq f_l$ for any $1\leq l\leq d_2$, then $x_0=f_1$; otherwise, $x_0  = f_j^{(k)}$ where $k = \lceil N_2/d_2 \rceil$.
\end{enumerate}
This shows that $\psi$ meets the criterion above to be one-to-one
with $M:= \max\{\vert \gamma^+ \vert, (u-1)d_2\}$.

Now we show that
$\phi \circ \psi: \widehat{X_H} \rightarrow Y$ is finite-to-one and onto. Note that by definition $\phi\circ \psi$ maps the central state $B$ of $H$ to $1$ and maps any other vertex to $0$.

To this end, 
first observe that any $k\in S$ must satisfy $k=m+s\cdot d_1 + t\cdot d_2$ for some $s, t\in \mathbb{Z}_{\geq 0}$. Noting from Lemma \ref{num_thy_lem} that there is a unique pair $(x, y)$ with $x,y\in \mathbb{Z}_{\geq 0}$ and $0\leq y<u$ such that $s\cdot d_1 + t \cdot d_2 = x \cdot d_1 + y \cdot  d_2$, we conclude that
\begin{align*}
(\Phi\circ\Psi)^{-1}(10^{k} 1)=
\begin{cases}\smallskip
V(\gamma^+(C_1)^x \gamma^-) \quad &\mbox{if $y=0$},\\
V(\beta(C_1)^x \gamma^- ) \quad &\mbox{if $0<y<u$.}
\end{cases}
\end{align*}
In particular, any block of the form $10^{k} 1$ with $k\in S$ has a unique preimage under $\Phi\circ \Psi$.
Similarly, one can show that
$|(\Phi\circ\Psi)^{-1}(0^\infty 1)| = 1$,
$|(\Phi\circ\Psi)^{-1}(1 0^\infty )| = u$,
and $|(\Phi\circ\Psi)^{-1}(0^\infty )| = d_1$.
Since each element $y \in Y$ is a concatenation of blocks of
the form $10^k, 0^\infty 1, 1 0^\infty$ and $0^\infty$ with $k\in S$,
$$
1 \le |(\phi \circ \psi)^{-1}(y)|  \le  \max(u, d_1).
$$
So  $\phi \circ \psi$ is finite-to-one and onto $Y$.  \qed

\begin{rem}
The subshift of finite type $Z$ is not unique: indeed, by interchanging the role of $C_1$ and $C_2$,
we can construct another SFT $Z'\subset \widehat{X_G}$ such that $\phi\vert_{Z'}$ is finite-to-one and onto $Y$.
\end{rem}


%



\end{document}